\numberwithin{equation}{section}
\newtheorem{thm}{Theorem}
\newtheorem{lem}[thm]{Lemma}
\newtheorem{defi}[thm]{Definition}
\newlength{\itemeqBlockIndent}
\newlength{\itemeqIndent}
\NewDocumentCommand{\itemeq}{O{\itemeqIndent} O{1.2ex} m m}{%
  \par\vspace*{#2}\noindent
  \hspace*{\itemeqBlockIndent}% ← новый левый отступ всего пункта
  \makebox[0pt][l]{\textup{(#3)}}%
  \hspace*{#1}$\displaystyle #4$\par
}
\NewDocumentCommand{\itemeqnext}{O{\itemeqIndent} O{.8ex} m}{%
  \vspace*{#2}\noindent
  \hspace*{\itemeqBlockIndent}% ← тот же левый отступ для продолжения
  \hspace*{#1}$\displaystyle #3$\par
}
\begin{document}

\title[Fuglede theorem]{Fuglede theorem for symmetric spaces of $\tau$-measurable operators}

\author{Denis Potapov}

\address{School of Mathematics and Statistics, UNSW\\
Kensigston 2052, NSW Australia}

\email{d.potapov@unsw.edu.au}

\author{Fedor Sukochev}

\address{School of Mathematics and Statistics, UNSW\\
Kensigston 2052, NSW Australia}

\email{f.sukochev@unsw.edu.au}

\author{Anna Tomskova}

\address{Inha University in Tashkent\\
Tashkent,  Uzbekistan and V.I. Romanovsky Institute of Mathematics of the Academy of Sciences of
Uzbekistan}

\email{a.tomskova@inha.uz}

\author{Dmitriy~Zanin}

\address{School of Mathematics and Statistics, UNSW\\
Kensigston 2052, NSW Australia}

\email{d.zanin@unsw.edu.au}

\keywords{Noncommutative integration, commutant, Fuglede theorem, interpolation of Banach spaces}

\subjclass[2020]{Primary 47B10, 47L20; Secondary 46E30, 47A30.}

\begin{abstract}
We extend the classical Fuglede commutativity theorem to the full scale of symmetrically normed operator ideals.
Our main result provides a complete characterization: a symmetric ideal or symmetric operator space of $\tau$-measurable operators satisfies the Fuglede theorem if and only if its commutative core has non-trivial Boyd indices, or equivalently, if it is an interpolation space in the scale of $L_p$-spaces for $1<p<\infty$.
This criterion subsumes all previously known cases, including Lorentz and Schatten classes.
\end{abstract}
\maketitle

\section{Introduction}

Fuglede's theorem is a classical result in operator theory, stating that if \( A \) and \( B \) are normal operators on a Hilbert space \( H \), and a bounded operator \( T \) satisfies
\[
AT = TB,
\]
then it follows that
\[
A^*T = T B^*.
\]
This result, originally established for the case $A=B$ by Fuglede in 1950 \cite{Fug}, and strengthened by Putnam to the two-operator setting, where both
$A,B$ are normal \cite{Put}, plays a foundational role in understanding the interplay between operator commutation and adjoint structures. In particular, for normal operators, the intertwining relation passes from an operator to its adjoint.

Rewriting the condition in terms of commutators, Fuglede's theorem asserts:
\[
[A, T] = 0 \quad \Rightarrow \quad [A^*, T] = 0.
\]
This naturally raises the question of what can be said when the commutator does not vanish, but is merely small in norm. That is, if \( [A, T] \) is small in some operator ideal norm, can one control \( [A^*, T] \) accordingly?

This leads to the study of \emph{quantitative} versions of the Fuglede-Putnam theorem, with inequalities of the form
\[
\|[A^*,T]\| \leq C \|[A, T]\|,
\]
where \( A \) and \( T \) are bounded operators, and \( C \) is a constant depending on the context. Such estimates have attracted considerable attention (see e.g. \cite{ST, JK, LSZ-book, Farforovskaya1994} and the recent work \cite{KPSS2}). These norm-type commutator estimates have deep connections with perturbation theory, operator ideals, and noncommutative geometry. In perturbation theory, such estimates appear naturally in the study of stability of operator spectra under small perturbations (see e.g. \cite{Kato}). Their relevance to operator ideals is investigated in the context of commutators in normed ideals (see e.g. \cite{KPSS}), {while in noncommutative geometry they are central to Connes’ spectral approach, where commutator bounds play a key role in defining geometric structures on noncommutative spaces (see e.g. \cite{Connes}).} 

Recent developments have extended the Fuglede--Putnam framework to a much broader class of \emph{elementary operators} of the form
$
\Delta(X) = \sum_i A_i X B_i,
$
where the classical argument no longer applies directly. In particular, Shulman and Turowska (see {\color{blue}\cite{ShulmanTurowska2006}}) analyzed this general setting and identified geometric conditions, such as bounded essential dimension of the left coefficient family, under which analogues of the Fuglede--Putnam theorem remain valid {(see also \cite{ShulmanTurowska})}.

In the setting of this paper, let \( H \) be a separable Hilbert space, and let \( {B}(H) \) denote the algebra of all bounded linear operators on \( H \).
A  \emph{(quasi-)Banach symmetric ideal} is, in general, a two-sided (quasi-)Banach ideal \( \mathcal{I} \subset {B}({H}) \) equipped with a unitary invariant (quasi-)norm \( \|\cdot\|_{\mathcal{I}} \) satisfying the ideal property.
A (quasi-)Banach symmetric ideals can also be defined via symmetric norms on sequences by associating to each compact operator its sequence of singular values and requiring the norm to depend only on these values in a symmetric way (see e.g. \cite{KS}).

\begin{defi}
A (quasi-)Banach symmetric ideal $\mathcal{I}$ is \textit{Fuglede} if there is $C>0$ such that
\[
\Vert\lbrack A^{\ast},T]\Vert_{\mathcal{I}}\leq
C\ \Vert\lbrack A,T]\Vert_{\mathcal{I}}%
\]
for all normal operators $A$ and all $T\in B(H)$.
\end{defi}

If some (quasi-)Banach symmetric ideal is Fuglede, this simply means that this enhanced form of the Fuglede-Putnam theorem holds for this ideal. {The term ``Fuglede ideal'' was first introduced in \cite[Definition 3.7]{KissinShulman2005}.}

 %In an infinite-dimensional separable Hilbert space, it is known that compact operators form a two-sided ideal in $B(H)$. 
 It was first shown by G. Weiss in 1978 (\cite{Weiss1978}) that the class of Hilbert-Schmidt operators is Fuglede. Later, Abdessemed and Davies \cite{Abdessemed1989} { and Shulman \cite{Shul1996} extended the result to Schatten classes $\mathcal{S}_p$ for $2<p$ and  $1 < p < 2$ respectively.}
However, certain operator ideals, such as the finite-rank operators, $\mathcal{S}_1$, $\mathcal{S}_\infty$, $B(H)$ and quasi-Banach Schatten classes $\mathcal{S}_p$ for $0 < p < 1$, are not Fuglede~\cite{Weiss1981}.

% Fundamental example in this category is the \emph{Schatten class} \( \mathcal{S}_p \), $1 \leq p \leq \infty$ defined as a set of all compact operators \( A \in {B}({H}) \) whose sequence of singular values \( \{s_n(A)\} \) is \( \ell^p \)-summable,
 %, that is
%\[
%\mathcal{S}_p := \left\{ A \in {B}({H}) : \|A\|_p := \left( \sum_{n=1}^\infty s_n(A)^p \right)^{1/p} < \infty \right\}.
%\]
%where, \( s_n(A) \) denotes the \( n \)-th singular value of \( A \).%, i.e., the \( n \)-th largest eigenvalue (counting multiplicities) of the positive operator \( |A| = (A^*A)^{1/2} \).

%In particular,  \( \mathcal{S}_1 \) is called the \emph{trace class}, % with norm \( \|A\|_1 = \operatorname{Tr}|A| \),
 %   \( \mathcal{S}_2 \) is the \emph{Hilbert-Schmidt class} % which is a Hilbert space with the inner product \( \langle A, B \rangle = \operatorname{Tr}(B^*A) \) 
  %  and \( \mathcal{S}_\infty \) is the ideal of \emph{compact operators}.%, with \( \|A\|_\infty = \|A\| \).

It was established in \cite[Theorem 4.6]{KPSS2} that any ideal with non-trivial Boyd indices is Fuglede. One of the results of this work is  that we provide a new, more transparent, proof of this result (see Section \ref{sec_main_results}).

In the same paper \cite{KPSS2}, the authors posed the question of whether a Fuglede ideal can have trivial Boyd indices (see \cite[Problem 4.10]{KPSS2}).
In this manuscript, we provide a negative answer to this question (see Theorem~\ref{answer}).

{ The Fuglede ideals were also investigated in the earlier paper \cite{KissinShulman2005}, where it was shown that any separable symmetric normed ideal with non-trivial Boyd indices is a Fuglede ideal (see Corollary 3.8 therein).
However, that analysis did not include certain Lorentz ideals singled out in Problem 3.9 of that paper.
In this work we completely settle this remaining case in Theorem~\ref{answer}. 

}

In the special case of  { separable and coseparable}  ideals, a similar question was recently treated in~\cite{Xia2024}. 
In that work, Fuglede-type commutation inequalities were established for a broad class of symmetric  ideals, including the Lorentz classes by connecting the boundedness of a certain double operator integral with the validity of the Fuglede property. { In addition, the author constructed a number of non-Fuglede ideals, among them certain weighted ideals, including the Macaev ideal.}
Our main result (Theorem~\ref{answer}) gives a complete characterization of when a symmetric ideal is Fuglede: namely, an ideal has the Fuglede property if and only if it has non-trivial Boyd indices. In the special case of concrete ideals considered in~\cite{Xia2024}, Theorem~1.2 there establishes a Fuglede--type estimate under additional structural assumptions, and the subsequent model cases (Theorems~1.3--1.6 in~\cite{Xia2024}) are all covered as direct corollaries of Theorem~\ref{answer}. Thus the present work subsumes the ideal-theoretic part of \cite{Xia2024} and extends it to the general setting.

{ It is worth mentioning that there is also a notion of \emph{Fuglede stable ideals} discussed at length  in \cite{KPSS2} (introduced in \cite[Definition~5.3]{KissinShulman2005} and termed there \emph{weakly Fuglede ideals}), 
which is more delicate than the standard Fuglede property.  
For example, although both ideals $\mathcal{S}_1$ and $\mathcal{S}_\infty$ are not Fuglede, the ideal 
$\mathcal{S}_\infty$ \emph{is}  Fuglede stable (see \cite[Proposition~5.4]{KissinShulman2005}), whereas $\mathcal{S}_1$ is \emph{not} Fuglede stable (see \cite[Corollary~5.8]{KissinShulman2005}). It is also known that every Fuglede ideal is automatically Fuglede stable.  Moreover, in \cite[Corollary~5.7]{KissinShulman2005} the authors provide conditions under which an ideal is  Fuglede stable if and only if it is Fuglede.  
In view of this result, our Theorem~\ref{answer} yields a corresponding criterion for an ideal to be Fuglede stable as well.

}

In addition, in this paper we introduce a notion of Fuglede symmetric operator space (see Definition \ref{def_FugOpSp}) and show  that a symmetric operator space is Fuglede if and only if its commutative core  has non-trivial Boyd indices (see Theorem \ref{MainThm_OpSp}).
Moreover, our approach works for arbitrary symmetric Banach function or sequence spaces (considered in  \cite{KPS} or \cite{BSh}) without any additional properties.

For the convenience of the reader, the main definition and results of the paper are collected in Section~\ref{sec_main_results}. 
However, all technical proofs are postponed to the end of the paper and divided into three parts: 
the proof of sufficiency in Theorem~\ref{MainThm_OpSp} (see Section~\ref{proof of safficiency}), 
the proof of necessity in the case of infinite factors (see Section~\ref{sec_infinite factors}), 
and the proof of necessity in the case of finite factors (see Section~\ref{sec_II_1}).

\section{Preliminaries}

{
\subsection*{Commutative theory}

Let $\mathbb{C}, \mathbb{R}, \mathbb{N}, \mathbb{Z}$ denote the sets of complex numbers, real numbers, natural numbers (without 0), and integers, respectively. 

%For \(1 \leq p < \infty\), the space \(\ell_p\) consists of complex sequences \(x = (x_n) \subset \mathbb{C}\) such that
%\[
%\ell_p := \left\{ x : \|x\|_p =\sum_{n=0}^\infty |x_n|^p < \infty \right\}.
%\]

The space \(\ell_\infty\) is the set of bounded complex sequences \(x = (x_n)_{n=1}^\infty \subset \mathbb{C}\) such that
\[
\ell_\infty := \big\{ x : \|x\|_\infty = \sup_n |x_n| < \infty \big\}.
\]}
A Banach space $E \subset\ell_{\infty}$ is called a \textit{symmetric Banach sequence space} if
\begin{itemize}
\item[(i)] $E$ is a Banach lattice with respect to pointwise order, i.e., if $x \in E$ and $|y| \le |x|,$ then $y \in E$ and $\|y\|_E \le \|x\|_E$;
\item[(ii)] $E$ is \textit{symmetric} (or rearrangement-invariant \cite{NSZ}): for any $x\in E$ and any permutation $\pi$ of $\mathbb{N}$, the sequence $x\circ\pi$ belongs to $E$, and $\|x\circ\pi\|_E = \|x\|_E.$
\end{itemize}
For more details on symmetric function and sequence spaces, we refer to \cite{BSh, KPS, LT}.
{ For \(1 \leq p < \infty\), the classical example of symmetric Banach sequence spaces, the space \(\ell_p\) consists of complex sequences \(x = (x_n)_{n=1}^\infty \subset \mathbb{C}\) such that
\[
\ell_p := \big\{ x : \|x\|_p =\sum_{n=1}^\infty |x_n|^p < \infty \big\}.
\]
For $n \in \mathbb{N}$ and $1 \le p \le \infty$, we denote by $\ell_p^n$ the space $\mathbb{C}^n$ equipped with the norm $\|\cdot\|_p$ defined above.

For each integer \( n \geq 1 \), define the dilation operators \( D_n\) and \( D_{1/n}\)
on \(\ell_\infty \) by setting respectively for $x=(x_1,x_2,\ldots)\in \ell_\infty$,

\[
D_n(x_1, x_2, x_3, \dots) := (\underbrace{x_1, \dots, x_1}_{n \text{ times}}, \underbrace{x_2, \dots, x_2}_{n \text{ times}}, \dots),
\]
\[
D_{1/n}(x_1, x_2, x_3, \dots) := (x_n, x_{2n}, x_{3n}, \dots).
\]
The \textit{Boyd indices} of a symmetric Banach sequence space \( E  \) are defined by \[
\alpha_E = \liminf_{n \geq 2} \frac{\log\|D_{1/n}\|_{E \to E} }{\log(1/n) } = \lim_{n \to \infty} \frac{\log\|D_{1/n}\|_{E \to E} }{\log(1/n) }.
\]
\[
\beta_E = \limsup_{n \geq 2} \frac{\log  \|D_n\|_{E \to E}}{\log n}= \lim_{n \to \infty}\frac{\log  \|D_n\|_{E \to E}}{\log n},
\]
 In general, $0\le \alpha_E\le \beta_E\le 1$. We say that the Boyd indices are \textit{non-trivial} if $0<\alpha_E\le \beta_E<1$.}

Let $\alpha \in (0, \infty]$. A Banach space $E=E(0, \alpha)$ of (equivalence classes of) Lebesgue measurable functions on $(0, \alpha)$ {equipped with the norm $\|\cdot\|_
E$} is called a \textit{symmetric Banach function space} if:

\begin{itemize}
\item[(i)] $E(0, \alpha) \subset L_0(0, \alpha)$ (the space of measurable functions modulo a.e. equality), and $E(0, \alpha)$ is a Banach lattice with respect to the pointwise almost everywhere order and the norm $\|\cdot\|_E$;
\item[(ii)]  if $f \in E(0, \alpha)$ and $g$ is a measurable function on $(0, \alpha)$ with $\mu(g) \le \mu(f)$, then $g \in E(0, \alpha)$ and $\|g\|_E \le \|f\|_E$, where $\mu(f)$ is the non-increasing rearrangement of $f$, that is \[
{\mu(t,f) := \inf\left\{ \lambda > 0 : \left|\left\{ s \in (0, \alpha) : |f(s)| > \lambda \right\}\right| \le t \right\}, \quad t\in (0,\infty).}
\]
\item[(iii)] $E(0,\alpha)\neq\{0\};$
\end{itemize}

{ For \( 1 \leq p \le \infty \), the classical example of symmetric Banach function spaces is
$L_p(0,\infty)$, the set of all {(equivalence classes of) measurable functions} $f \colon (0,\infty) \to \mathbb{C}$ with $\|f\|_p := \left( \int_0^\infty |f(t)|^p \, dt \right)^{1/p} < \infty$, for $1\le p<\infty$ and 
$$ \|f\|_\infty := \operatorname*{ess\,sup}_{t > 0} |f(t)| < \infty,\ p=\infty.$$

For $s>0$ consider
the dilation operator $D_s$ on $L_0(0,\infty)$ defined as
$$(D_s (f) )(t)=f(t/s), \quad t\in (0,\infty).$$

The \textit{Boyd indices} of a symmetric Banach function space \( E(0,\infty) \) are defined by \[
\alpha_E = \liminf_{0 < s < 1} \frac{\log \|D_s\|_{E \to E}}{\log s} = \lim_{s \to 0^+} \frac{\log \|D_s\|_{E \to E}}{\log s}.
\]
\[
\beta_E = \limsup_{s > 1} \frac{\log \|D_s\|_{E \to E}}{\log s} = \lim_{s \to \infty} \frac{\log \|D_s\|_{E \to E}}{\log s},
\]
In general, \( 0 \le \alpha_E \le \beta_E \le 1 \). We say that the Boyd indices are \textit{non-trivial} if \( 0 < \alpha_E \le \beta_E < 1 \).

 Our main results (Theorems~\ref{answer} and~\ref{MainThm_OpSp}) are presented within the (equivalent) framework of interpolation theory rather than in terms of Boyd indices, as the former offers a more precise and widely accepted language among experts in the field. In what follows, we introduce the necessary terminology from interpolation theory and describe its connection to Boyd indices.

 Recall that Banach spaces $(Y,\Vert\cdot\Vert
_{Y})\subset(W,\Vert\cdot\Vert_{W})\subset(X,\Vert\cdot\Vert_{X})$ form an
interpolation triple if any operator $A\in B(X)$ preserving $Y$ and bounded in
$\Vert\cdot\Vert_{Y},$ preserves $W$ and is bounded in $\Vert\cdot\Vert_{W}$. In this case we say that the space $W$ is \textit{an interpolation space} between $Y$ and $X$ and shortly write $W\in{\rm Int}(Y,X)$.

The following result highlights the close connection between the Boyd indices and being an interpolation space between two 
$L_p$ -spaces (see e.g. \cite[Theorem 3.11]{Dirksen2015}, {see also numerous predecessors of this result in a slightly less general setting \cite{Boyd1969, BSh, KPS})}.

\begin{thm}[Boyd's Theorem]\label{thm_Boyd}
Let \( 1 < p < q < \infty \), and let \( E \) be a symmetric Banach sequence space (respectively, symmetric Banach function space). Then \( E \) is an interpolation space with respect to the couple \( (\ell_p, \ell_q) \) (respectively, \( (L_p, L_q) \)) if and only if the Boyd indices of \( E \) satisfy
\[
\frac{1}{q} \le \alpha_E \le \beta_E \le \frac{1}{p}.
\]
\end{thm}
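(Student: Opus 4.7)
The plan is to prove the two directions of the equivalence separately, using the dilation operators $D_s$ (function case) and $D_n, D_{1/n}$ (sequence case) as the central test objects in both directions. Since the continuous and discrete arguments are parallel, I shall describe only the function-space case.

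\textbf{Necessity.} Suppose $E \in \mathrm{Int}(L_p, L_q)$. A direct computation gives the explicit norm identities
\[
\|D_s\|_{L_p \to L_p} = s^{1/p}, \qquad \|D_s\|_{L_q \to L_q} = s^{1/q}.
\]
The interpolation hypothesis, combined with a standard quantitative refinement (Aronszajn--Gagliardo, or a closed-graph argument on the orbit space of $\{D_s\}$), produces a constant $C > 0$ with
\[
\|D_s\|_{E \to E} \le C \max\bigl(s^{1/p}, s^{1/q}\bigr) \quad \text{for all } s > 0.
\]
Restricting to $s > 1$ (so the maximum equals $s^{1/p}$), taking logarithms, dividing by $\log s$ and letting $s \to \infty$ gives $\beta_E \le 1/p$; the symmetric manoeuvre for $0 < s < 1$ gives $\alpha_E \ge 1/q$.

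\textbf{Sufficiency.} The strategy is to invoke the Calder\'on--Mitjagin theorem: for $1 < p < q < \infty$ the couple $(L_p, L_q)$ is a Calder\'on couple, so that $E$ is an interpolation space between $L_p$ and $L_q$ if and only if the associated Calder\'on operator $S_{p,q}$ (which majorises, in the $K$-functional sense, the image under any operator bounded on both endpoints) is bounded on $E$. Writing $S_{p,q}$ as an integral superposition of dilations against an explicit kernel $k_{p,q}$ whose tail behaviour is $|k_{p,q}(s)| \asymp s^{-1/p}$ as $s \to \infty$ and $|k_{p,q}(s)| \asymp s^{-1/q}$ as $s \to 0^+$, Minkowski's integral inequality reduces the question to convergence of
\[
\int_0^\infty \|D_s\|_{E \to E}\,|k_{p,q}(s)|\,\frac{ds}{s},
\]
which, given arbitrary $\varepsilon$-slack, is secured by the Boyd index assumption $1/q \le \alpha_E \le \beta_E \le 1/p$.

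The main obstacle is the boundary case in which $\alpha_E = 1/q$ or $\beta_E = 1/p$: the above Minkowski bound then yields only a logarithmically divergent integral. Here one has to sharpen the Calder\'on reduction by working with a finer $J$-functional orbit description of the Calder\'on couple, or, equivalently, by exhausting $E$ through the scale of symmetric spaces with strict Boyd indices inside $(1/q, 1/p)$ and passing to a limit of uniformly controlled interpolation constants. A secondary technical point is the need for the analogous Calder\'on theorem for the sequence couple $(\ell_p, \ell_q)$, which can be obtained either directly or via the standard transfer that realises $\ell_p$ as the subspace of step functions in $L_p(0,\infty)$.
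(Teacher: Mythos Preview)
The paper does not prove this theorem: Boyd's theorem is quoted from the literature (with references to Dirksen, Boyd, Bennett--Sharpley, and Kre\u{\i}n--Petunin--Semenov) and used as a black box in the proof of Theorem~\ref{MainThm_OpSp}. There is therefore no in-paper argument to compare your proposal against.

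On the substance of your outline: the necessity direction is correct and essentially complete. Dilations act on each endpoint with the explicit norms $s^{1/p}$ and $s^{1/q}$, and the interpolation hypothesis (via the Aronszajn--Gagliardo bound on the interpolation constant) transfers these to $E$, which yields the two index inequalities after taking logarithms and limits. For sufficiency your plan is the classical one---$(L_p,L_q)$ is a Calder\'on couple, so it suffices to bound the associated Calder\'on operator $S_{p,q}$ on $E$, and writing $S_{p,q}$ as a superposition of dilations reduces this to an integrability condition on $s\mapsto\|D_s\|_{E\to E}$. This works cleanly when the Boyd inequalities are \emph{strict}.

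You correctly flag the boundary cases $\alpha_E=1/q$ or $\beta_E=1/p$ as the obstacle, but the two fixes you propose (exhaustion by spaces with strictly interior indices, or a finer $J$-method argument) are only gestures and do not close the gap as written. In fact the boundary case is a genuine problem for the theorem as literally stated: the space $L_{p,\infty}(0,\infty)$ (with its equivalent Banach norm, $p>1$) is a symmetric Banach function space with $\alpha_E=\beta_E=1/p$, yet $t^{-1/p}\chi_{(0,1)}\in L_{p,\infty}$ does not lie in $L_p+L_q$ for any $q>p$, so $L_{p,\infty}$ is not even intermediate for $(L_p,L_q)$, let alone an interpolation space. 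Hence either an implicit intermediacy hypothesis is being assumed, or the cited references carry a slightly different formulation; your exhaustion argument cannot repair this without such an extra hypothesis. None of this affects the paper's applications, since Theorem~\ref{MainThm_OpSp} only invokes the result in the form ``$E$ has non-trivial Boyd indices $\Longleftrightarrow$ $E\in\mathrm{Int}(L_p,L_q)$ for \emph{some} $1<p<q<\infty$'', where one may always choose $p,q$ so that the inequalities are strict and your Minkowski estimate goes through.
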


 }

\subsection*{Non-commutative integration theory}

Let $(\mathcal{M}, \tau)$ be a semifinite von Neumann algebra with a faithful normal semifinite trace $\tau$.
A Banach space $E(\mathcal{M}, \tau)$ from $\mathcal{S}(\mathcal{M}, \tau)$, the set of all $\tau$-measurable operators affiliated with $\mathcal{M}$ {(see e.g. \cite[Section 2.3]{DPS-book})},  is called a \textit{symmetric operator space} if there exists a symmetric Banach function space $E$ on $(0, \infty)$ or symmetric Banach sequence space $E$ (depending on the type of $\mathcal M$) such that:
\[
E(\mathcal{M}, \tau) = \{ x \in \mathcal{S}(\mathcal{M}, \tau)\, :\, \mu(x) \in E \}, \quad \|x\|_{E(\mathcal{M}, \tau)} := \|\mu(x)\|_E,
\]
where $\mu(x)$ denotes the generalized singular value function (i.e., the non-increasing rearrangement of $|x|$ with respect to the trace $\tau$). For brevity, throughout the text,  we shall sometimes write $E(\mathcal{M})$ instead of $E(\mathcal{M}, \tau)$, and  $\|x\|_{E}$ instead of $\|x\|_{E(\mathcal{M}, \tau)}$. More details about symmetric operator spaces can be found in \cite{LSZ-book},\cite{DPS-book}, \cite{KS}, \cite{DDdP1989}.

If $\mathcal{M} = L_\infty(0,\alpha)$ and $\tau$ is integration with respect to Lebesgue measure, then $E(\mathcal{M}, \tau)$ coincides with the symmetric Banach function space $E(0,\alpha)$.

If $\mathcal{M} = \ell_\infty$ and $\tau$ is the counting measure $\Sigma$, then $E(\mathcal{M}, \tau)$ is  the symmetric Banach sequence space $E \subset\ell_{\infty}.$

If $\mathcal{M} = B(H)$ (bounded operators on a separable Hilbert space $H$) and $\tau = \operatorname{Tr}$ is the canonical trace, then $E(\mathcal{M}, \tau)$ consists of those $x \in B(H)$ such that the corresponding sequence of singular values $\mu(x)$ belongs to $E.$ This gives rise to \textit{symmetric norm ideals in $B(H)$}, denoted by $\mathcal{E}$.

{
For \( 1 \leq p < \infty \), we set
\[
L_p(\mathcal{M}) = \left\{ x \in \mathcal{S}(\mathcal{M}, \tau) : \tau(|x|^p) < \infty \right\}, \quad \|x\|_p = \left( \tau(|x|^p) \right)^{1/p}.
\]

For $x\in \mathcal{S}(\mathcal{M},\tau)$, the generalized singular value function of $x$ is defined by
$$\mu(t,x)=\inf\left\{s>0: \tau\left(\chi_{(s,\infty)}(|x|)\right)\leq t\right\}, \quad t>0,$$
where $|x|=(x^*x)^{1/2}$ and $\chi_{(s,\infty)}(|x|)$ is the spectral projection of $|x|$ corresponding to the interval $(s,\infty).$
The space \( L_{1,\infty}(\mathcal{M}) \) is defined by
\[
L_{1,\infty}(\mathcal{M}) = \left\{ x \in \mathcal{S}(\mathcal{M}, \tau) : \sup_{t > 0} t\,\mu(t, x) < \infty \right\}.
\]

We equip \( L_{1,\infty}(\mathcal{M}) \) with the quasi-norm
\[
\|x\|_{1,\infty} = \sup_{t > 0} t\,\mu(t, x), \quad x \in L_{1,\infty}(\mathcal{M}).
\]

For $x,y\in \mathcal{S}(\mathcal{M},\tau)$, we have (see e.g. \cite{DPS-book}, \cite{LSZ-book}):
\begin{equation}\label{singular-triangle}
\mu(t+s,x+y)\leq \mu(t,x)+\mu(s,y),\quad s,t>0.
\end{equation}
Then, we have that
\begin{multline*}
\qquad \|x + y\|_{1,\infty} = \sup_{t > 0} t\,\mu(t, x + y) 
\\ \leq \sup_{t > 0} \left( t\,\mu\left( \frac{t}{2}, x \right) + t\,\mu\left( \frac{t}{2}, y \right) \right)
\leq 2 \|x\|_{1,\infty} + 2 \|y\|_{1,\infty}.\qquad 
\end{multline*}
}
The quasi-normed space \( (L_{1,\infty}(\mathcal{M}), \|\cdot\|_{1,\infty}) \) is, in fact, quasi-Banach (see e.g.\ \cite[Section 7]{KS}). Naturally we set
$L_{1,\infty}(0,\infty) = L_{1,\infty}(L_\infty(0,\infty)) 
$

{ The collection of all closed, densely defined operators $x$ in $H$, affiliated with the von Neumann algebra $\mathcal{M}$, and satisfying
\[
\tau\left( e^{|x|}(\lambda, \infty) \right) < \infty \quad \text{for all } \lambda > 0,
\]
will be denoted by $S_0(\mathcal{M},\tau)$.
The elements of $S_0(\mathcal{M},\tau)$ are sometimes called \emph{$\tau$-compact operators}. Note that each $x \in S_0(\mathcal{M},\tau)$ is $\tau$-measurable, that is, $S_0(\mathcal{M},\tau) \subset S(\mathcal{M},\tau)$ (for more details see \cite[\S 2.4]{DPS-book}).

}

A von Neumann algebra $\mathcal{M}$ on a Hilbert space ${H}$ is called a \textit{factor} if its center consists only of scalar multiples of the identity, i.e.,
\[
\mathcal{Z}(\mathcal{M}) = \mathcal{M} \cap \mathcal{M}' = \mathbb{C}\mathbf{1},
\] {where $\mathcal{M}' $ is the commutant of $\mathcal{M} $ and $\mathbf{1}$ is the identity operator on $H$.

 A factor \(\mathcal{M}\) is said to be of \emph{type $\mathrm{I}$} if it contains non-zero abelian projections. In this case it is of type \(\mathrm{I}_n\) if \(\mathcal{M} \cong M_n(\mathbb{C})\), \(n \times n\) complex matrices and of type \(\mathrm{I}_\infty\) if \(\mathcal{M} \cong {B}(H)\) for an infinite-dimensional Hilbert space \(H\).  A factor \(\mathcal{M}\) is said to be of \emph{type $\mathrm{II}$} if it has no minimal projections and admits a faithful normal semifinite trace. In this case it can be of type \(\mathrm{II}_1\) if the trace is finite, e.g., the hyperfinite \(\mathrm{II}_1\) factor, and
     \(\mathrm{II}_\infty\) if the trace is infinite.
%  If a factor \(\mathcal{M}\) has no non-zero finite projections then it is called of \emph{type $\mathrm{III}$}, e.g., the Araki–Woods type $\mathrm{III}$ factors in quantum field theory.

A von Neumann algebra is called \emph{atomic} if it contains minimal projections (factors of type $\mathrm{I}$), and \emph{non-atomic} otherwise (factors of type $\mathrm{II}$%and \(\mathrm{III}\)
). 
If \(\mathcal{M}\) is an atomic factor, then corresponding $E$ in the definition of $E(\mathcal{M}, \tau)$ is a symmetric Banach sequence space.  If \(\mathcal{M}\) is a non-atomic factor, then corresponding $E$ in the definition of $E(\mathcal{M}, \tau)$ is a symmetric Banach function space (can be taken as $E(0,\infty)$). 

In this paper, we focus on factors of types $\mathrm{I}_\infty$,  $\mathrm{II}_1$  and $\mathrm{II}_\infty$.  In particular, we study the case of factors of type $\mathrm{II}_1$ in Section \ref{sec_II_1} and the case of factors of types $\mathrm{I}_\infty$ and $\mathrm{II}_\infty$ in Section \ref{sec_infinite factors}.

%In this paper, we restrict our attention to semifinite von Neumann algebras equipped with a faithful normal semifinite trace. Since any such algebra can be decomposed as a direct sum or direct integral of type $\mathrm{I}$ and $\mathrm{II}$ factors (see e.g. \cite[Ch.~I.7]{Dixmier}), it suffices to establish our results for these types only.

A detailed exposition of the theory of von Neumann algebras and symmetric operator spaces can also be found in \cite{DPS-book}.}

{In what follows, we use the notation $c_{abs}$
 to denote an absolute constant, i.e., a positive constant whose value is independent of all parameters and objects under consideration. Whenever the notation $C_{A,B,C}$
 appears, it stands for a constant depending only on the objects $A,B$ and $C$.}

\subsection*{Double operator integrals}

Let \( E \) be a spectral measure on \( \mathbb{R}^2 \) taking values in orthogonal projections in \( \mathcal{M} \). For every pair of measurable subsets \( A, B \subset \mathbb{R}^2 \), the mapping \( E(A) \otimes E(B) \colon L_2( \mathcal{M}) \to L_2( \mathcal{M}) \), given by
\[
E(A) \otimes E(B)(x) = E(A) x E(B), \quad \text{for } x \in L_2( \mathcal{M}),
\]
defines an orthogonal projection on \( L_2( \mathcal{M}) \). The mapping \( G \colon A \times B \mapsto E(A) \otimes E(B) \) can be extended to the algebra of all measurable subsets of \( \mathbb{R}^2 \times \mathbb{R}^2 \) so that the extension is a spectral measure on \( \mathbb{R}^2 \times \mathbb{R}^2 \) with values in orthogonal projections of \( {B}(L_2( \mathcal{M})) \) (see \cite[Definition 2.9 and Remark 3.1]{PSW} and also \cite[Theorem V.2.6]{BS}). We call this measure \( G = E \otimes E \).

For every bounded Borel function \( \varphi \colon \mathbb{R}^2 \times \mathbb{R}^2 \to \mathbb{C} \), the spectral integral
\[
T^{E}_\varphi = \int_{\mathbb{R}^2 \times \mathbb{R}^2} \varphi(u, v) \, dG(u, v), \quad G = E \otimes E 
\]
defines a bounded linear operator \( T^{E}_\varphi\) on \( L_2(\mathcal{M}) \). The norm of \( T^{E}_\varphi \) on \( L_2(\mathcal{M}) \) is controlled by \( \|\varphi\|_\infty \).

The operator \( T^{E}_\varphi \) is called the \textit{double operator integral} of \( \phi \) with respect to the spectral measure \( E \).  {Let $E_1(\mathcal{M},\tau)$ and $E_2(\mathcal{M},\tau)$ be symmetric operator spaces or $E_2(\mathcal{M},\tau)=L_{1,\infty}(\mathcal{M}, \tau)$.}  If \( T^{E}_\varphi\) admits a bounded extension from \( L_2(\mathcal{M}) \cap E_1(\mathcal{M},\tau) \) onto the space \( E_2(\mathcal{M},\tau) \), then we say that \( T^{E}_\varphi \) is bounded from \( E_1(\mathcal{M},\tau) \) to $E_2(\mathcal{M},\tau)$.  {An extensive account of the theory of double operator integrals for $\tau$-measurable operators is  presented in \cite{DDSZ2020}. }

%If \( \phi \) is a simple function: \( \phi(x, y) = \sum_{k=1}^n a_k(x) b_k(y) \), then \( \mathcal{I}_\phi \) is a multiplication operator with normal coefficients:
%\[
%\mathcal{I}_\phi(X) = \sum_{k=1}^n M_k X N_k, \tag{2.2}
%\]
%where
%\[
%M_k = \int_{\mathbb{R}^n} a_k(x) \, dF(x), \quad \text{and} \quad N_k = \int_{\mathbb{R}^n} b_k(x) \, dF(x).
%\]

  In the special case when $E$ is supported on $\mathbb{Z}^2$, we have \begin{equation}\label{eq_DOI_discrete}T^E_\varphi(x)=\sum_{i,j\in\mathbb{Z}^2} \varphi(i,j)E(\{i\})x E(\{j\}).
  \end{equation}

If \( \varphi \) is a simple function \( \varphi(u,v) = \sum_{k=1}^n a_k(u)b_k(v) \), then \( T^E_\varphi \) is a multiplication operator with normal coefficients (see e.g. \cite[(2.2)]{KPSS})
\[
T^E_\varphi(x) = \sum_{k=1}^n M_k x N_k,
\]
where
\[
M_k = \int_{\mathbb{R}^2} a_k(u) \, dE(u), \quad N_k = \int_{\mathbb{R}^2} b_k(v) \, dE(v).
\] In particular, if $\varphi(u,v)=(u-v)\chi_A(u)\chi_A(v)$ for a bounded Borel set $A\subset \mathbb{C}$ and $E$ is a spectral measure of a normal operator $y\in \mathcal{S}(\mathcal{M},\tau)$, then 
\begin{multline}
\label{eq_u-v_phi}
    T^{E}_\varphi(x) = \big(\int_{\mathbb{R}^2}u \chi_A(u)dE(u)\big)x \big(\int_{\mathbb{R}^2} \chi_A(v)dE(v)\big)\\-\big(\int_{\mathbb{R}^2}\chi_A(u)dE(u)\big)x \big(\int_{\mathbb{R}^2} v\chi_A(v)dE(v)\big)\\=y E(A) xE(A)-E(A) x E(A) y=[y,E(A)xE(A)].
\end{multline} Similarly, $\psi(u,v)=(\overline{u}-\overline{v})\chi_A(u)\chi_A(v)$, then \begin{multline}
\label{eq_u-v_psi}
    T^{E}_\psi(x) = \big(\int_{\mathbb{R}^2}\overline{u} \chi_A(u)dE(u)\big)x \big(\int_{\mathbb{R}^2} \chi_A(v)dE(v)\big)\\-\big(\int_{\mathbb{R}^2}\chi_A(u)dE(u)\big)x \big(\int_{\mathbb{R}^2} \overline{v}\chi_A(v)dE(v)\big)\\=y^* E(A) xE(A)-E(A) x E(A) y^*=[y^*,E(A)xE(A)].
\end{multline}

The following lemma collects  properties of the map \( \phi \mapsto T^{E}_\varphi \) needed below. The proof directly follows from the spectral theorem.

\begin{lem}
%    \begin{itemize}
 %       \item[(i)] 
        The mapping \( \phi \mapsto T^{E}_\varphi \) is a homomorphism of the algebra \( \mathcal{B} \) of all bounded Borel functions on \( \mathbb{R}^2 \times \mathbb{R}^2 \) to \( B(L_2(\mathcal{M},\tau)) \). If \( \phi, \psi \in \mathcal{B} \) and \( T^{E}_\varphi, T^{E}_\psi \in B(L_p(\mathcal{M},\tau)) \), then \( T^{E}_{\phi \psi} \in B(L_p(\mathcal{M},\tau)) \) and
\begin{equation}\label{eq_hom}
    T^{E}_{\phi \psi} = T^{E}_\phi T^{E}_\psi.
\end{equation}
   % \end{itemize}
\end{lem}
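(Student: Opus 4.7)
The plan is to reduce both assertions to the standard Borel functional calculus associated with the projection-valued measure $G = E \otimes E$, and then bootstrap the $L_2$ identity to $L_p$ by a density argument.

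First, on $L_2(\mathcal{M},\tau)$, the map $\varphi \mapsto T^E_\varphi = \int \varphi\, dG$ is precisely the functional calculus of the spectral measure $G$, and as such is automatically a $*$-algebra homomorphism. To see this concretely I would verify multiplicativity on indicators of rectangles: for Borel sets $A, B, A', B' \subset \mathbb{R}^2$ and $x \in L_2(\mathcal{M},\tau)$, the action of the composition is
\[
(E(A) \otimes E(B))(E(A') \otimes E(B'))(x) = E(A)E(A')\, x\, E(B')E(B) = E(A \cap A')\, x\, E(B \cap B'),
\]
which matches $\chi_{A \times B} \cdot \chi_{A' \times B'} = \chi_{(A \cap A') \times (B \cap B')}$. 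Extending bilinearly yields multiplicativity on simple functions on $\mathbb{R}^2 \times \mathbb{R}^2$, and a standard bounded convergence argument (using the a priori bound $\|T^E_\varphi\|_{B(L_2)} \le \|\varphi\|_\infty$ together with strong operator continuity of $G$) promotes it to arbitrary $\varphi, \psi \in \mathcal{B}$.

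Second, assume that $T^E_\varphi$ and $T^E_\psi$ both extend to bounded operators on $L_p(\mathcal{M},\tau)$. Then the composition $T^E_\varphi T^E_\psi$ is bounded on $L_p(\mathcal{M},\tau)$. On the subspace $L_p(\mathcal{M},\tau) \cap L_2(\mathcal{M},\tau)$, which is dense in $L_p(\mathcal{M},\tau)$ (by the standard truncation with spectral projections of $|x|$ of finite trace in the semifinite setting), the identity $T^E_{\varphi\psi} = T^E_\varphi T^E_\psi$ already holds by the first step. Consequently $T^E_{\varphi\psi}$ admits a (unique) bounded extension to $L_p(\mathcal{M},\tau)$, and this extension coincides with $T^E_\varphi T^E_\psi$.

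The only delicate point is the bounded convergence step in the first paragraph, where pointwise bounded convergence $\varphi_n \to \varphi$ must imply $T^E_{\varphi_n}(x) \to T^E_\varphi(x)$ in $L_2(\mathcal{M},\tau)$ for each fixed $x$. This is precisely the strong operator continuity of spectral integration with respect to the projection-valued measure $G$, which is a general feature of the Borel functional calculus and is not specific to the present noncommutative setting. The density step in the second paragraph is routine, so I do not expect a genuine obstacle beyond these standard verifications.
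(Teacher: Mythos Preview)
Your argument is correct and is exactly the approach the paper has in mind: the paper's proof consists of the single sentence ``The proof directly follows from the spectral theorem,'' and what you have written is a faithful unpacking of that sentence---multiplicativity of the Borel functional calculus for the product spectral measure $G=E\otimes E$ on $L_2$, followed by the routine density extension to $L_p$ via $L_2\cap L_p$.
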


%(i) 

%(ii) Let a sequence \( \{\phi_m\}_{m \geq 1} \) of bounded Borel functions on \( \mathbb{R}^n \times \mathbb{R}^n \) converge pointwise to a bounded Borel function \( \phi \). If the sequence of operators \( \{I_{\phi_m}\}_{m \geq 1} \) is uniformly bounded on \( L^p \), then the operator \( I_\phi \) is bounded on \( L^p \) and
%\[
%\|I_\phi\|_{B(L^p)} \leq \limsup_{m \to \infty} \|I_{\phi_m}\|_{B(L^p)}. \tag{2.3}
%\]

%(iii) For \( m \geq 1 \), let \( F_m \) be the discrete approximation of \( F \) given for each Borel \( \Omega \subseteq \mathbb{R}^n \) by
%\[
%F_m(\Omega) = \sum_{k \in \mathbb{Z}^n, k/m \in \Omega} F_{m,k}, \quad \text{where } F_{m,k} = F\left( \prod_{j=1}^n \left[ \frac{k_j - 1}{m}, \frac{k_j}{m} \right] \right),
%\]
%and \( k = (k_1, \dots, k_n) \). Let
%\[
%I_{\phi,m} = \int_{\mathbb{R}^n \times \mathbb{R}^n} \phi \, d(F_m \otimes F_m) = \sum_{k, k' \in \mathbb{Z}^n} \phi\left( \frac{k}{m}, \frac{k'}{m} \right) F_{m,k} X F_{m,k'}. \tag{2.4}
%\]

%If \( \varphi \) is uniformly continuous, then \( \lim_{m \to \infty} \|I_\phi - I_{\phi,m}\|_{B(L^2)} = 0 \). In particular, if the sequence of operators \( \{I_{\phi_m}\}_m \) is uniformly bounded on \( L^p \), then \( I_\phi \) is bounded on \( L^p \) and the inequality (2.3) holds with \( I_{\phi_m} \) replaced by \( I_{\phi,m} \).

  The following important property of double operator integrals deals with substitution and is one of the key technical tools of our exposition (see \cite[Lemma 2.2]{KPSS} and  \cite[Lemma 8]{PS-2009}
 for the proof).

\begin{lem}\label{lem_substitution}
    Let $\lambda$ be a Borel map on $\mathbb{R}^2$. For a spectral measure $E$ on $\mathbb{R}^2$ define a spectral measure $F$ on $\mathbb{R}^2$ by the rule $$F=E\circ \lambda.$$ If $\varphi$ is a bounded Borel function on $\mathbb{R}^2\times \mathbb{R}^2$, then \[
T^{E\circ \lambda}_\varphi = \int_{\mathbb{R}^2 \times \mathbb{R}^2} \varphi(u, v) \, dG(u, v)=\int_{\mathbb{R}^2 \times \mathbb{R}^2} \varphi(\lambda(u), \lambda(v)) \, dH(u, v)=T^{E}_{\varphi\circ \lambda}, 
\] where $G = F \otimes F$ and $H = E \otimes E.$
\end{lem}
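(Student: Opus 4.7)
The plan is to reduce the claimed identity to the standard change-of-variables formula for scalar spectral integrals, lifted to the level of the product spectral measures $G = F \otimes F$ and $H = E \otimes E$ on $\mathbb{R}^2 \times \mathbb{R}^2$.

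First, I would unpack the definitions: by $F = E \circ \lambda$ one means $F(B) = E(\lambda^{-1}(B))$ for every Borel $B \subset \mathbb{R}^2$, so that $F$ is again a projection-valued measure on $\mathbb{R}^2$. On Borel rectangles the product spectral measures act by $G(A \times B)(x) = F(A)\, x\, F(B)$ and $H(A \times B)(x) = E(A)\, x\, E(B)$ for $x \in L_2(\mathcal{M}, \tau)$.

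Second, introducing the product map $\Lambda \colon \mathbb{R}^2 \times \mathbb{R}^2 \to \mathbb{R}^2 \times \mathbb{R}^2$ defined by $\Lambda(u,v) := (\lambda(u), \lambda(v))$, I would check on product sets that
\[
G(A \times B)(x) = E(\lambda^{-1}(A))\, x\, E(\lambda^{-1}(B)) = H\bigl((\lambda \times \lambda)^{-1}(A \times B)\bigr)(x).
\]
Hence $G$ and $H \circ \Lambda$ are projection-valued measures on $\mathbb{R}^2 \times \mathbb{R}^2$ which agree on the algebra of Borel rectangles; by uniqueness of extension they coincide on the full Borel $\sigma$-algebra, i.e.\ $G = H \circ \Lambda$.

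Third, I would invoke the standard substitution rule for bounded Borel functional calculus with respect to a projection-valued measure $P$ and a Borel map $\mu$, namely $\int f\, d(P \circ \mu) = \int (f \circ \mu)\, dP$ for every bounded Borel $f$. Applied to $H$, $\Lambda$ and $\varphi$, this yields
\[
T^{E \circ \lambda}_{\varphi} = \int \varphi\, dG = \int \varphi\, d(H \circ \Lambda) = \int (\varphi \circ \Lambda)\, dH = T^{E}_{\varphi \circ \Lambda},
\]
and writing $\varphi \circ \Lambda(u,v) = \varphi(\lambda(u), \lambda(v))$ matches the statement of the lemma.

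The main technical obstacle is justifying the substitution rule in step three within the operator-valued setting. The cleanest route is to prove it first for indicator functions $\varphi = \chi_{A \times B}$, where it reduces to the set-theoretic identity $G = H \circ \Lambda$ already established; then extend by linearity to simple functions $\varphi = \sum_k a_k \chi_{A_k \times B_k}$; and finally to arbitrary bounded Borel $\varphi$ by uniform approximation, or equivalently by a monotone-class argument in the strong operator topology on $B(L_2(\mathcal{M}, \tau))$, using the uniform bound $\|T^{E}_{\varphi}\|_{B(L_2)} \leq \|\varphi\|_\infty$ to pass to the limit.
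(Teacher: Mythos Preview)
The paper does not supply its own proof of this lemma; it merely cites \cite[Lemma 2.2]{KPSS} and \cite[Lemma 8]{PS-2009}. Your argument is the standard one and is correct: verify $G = H\circ\Lambda$ on the $\pi$-system of Borel rectangles, extend to the full product $\sigma$-algebra by a monotone-class (Dynkin) argument applied to the scalar measures $\langle G(\cdot)\xi,\eta\rangle$, and then pass from indicators to simple functions to bounded Borel functions using the uniform bound $\|T^E_\varphi\|_{B(L_2)}\le\|\varphi\|_\infty$. One minor streamlining: once $G = H\circ\Lambda$ holds on \emph{all} Borel sets, the identity $\int\chi_C\,dG = \int(\chi_C\circ\Lambda)\,dH$ is immediate for every Borel $C\subset\mathbb{R}^2\times\mathbb{R}^2$, so in your final step you may approximate by simple functions built from arbitrary Borel sets rather than only rectangles, which avoids any delicacy in the density argument.
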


\subsection{Calderón operator and Hilbert transform}
The \textit{Lorentz space} \( \Lambda_{\log}(0,\infty) \) consists of all measurable functions \( f : (0,\infty) \to \mathbb{R} \) such that
\[
\|f\|_{\Lambda_{\log}} := \int_0^\infty \mu(t,f) \, \frac{1}{1 + t} \, dt < \infty.
\] 
For each \( f \in \Lambda_{\log}(0,\infty) \), define the Calderón operator by
\begin{equation}\label{eq_def_Cald}
    (Sf)(t) := \frac{1}{t} \int_0^t f(s)\,ds + \int_t^\infty \frac{f(s)}{s}\,ds.
\end{equation}
This operator maps \( \Lambda_{\log}(0,\infty) \) into \( L_{1,\infty}(0,\infty) + L_\infty(0,\infty) \), and it can be written as the sum of the Hardy-type operators
\[
(Cf)(t) := \frac{1}{t} \int_0^t f(s)\,ds, \qquad (C'f)(t) := \int_t^\infty \frac{f(s)}{s}\,ds.
\]
If $f\in \Lambda_{\log}(\mathbb{R}),$ then the classical Hilbert transform $\mathcal{H}$ is defined by the principal-value integral
\begin{equation}\label{hilbert tr}
(\mathcal{H}f)(t):=p.v.\frac{1}{\pi}\int_{\mathbb{R}}\frac{f(s)}{t-s}ds, \,\  f\in \Lambda_{\log}(\mathbb{R}),
\end{equation}
For more details on these operators in quasi-Banach symmetric spaces we refer the reader to \cite{STZ} and \cite{STZ2}.

\section{Main results}\label{sec_main_results}

\begin{defi}\label{def_FugOpSp}  Let $(\mathcal{M},\tau)$ be a semifinite factor. Let $E$ be a symmetric Banach function space on $(0,\tau(1))$ or symmetric Banach sequence space (depending on the type of $\mathcal{M}$). We say that $E(\mathcal{M},\tau)$ is Fuglede if
$$\|[X^{\ast},Y]\|_{E(\mathcal{M},\tau)}\leq c_E\|[X,Y]\|_{E(\mathcal{M},\tau)}$$
for every normal $X\in E(\mathcal{M},\tau)$ and for every $Y\in\mathcal{M}.$
\end{defi}

\begin{thm}\label{MainThm_OpSp} Let $(\mathcal{M},\tau)$ be a semifinite factor. Let $E$ be a symmetric Banach function space on $(0,\tau(1))$ or symmetric Banach sequence space (depending on the type of $\mathcal{M}$). The space $E(\mathcal{M},\tau)$ is Fuglede if and only if $E\in{\rm Int}(L_p(0,\tau(1)),L_q(0,\tau(1)))$  (respectively, $E\in{\rm Int}(\ell_p,\ell_q)$) for some $1<p<q<\infty.$ 
\end{thm}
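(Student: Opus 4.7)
The plan is to prove the two directions separately, paralleling the three-section split announced in the introduction.

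For \emph{sufficiency}, assume $E\in\mathrm{Int}(L_p(0,\tau(1)),L_q(0,\tau(1)))$ for some $1<p<q<\infty$. Given a normal $X\in E(\mathcal{M},\tau)$ with spectral measure $E_X$ and $Y\in\mathcal{M}$, I would exploit the double operator integral formalism. Setting $P_A=E_X(A)$ for a bounded Borel set $A\subset\mathbb{C}$, formulas \eqref{eq_u-v_phi} and \eqref{eq_u-v_psi} identify $[X,P_AYP_A]$ and $[X^*,P_AYP_A]$ with the double operator integrals of the symbols $(u-v)\chi_A(u)\chi_A(v)$ and $(\bar u-\bar v)\chi_A(u)\chi_A(v)$, respectively. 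The homomorphism property \eqref{eq_hom} then yields $[X^*,P_AYP_A]=T^{E_X}_\phi([X,P_AYP_A])$ with the unimodular symbol $\phi(u,v)=(\bar u-\bar v)/(u-v)$. Passing to the strong limit $A\uparrow\mathbb{C}$, combined with a Fatou-type lower semicontinuity argument for $\|\cdot\|_{E(\mathcal{M})}$, the Fuglede inequality reduces to a uniform bound $\|T^{E_X}_\phi\|_{E(\mathcal{M})\to E(\mathcal{M})}\le C_E$, independent of $X$. Because $\phi$ is unimodular, $T^{E_X}_\phi$ is an $L_2(\mathcal{M})$-contraction, and the classical theorems of Weiss, Abdessemed--Davies and Shulman give uniform boundedness on $L_p(\mathcal{M})$ for every $1<p<\infty$. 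Interpolation for symmetric operator spaces together with the hypothesis $E\in\mathrm{Int}(L_p,L_q)$ then produces the required bound on $E(\mathcal{M})$.

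For \emph{necessity}, I would argue by contrapositive: if $E\notin\mathrm{Int}(L_p,L_q)$ for every $1<p<q<\infty$, equivalently (by Theorem~\ref{thm_Boyd}) if $\alpha_E=0$ or $\beta_E=1$, I want to exhibit a normal $X\in E(\mathcal{M},\tau)$ and $Y\in\mathcal{M}$ violating the Fuglede inequality. The natural testing ground is a maximal abelian diagonal subalgebra inside a matrix subfactor of $\mathcal{M}$. I choose $X$ diagonal with eigenvalues $\lambda_k=a_k+ib_k\in\mathbb{C}$ arranged on a two-dimensional pattern; in that eigenbasis, $[X,Y]$ and $[X^*,Y]$ are Schur products of $Y$ with the matrices $(\lambda_i-\lambda_j)_{i,j}$ and $(\bar\lambda_i-\bar\lambda_j)_{i,j}$. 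Hence the Fuglede property forces the Schur multiplier with symbol $(\bar\lambda_i-\bar\lambda_j)/(\lambda_i-\lambda_j)$ to be bounded on $E(B(H))$. By a careful choice of $\{\lambda_k\}$ on a sublattice of $\mathbb{Z}+i\mathbb{Z}$ and a suitable compression to a distinguished matrix block, I would transfer this Schur multiplier boundedness to a classical statement on $E$: either boundedness of the triangular projection or boundedness of the discrete Hilbert/Calder\'on transform defined in \eqref{eq_def_Cald} and \eqref{hilbert tr}. By the classical Krein--Matsaev theorem (for the former) or the Boyd--Lorentz--Shimogaki theory (for the latter), such boundedness is incompatible with $\alpha_E=0$ or $\beta_E=1$, delivering the desired contradiction.

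The main obstacle lies in the necessity direction, and it splits by factor type. For infinite factors ($\mathrm{I}_\infty$ and $\mathrm{II}_\infty$, Section~\ref{sec_infinite factors}) there is ample room: any countable configuration of eigenvalues of arbitrary size is available inside $E(\mathcal{M},\tau)$, so the Schur-multiplier-to-singular-integral transference can be implemented by a direct construction, and convergence of the Calder\'on operator acts as the expected obstruction when Boyd indices degenerate. The genuinely delicate case is the $\mathrm{II}_1$ factor (Section~\ref{sec_II_1}): the finite trace prevents the use of unbounded diagonals, so I would instead work inside finite-dimensional matrix subalgebras of the hyperfinite factor $\mathcal{R}$ and extract the failure of boundedness of the Hilbert transform on $E$ from a \emph{uniform} blow-up of Fuglede-type constants as the dimension grows. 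The technical heart of the argument will be controlling these finite-dimensional approximations so that the limiting constant survives in the trivial-Boyd-index regime, since any non-uniform estimate would be absorbed by the infinite-dimensional Fuglede constant that is hypothetically in force.
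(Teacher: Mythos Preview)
Your sufficiency argument tracks the paper's closely: both reduce the Fuglede inequality to a uniform bound on the double operator integral $T^{E_X}_\Omega$ with $\Omega(z,w)=(\bar z-\bar w)/(z-w)$. The only methodological difference is that the paper, instead of citing the Schatten-class $L_p$ results and lifting $E\in\mathrm{Int}(L_p,L_q)$ to operator spaces, proves a uniform weak-type bound $T^{E_X}_\Omega:L_1(\mathcal{M})\to L_{1,\infty}(\mathcal{M})$ directly (via a transference to the Fourier multiplier $h(z)=\bar z/z$ on $\mathbb{T}^2$) and then extrapolates to the pointwise estimate $\mu(T^{E_X}_\Omega V)\le c_{\mathrm{abs}}\,S\mu(V)$ (Lemma~\ref{extrapolation lemma}); boundedness of $S$ on the commutative $E$ finishes the argument without any noncommutative interpolation. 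Your route is viable but you should be aware that the $L_p$-bounds you cite must hold for general semifinite $\mathcal{M}$ and uniformly in the spectral measure, which is true but not literally in Weiss/Abdessemed--Davies/Shulman.

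The gap is in the necessity direction, at the step ``by a careful choice of $\{\lambda_k\}$ on a sublattice of $\mathbb{Z}+i\mathbb{Z}$ \dots\ transfer to boundedness of the triangular projection''. No sublattice choice produces the triangular truncation: for $\lambda_k$ real or along any fixed complex line the symbol $(\bar\lambda_i-\bar\lambda_j)/(\lambda_i-\lambda_j)$ is constant, and for a genuine two-dimensional lattice you obtain a Riesz-type symbol with no evident compression to $\mathrm{sgn}(i-j)$. The paper's mechanism (Lemma~\ref{commutator limit lemma}) is to take eigenvalues $m^k+im^\ell$ inside $M_{n+1}(\mathbb{C})\otimes M_{n+1}(\mathbb{C})$ and let $m\to\infty$: the exponential separation forces the dominant term of the ratio to be $\pm1$ according to whether the real or imaginary part wins, so the limit is exactly $\mathrm{sgn}(k-\ell)$, i.e.\ the antisymmetric triangular truncation $T$. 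A second missing ingredient is the tensoring $X=A^n_m\otimes|x|$ with an arbitrary $x\in E(\mathcal{M},\tau)$, which yields $\|T(A)\otimes x\|\le C\|A\otimes x\|$ for \emph{all} $x$; the paper then uses the assumed triviality of a Boyd index to select specific $x_n$ (Lemmas~\ref{beta fedor lemma} and~\ref{alpha fedor lemma}) that decouple this tensorial bound into the scalar statement that $T$ is uniformly bounded on $M_n(\mathbb{C})$ or on $L_1(M_n(\mathbb{C}))$, contradicting Davies. The $\mathrm{II}_1$ case (Section~\ref{sec_II_1}) runs the same construction but inside the rescaled spaces $E_{(n+1)^2}$, with the additional bookkeeping of Lemmas~\ref{finite alpha lemma}--\ref{lem_indices_fin}; your remark about ``uniform blow-up as the dimension grows'' is the right intuition, but the actual control comes from the same eigenvalue trick, not from a separate approximation scheme.
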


\begin{proof}
    Assume that $E\in{\rm Int}(L_p(0,\tau(1)),L_q(0,\tau(1)))$ or $E\in{\rm Int}(\ell_p,\ell_q)$ for some $1<p<q<\infty.$ From Boyd's theorem (Theorem \ref{thm_Boyd}), we have that the Boyd indices of $E$ are non-trivial, that is $\alpha_E>0$ and $\beta_E<1$. In terms of the dilation operator this means that $\|D_s\|_{E\to E}=o(1)$ as $t\to 0$ and $\|D_s\|_{E\to E}=o(t)$ as $t\to \infty$.
    Following \cite[Theorem 6.9]{KPS}, we obtain that {the Hilbert transform $\mathcal{H}$ defined by \eqref{hilbert tr} } is bounded on $E$. This yields that the Calderón operator operator $S$ (see \eqref{eq_def_Cald}) is bounded on $E$ (see \cite[p.140]{KPS}). Finally, applying Lemma \ref{lem_one side}, we conclude that $E(\mathcal{M},\tau)$ is Fuglede.

    On the other hand, assume that $E(\mathcal{M},\tau)$ is Fuglede. If $(\mathcal{M},\tau)$ is a factor of type I$_{\infty}$ or II$_{\infty}$, then applying subsequently Lemma\ref{lem_another side_inf} and Lemma \ref{lem_lower} we have that $\alpha_E>0$. Similarly, applying subsequently Lemma \ref{lem_another side_inf} and Lemma \ref{lem_upper} we obtain that $\beta_E<1$.
   Assume now that $(\mathcal{M},\tau)$ is a factor of II$_{1}$. Then by Lemma \ref{lem_another side_fin}, Lemma \ref{finite alpha lemma} and Lemma \ref{lem_indices_fin} we have that $\alpha_E>0$. Similarly, by Lemma \ref{lem_another side_fin}, Lemma \ref{finite beta lemma} and Lemma \ref{lem_indices_fin}, we obtain that $\beta_E<1$.
 From Boyd's theorem (Theorem \ref{thm_Boyd}), we finally conclude that $E\in{\rm Int}(L_p(0,\tau(1)),L_q(0,\tau(1)))$ or $E\in{\rm Int}(\ell_p,\ell_q)$ for some $1<p<q<\infty.$
\end{proof}

The following result is a consequence of Theorem \ref{MainThm_OpSp} applied to $\mathcal M=B(H)$, which is a factor of type I$_{\infty}$.

\begin{thm}\label{answer} A symmetric norm ideal $\mathcal{E}$ is Fuglede if and only if $E\in{\rm Int}(\ell_p,\ell_q)$ for some $1<p<q<\infty,$ where $E$ is the corresponding  symmetric
Banach sequence space.
\end{thm}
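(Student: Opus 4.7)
The plan is to deduce Theorem~\ref{answer} directly from Theorem~\ref{MainThm_OpSp} by specializing to $\mathcal{M} = B(H)$ equipped with the canonical trace $\tau = \operatorname{Tr}$. Since $B(H)$ is a factor of type $\mathrm{I}_\infty$, it is atomic, and the general framework set up in the Preliminaries identifies the symmetric operator space $E(\mathcal{M},\tau)$ with the symmetric norm ideal $\mathcal{E}$ associated to the underlying symmetric Banach sequence space $E \subset \ell_\infty$ via the singular value sequence $\mu(x) = (s_n(x))_{n \ge 1}$. Because $\mathcal{M}$ is atomic, the sequence-space branch of Theorem~\ref{MainThm_OpSp} is the relevant one.

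Next I would verify that the Fuglede property of Definition~\ref{def_FugOpSp}, specialized to $(B(H),\operatorname{Tr})$, coincides with the notion of a Fuglede ideal from the Introduction. Both require an estimate of the form
\[
\|[X^*,Y]\|_{\mathcal{E}} \le c\,\|[X,Y]\|_{\mathcal{E}},
\]
but Definition~\ref{def_FugOpSp} restricts $X$ to be normal inside the ideal $\mathcal{E}$, whereas the Introduction allows $A$ to be any bounded normal operator on $H$ (the non-trivial instances of the inequality being those for which the right-hand side is finite, i.e.\ $[A,T] \in \mathcal{E}$). The implication from the Introduction definition to Definition~\ref{def_FugOpSp} is trivial; the reverse implication is a standard approximation argument using spectral cut-offs of $A$ together with the ideal property of $\mathcal{E}$ in $B(H)$, and does not rely on any of the new machinery developed in the present paper.

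With these identifications in place the conclusion is immediate: Theorem~\ref{MainThm_OpSp} applied to $(B(H),\operatorname{Tr})$ gives that $\mathcal{E}$ is Fuglede if and only if $E \in \operatorname{Int}(\ell_p,\ell_q)$ for some $1 < p < q < \infty$, which is exactly the statement of Theorem~\ref{answer}. I do not expect any substantive obstacle, since all the real work is carried out inside Theorem~\ref{MainThm_OpSp}; the only mildly delicate point is the reconciliation of the two formulations of \emph{Fuglede}, namely the standard passage between ``normal element in the ideal'' and ``normal element in the algebra with commutator in the ideal'', which is routine in this area.
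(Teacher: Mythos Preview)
Your proposal is correct and takes exactly the same route as the paper, which derives Theorem~\ref{answer} in a single line by specializing Theorem~\ref{MainThm_OpSp} to the type~$\mathrm{I}_\infty$ factor $\mathcal{M}=B(H)$ with $\tau=\operatorname{Tr}$. You are in fact slightly more careful than the paper in flagging the reconciliation between the two formulations of ``Fuglede'' (normal $X\in\mathcal{E}$ as in Definition~\ref{def_FugOpSp} versus arbitrary bounded normal $A$ as in the Introduction), a point the paper leaves implicit.
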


\section{Proof of sufficiency}\label{proof of safficiency}
{In this section, we derive a key estimate (Lemma \ref{lem_one side}) in order to prove that the conditions stated in Theorem \ref{MainThm_OpSp} are sufficient.}

Let $\mathcal{M}$ be a semifinite von Neumann algebra with a faithful normal semifinite trace $\tau$.

\begin{lem} For the function {$\Omega$ defined by
\begin{equation}\label{eq_def_Omega}\Omega(z,w)=\frac{\overline{z}-\overline{w}}{z-w},\quad z\neq w\in\mathbb{C}, \quad \Omega(z,z)=0, \quad z\in\mathbb{C}\end{equation}} and every spectral measure $E$ supported on $\mathbb{Z}^2$ the operator 
$$T^E_{\Omega}:L_1(\mathcal{M},\tau)\to L_{1,\infty}(\mathcal{M},\tau)$$
is bounded
\end{lem}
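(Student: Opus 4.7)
The function $\Omega$ satisfies $|\Omega(z,w)|\le 1$ everywhere, so the DOI formalism recalled earlier gives the $L_2$ endpoint for free: $\|T^{E}_\Omega\|_{L_2(\mathcal{M})\to L_2(\mathcal{M})}\le 1$. The real content of the lemma is therefore the weak endpoint $L_1\to L_{1,\infty}$.

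Since $E$ is supported on $\mathbb{Z}^2\cong\mathbb{Z}+i\mathbb{Z}$, I would parametrize $E_k=E(\{k\})$ for $k\in\mathbb{Z}^2$ and use \eqref{eq_DOI_discrete} to write
\[
T^{E}_\Omega(x)=\sum_{\xi\in\mathbb{Z}^2\setminus\{0\}}m(\xi)\,x_\xi,\quad m(\xi):=\overline{\xi}/\xi=e^{-2i\arg\xi},\quad x_\xi:=\sum_{k\in\mathbb{Z}^2}E_{k+\xi}\,x\,E_k.
\]
The symbol $m$ is the multiplier of the (discrete) Beurling transform: it is bounded with $|m|\equiv 1$ on $\mathbb{Z}^2\setminus\{0\}$ and satisfies the H\"ormander-type regularity estimate $|m(\xi)-m(\xi')|\lesssim |\xi-\xi'|/|\xi|$ for $|\xi-\xi'|\le|\xi|/2$. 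Scalar-wise, this makes $m$ a Calder\'on--Zygmund symbol of weak type $(1,1)$.

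The plan is to lift this scalar weak-type bound to the noncommutative side via a Calder\'on--Zygmund decomposition adapted to the lattice $\{E_k\}_{k\in\mathbb{Z}^2}$. Given $x\in L_1(\mathcal{M},\tau)$ and $\lambda>0$, I would decompose $x=g+b$ with $g\in L_2(\mathcal{M},\tau)$ controlled by $\|g\|_2^2\lesssim\lambda\|x\|_1$ and $b$ a ``bad'' part localized against a projection of small trace and carrying mean-zero cancellation on dyadic annuli $\{|\xi|\sim 2^n\}$. The good part is then handled by the $L_2$ bound composed with Chebyshev in $L_{1,\infty}$; the bad part is handled by leveraging the H\"ormander regularity of $m$ against the cancellation of $b$ on each dyadic scale and summing geometrically.

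The main obstacle I anticipate is the noncommutative bookkeeping for the bad part: producing a decomposition whose localizing projections interact well with the spectral projections $E_k$, and verifying off-diagonal decay for $T^{E}_\Omega$ in the genuinely operator-valued setting. A convenient alternative, if the direct decomposition proves unwieldy, is the dyadic splitting $m=\sum_n m_n$ with $m_n$ supported on $\{|\xi|\sim 2^n\}$, combined with an almost-orthogonality (Cotlar-type) argument for the pieces $T^{E}_{m_n}$; either route should produce $\|T^{E}_\Omega(x)\|_{1,\infty}\le c_{abs}\|x\|_1$ with an absolute constant independent of the spectral measure $E$ and of the underlying semifinite factor $(\mathcal{M},\tau)$.
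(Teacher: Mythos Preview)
Your identification of the symbol $m(\xi)=\bar\xi/\xi$ as a Calder\'on--Zygmund multiplier is correct, and the $L_2$ endpoint is indeed free. But the plan to run a noncommutative Calder\'on--Zygmund decomposition directly on $x\in L_1(\mathcal M,\tau)$ has a genuine gap: there is no spatial variable to localize against. The index $k\in\mathbb Z^2$ labels spectral projections $E_k$ of arbitrary trace in an abstract semifinite algebra; it is not a position variable, and $(\mathcal M,\tau)$ carries no dyadic grid, doubling metric, or martingale filtration into which a Parcet-type decomposition could be fed. Your phrase ``mean-zero cancellation on dyadic annuli $\{|\xi|\sim 2^n\}$'' conflates the frequency variable $\xi$ (where the symbol lives) with a spatial variable (where CZ atoms must live); classical CZ cancellation is spatial, and here there is no space. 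The Cotlar alternative you mention is an $L_2$ tool and does nothing for the weak endpoint.

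The paper supplies exactly the missing spatial structure by \emph{transference}. One passes to $\mathcal M\overline{\otimes} L_\infty(\mathbb T^2)$ via the unitary $U=\sum_{k\in\mathbb Z^2}E_k\otimes e_k$ and checks directly that
\[
U^{-1}\big(T^E_\Omega(V)\otimes 1\big)U
=\big({\rm id}_{\mathcal M}\otimes h(\nabla_{\mathbb T^2})\big)\big(U^{-1}(V\otimes 1)U\big),
\qquad h(z)=\bar z/z.
\]
This converts the DOI into an operator-valued Fourier multiplier on the torus, whose $L_1\to L_{1,\infty}$ boundedness is a known theorem (Caspers--Sukochev--Zanin, cited as \cite{CSZ}). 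The torus furnishes the spatial variable on which the Calder\'on--Zygmund machinery actually runs, with $\mathcal M$ carried along as coefficients; the resulting constant is automatically independent of the spectral measure $E$. Your direct route would at best amount to reproving that theorem from scratch inside the DOI framework; the transference step is the key idea you are missing.
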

\begin{proof} Let $p_k=E(\{k\}),$ $k\in\mathbb{Z}^2$ and let $ \{e_k\}_{k \in \mathbb{Z}^2} \subset L_2(\mathbb{T}^2)$ denote the standard orthonormal basis of complex exponentials on the torus, defined by
$$e_k(t){=e^{i\langle k,t\rangle}}=e^{i(k_1 t_1 + k_2 t_2)}, \quad k=(k_1,k_2) \in \mathbb{Z}^2, \; t=(t_1,t_2) \in \mathbb{T}^2.$$
Let $U\in \mathcal{M} \overline{\otimes} L_\infty(\mathbb{T}^2)$ be a unitary operator defined as
$$U=\sum_{k\in\mathbb{Z}^2}p_k\otimes e_k.$$
For every $V\in(L_1\cap L_{\infty})(\mathcal{M},\tau),$ we have
\begin{multline}\label{eq_U}U^{-1}(V\otimes 1)U=(\sum_{k\in\mathbb{Z}^2}p_k\otimes e_{-k})\cdot(V\otimes 1)\cdot(\sum_{\ell\in\mathbb{Z}^2}p_\ell\otimes e_\ell)\\=\sum_{k,\ell\in \mathbb{Z}^2} p_kVp_\ell\otimes e_{\ell-k}.\end{multline}
Define the function  $h:\mathbb{C}\to\mathbb{C}$ by setting
$$h(z)=
\begin{cases}
\frac{\overline{z}}{z},& z\neq0\\
0,&z=0
\end{cases}.$$
Then $h$ is smooth and homogeneous {of degree $0$ on $\mathbb{R}^2\setminus\{0\}$. Note, however, that $h$ is not continuous at the origin. 

Let the \textit{gradient operator} $\nabla_{\mathbb{T}^2}$ be defined as the vector-valued operator $$\nabla_{\mathbb{T}^2}=\frac{1}{i}\left( \frac{\partial}{\partial t_1}, \frac{\partial}{\partial t_2} \right).$$  The components of $\nabla_{\mathbb{T}^2}$, namely, $$D_1=\frac{1}{i} \frac{\partial}{\partial t_1}, \quad  D_2=\frac{1}{i} \frac{\partial}{\partial t_2},$$ are commuting self-adjoint operators, acting on the common core of smooth functions, which is dense in $L_2(\mathbb{T}^2)$.    In terms of the Fourier basis \( \{e_k\}_{k \in \mathbb{Z}^2} \), the operators act as
\[
D_j(e_k) = k_j e_k, \quad j=1,2, \quad\text{for all } k =(k_1,k_2)\in \mathbb{Z}^2,
\] so  each \( e_k \) is a joint eigenfunction of \( D_1 \) and $D_2$ with eigenvalues $k_1$ and $k_2$ respectively.

Using the joint functional calculus for commuting self-adjoint operators $D_1$, $D_2$ we define the operator \( h(\nabla_{\mathbb{T}^2}) := h(D_1,D_2)\) . Then for every $ k=(k_1,k_2) \in \mathbb{Z}^2$,
\[
h(\nabla_{\mathbb{T}^2})(e_k)=h(k_1,k_2)e_k= h(k) e_k.
\]
%Therefore, if a function \( f \in L^2(\mathbb{T}^2) \) has Fourier expansion
%\[
%f(t) = \sum_{k \in \mathbb{Z}^2} \hat{f}(k) e_k(t),
%\]
%then
%\[
%\nabla_{\mathbb{T}^2} f(t) = \sum_{k \in \mathbb{Z}^2} i k \, \hat{f}(k) e_k(t).
%\]
}
 Let ${\rm id}_{\mathcal{M}}$ be the identity operator on $\mathcal{M}.$ Due to \eqref{eq_U}, we have that
\begin{multline}\label{l6 eq0}
({\rm id}_{\mathcal{M}}\otimes h(\nabla_{\mathbb{T}^2}))(U^{-1}(V\otimes 1)U)=\sum_{k,\ell\in \mathbb{Z}^2} p_kVp_\ell\otimes h(\nabla_{\mathbb{T}^2})e_{\ell-k}\\=\sum_{k,\ell\in\mathbb{Z}^2}p_kVp_\ell\otimes h(\ell-k)e_{\ell-k}=\sum_{k,\ell\in\mathbb{Z}^2} h(\ell-k)p_kVp_\ell\otimes e_{\ell-k}.
\end{multline}

On the other hand, it follows from \eqref{eq_DOI_discrete} and from the equality $\Omega(\ell,k)=h(\ell-k)$ that
$$T^E_{\Omega}(V)=\sum_{k,\ell\in\mathbb{Z}^2}\Omega(\ell,k)p_kVp_\ell=\sum_{k,\ell\in\mathbb{Z}^2}h(\ell-k)p_kVp_\ell.$$
Thus,
\begin{align*}U^{-1}(T^E_{\Omega}(V)\otimes 1)U
&=\Bigl(\sum_{k'\in\mathbb{Z}^2}p_{k'}\otimes e_{-k'}\Bigr)\cdot\Bigl(\sum_{k,\ell\in\mathbb{Z}^2}h(\ell-k)p_kVp_\ell\Bigr)\cdot\Bigl(\sum_{\ell'\in\mathbb{Z}^2}p_{\ell'}\otimes e_{\ell'}\Bigr)\\
&=\sum_{k,\ell,k',\ell'\in\mathbb{Z}^2}h(\ell-k)p_{k'}p_kVp_\ell p_{\ell'}\otimes e_{\ell'-k'}.\end{align*}
Since in the latter sum $p_{k'}p_k=0$ for all  $k'\neq k$  and $p_{\ell}p_{\ell'}=0$ for all $\ell'\neq \ell$, we obtain
\begin{equation}\label{l6 eq1}
U^{-1}(T^E_{\Omega}(V)\otimes 1)U=\sum_{k,\ell\in\mathbb{Z}^2}h(\ell-k)p_kVp_\ell\otimes e_{\ell-k}.
\end{equation}

Combining \eqref{l6 eq0} and \eqref{l6 eq1}, we arrive at
$$U^{-1}(T^E_{\Omega}(V)\otimes 1)U=({\rm id}_{\mathcal{M}}\otimes h(\nabla_{\mathbb{T}^2}))(U^{-1}(V\otimes 1)U).$$
Thus,
\begin{align*}\|T^E_{\Omega}(V)\|_{L_{1,\infty}(\mathcal{M},\tau)}&=\|U^{-1}(T^E_{\Omega}(V)\otimes 1)U\|_{L_{1,\infty}(\mathcal{M}\overline{\otimes}L_{\infty}(\mathbb{T}^2),\tau\otimes m)}\\
&=\|({\rm id}_{\mathcal{M}}\otimes h(\nabla_{\mathbb{T}^2}))(U^{-1}(V\otimes 1)U)\|_{L_{1,\infty}(\mathcal{M}\overline{\otimes}L_{\infty}(\mathbb{T}^2),\tau\otimes m)}\\
&\leq\|{\rm id}_{\mathcal{M}}\otimes h(\nabla_{\mathbb{T}^2})\|_{L_1(\mathcal{M}\overline{\otimes}L_{\infty}(\mathbb{T}^2),\tau\otimes m)\to L_{1,\infty}(\mathcal{M}\overline{\otimes}L_{\infty}(\mathbb{T}^2),\tau\otimes m)}\\
&\times\|U^{-1}(V\otimes 1)U\|_{L_1(\mathcal{M}\overline{\otimes}L_{\infty}(\mathbb{T}^2),\tau\otimes m)}\\
&\leq\|{\rm id}_{\mathcal{M}}\otimes h(\nabla_{\mathbb{T}^2})\|_{L_1(\mathcal{M}\overline{\otimes}L_{\infty}(\mathbb{T}^2))\to L_{1,\infty}(\mathcal{M}\overline{\otimes}L_{\infty}(\mathbb{T}^2)}\|V\|_{L_1(\mathcal{M},\tau)}\end{align*}
for every $V\in (L_1\cap L_{\infty})(\mathcal{M},\tau).$ From \cite[Theorem 2.3]{CSZ} we know that the operator
$${\rm id}_{\mathcal{M}}\otimes h(\nabla_{\mathbb{T}^2}):L_1(\mathcal{M}\overline{\otimes}L_{\infty}(\mathbb{T}^2),\tau\otimes m)\to L_{1,\infty}(\mathcal{M}\overline{\otimes}L_{\infty}(\mathbb{T}^2),\tau\otimes m)$$
is bounded. Hence, $T^E_{\Omega}$ uniquely extends to a bounded operator from $L_1(\mathcal{M},\tau)$ to $L_{1,\infty}(\mathcal{M},\tau).$ 
\end{proof}

{
\begin{lem} For the function $\Omega$ defined in \eqref{eq_def_Omega}, we have that $$T^E_{\Omega}:L_1(\mathcal{M},\tau)\to L_{1,\infty}(\mathcal{M},\tau)$$ is bounded
for every spectral measure $E$ on $\mathbb{C}.$
\end{lem}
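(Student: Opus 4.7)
The plan is to reduce the general case to the lattice case established in the preceding lemma through a discretization-and-rescaling argument, exploiting the crucial fact that $\Omega$ is invariant under positive real scaling: $\Omega(cz,cw)=\Omega(z,w)$ for every $c>0$.

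For each $\epsilon>0$, I would define the Borel rounding map $\lambda_\epsilon:\mathbb{C}\to\epsilon\mathbb{Z}^2$ by $\lambda_\epsilon(x+iy)=\epsilon\lfloor x/\epsilon\rfloor+i\epsilon\lfloor y/\epsilon\rfloor$, and set the pushforward spectral measure $F_\epsilon:=E\circ\lambda_\epsilon^{-1}$, which is supported on $\epsilon\mathbb{Z}^2$. Put $\Omega_\epsilon(z,w):=\Omega(\lambda_\epsilon(z),\lambda_\epsilon(w))$. By Lemma \ref{lem_substitution} applied to $\lambda_\epsilon$,
\[
T^E_{\Omega_\epsilon}=T^{F_\epsilon}_\Omega,
\]
and a second application of Lemma \ref{lem_substitution} to the scaling $z\mapsto \epsilon z$, combined with $\Omega(\epsilon z,\epsilon w)=\Omega(z,w)$, identifies $T^{F_\epsilon}_\Omega$ with $T^{\widetilde F_\epsilon}_\Omega$ for a spectral measure $\widetilde F_\epsilon$ supported on $\mathbb{Z}^2$. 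The previous lemma then gives an absolute constant $c_{\mathrm{abs}}$ with
\[
\|T^E_{\Omega_\epsilon}\|_{L_1(\mathcal{M},\tau)\to L_{1,\infty}(\mathcal{M},\tau)}\leq c_{\mathrm{abs}}
\]
uniformly in $\epsilon>0$.

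Next I would pass to the limit. Off the diagonal $\Omega$ is continuous, and $\lambda_\epsilon(z)\to z$ as $\epsilon\to 0$, so for every fixed $(z,w)$ with $z\neq w$ one has $\Omega_\epsilon(z,w)\to\Omega(z,w)$ (since eventually $\lambda_\epsilon(z)\neq\lambda_\epsilon(w)$); on the diagonal both sides are $0$ by construction. Hence $\Omega_\epsilon\to\Omega$ pointwise on $\mathbb{C}^2$ with $|\Omega_\epsilon|\leq 1$. By the bounded convergence theorem for double operator integrals, for every $V\in L_2(\mathcal{M},\tau)$,
\[
T^E_{\Omega_\epsilon}(V)\xrightarrow{\ \epsilon\to 0\ }T^E_\Omega(V)\qquad\text{in }L_2(\mathcal{M},\tau),
\]
and hence in measure. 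For $V\in(L_1\cap L_\infty)(\mathcal{M},\tau)\subset L_2(\mathcal{M},\tau)$, the lower semicontinuity of $\mu(t,\cdot)$ under convergence in measure together with the definition of $\|\cdot\|_{1,\infty}$ yields
\[
\|T^E_\Omega(V)\|_{L_{1,\infty}(\mathcal{M},\tau)}\leq\liminf_{\epsilon\to 0}\|T^E_{\Omega_\epsilon}(V)\|_{L_{1,\infty}(\mathcal{M},\tau)}\leq c_{\mathrm{abs}}\,\|V\|_{L_1(\mathcal{M},\tau)}.
\]
A standard density argument on $L_1\cap L_\infty\subset L_1$ then yields the desired bounded extension $T^E_\Omega:L_1(\mathcal{M},\tau)\to L_{1,\infty}(\mathcal{M},\tau)$.

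The main obstacle is the limiting step: one must be sure that the discretized symbols $\Omega_\epsilon$ really induce DOIs converging to $T^E_\Omega$ (with the correct version of bounded pointwise convergence for spectral integrals, as in \cite{PSW,DDSZ2020}) and that the weak-$L_1$ quasi-norm is lower semicontinuous under convergence in measure of $\tau$-measurable operators (standard from the theory of generalized singular values in \cite{DPS-book,LSZ-book}). Everything else is a direct combination of the substitution lemma, the scale invariance $\Omega(cz,cw)=\Omega(z,w)$, and the preceding lattice-case lemma.
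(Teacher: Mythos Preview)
Your proposal is correct and follows essentially the same route as the paper: discretize via the rounding map $\lambda_\varepsilon$, invoke the substitution lemma to identify $T^E_{\Omega_\varepsilon}$ with a double operator integral for a spectral measure supported on a lattice, apply the preceding lemma to get a uniform $L_1\to L_{1,\infty}$ bound, and then pass to the limit using pointwise convergence $\Omega_\varepsilon\to\Omega$ together with the Fatou property of the $L_{1,\infty}$ quasi-norm. Your account is in fact slightly more explicit than the paper's in two places: you spell out the scale-invariance step $\Omega(\epsilon z,\epsilon w)=\Omega(z,w)$ that reduces $\epsilon\mathbb{Z}^2$ to $\mathbb{Z}^2$ (the paper simply asserts the constant is independent of $\varepsilon$), and you make the limiting mechanism precise by passing through $L_2$-convergence and convergence in measure before invoking lower semicontinuity.
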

\begin{proof} 
Let $E$ be an arbitrary spectral measure on $\mathbb{C}.$ Fix $\varepsilon > 0$ and consider the partition of $\mathbb{C}$ into disjoint $\varepsilon$-squares
\[
\mathbb{C} = \bigcup_{k \in \mathbb{Z}^2} Q_k^\varepsilon, \quad Q_k^\varepsilon := \varepsilon k + \varepsilon [0,1)^2.
\]
We define an approximating discrete spectral measure $E^\varepsilon$ supported on $\varepsilon \mathbb{Z}^2$ by setting
\[
E^\varepsilon(\{\varepsilon k\}) := E(Q_k^\varepsilon), \quad \text{for all } k \in \mathbb{Z}^2.
\]
Since $E^\varepsilon$ is supported on a countable set, by the previous lemma, the double operator integral
\[
T^{E^\varepsilon}_\Omega(x) := \sum_{z,w\in \varepsilon \mathbb{Z}^2} \Omega(z,w) \, E^\varepsilon(\{z\}) \, x \, E^\varepsilon(\{w\})
\]
defines a bounded operator from $L_1(\mathcal{M},\tau)$ to $L_{1,\infty}(\mathcal{M},\tau)$ with a constant { independent of $\varepsilon$}, that is
\[
\|T^{E^\varepsilon}_\Omega(x)\|_{1,\infty} \le C \|x\|_1.
\]

%Next, we observe that $E^\varepsilon$ converges weakly to $E$ as $\varepsilon \to 0$, and thus the corresponding operators $T^{E^\varepsilon}_\Omega(x)$ converge weakly-* to $T^E_\Omega(x)$ in the noncommutative Lorentz space $L_{1,\infty}(\mathcal{M},\tau)$ for a dense set of operators $x$ (e.g., of finite rank). Since the $L_{1,\infty}$ quasi-norm is lower semicontinuous with respect to weak-* convergence, it follows that
Set
$$\Omega_\varepsilon(z,w)=\Omega(\varepsilon\lfloor\varepsilon^{-1}z\rfloor,\varepsilon\lfloor\varepsilon^{-1}w\rfloor),\quad z,w\in\mathbb{C}.$$
Note that (see e.g. \cite[Lemma 8]{PS-2009})$$T_\Omega^{E^\varepsilon}=T_{\Omega_\varepsilon}^E.$$
We claim that
$$\Omega_{\varepsilon}\to\Omega,\quad \varepsilon\downarrow 0$$ pointwise. 
Indeed, if $z,w\in\mathbb{C}$ are such that $z\neq w,$ then
$$\varepsilon\lfloor\varepsilon^{-1}z\rfloor\to z,\quad \varepsilon\lfloor\varepsilon^{-1}w\rfloor\to w$$
and, therefore,
$$\varepsilon\lfloor\varepsilon^{-1}z\rfloor-\varepsilon\lfloor\varepsilon^{-1}w\rfloor\to z-w\neq0.$$
This immediately yields the claim.

Since the
quasi-norm in $L_{1,\infty}(\mathcal{M},\tau)$ is a Fatou quasi-norm (\cite{LSZ-book}), { it follows that}
\[
\|T^E_\Omega(x)\|_{1,\infty} \le \liminf_{\varepsilon \to 0} \|T^{E^\varepsilon}_\Omega(x)\|_{1,\infty} \le C \|x\|_1, \quad x\in (L_{1}\cap L_2)(\mathcal{M},\tau).
\]
Therefore, $T^E_\Omega$ extends to a bounded operator from $L_1(\mathcal{M},\tau)$ to $L_{1,\infty}(\mathcal{M},\tau)$ for arbitrary spectral measure $E$ on $\mathbb{C}.$
\end{proof}

}

{

%{ In what follows, we identify $\mathbb{C}$ with $\mathbb{R}^2$, and we will not distinguish between spectral measures defined on either space. }
\begin{lem}\label{extrapolation lemma}  For the function $\Omega$ defined by \eqref{eq_def_Omega} and the Calderón operator $S$, there is a universal constant $c_{abs}>0$ such that the inequality 
$$\mu(T^E_{\Omega}(V))\leq c_{{\rm abs}}S\mu(V)$$ holds for every spectral measure $E$ on $\mathbb{C}$ and every $V\in \Lambda_{{\rm log}}(\mathcal{M},\tau)$.
\end{lem}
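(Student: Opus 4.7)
I will combine the previous lemma (weak-$(1,1)$ boundedness of $T^E_\Omega$ with constant independent of $E$) with three additional ingredients: (a) the trivial $L_2$-boundedness of $T^E_\Omega$, since $|\Omega|\le 1$ off the diagonal gives $\|T^E_\Omega\|_{L_2\to L_2}\le 1$ by the general DOI theory; (b) an analogous weak-$(1,1)$ bound for the $L_2$-adjoint $T^E_{\bar\Omega}$; and (c) a Calder\'on--Mityagin style extrapolation. For (b), a direct calculation gives $\Omega(z,w)=\Omega(w,z)$ and $\overline{\Omega(z,w)}=\Omega(\bar z,\bar w)$, so setting $\sigma(z)=\bar z$ in Lemma~\ref{lem_substitution} yields $T^E_{\bar\Omega}=T^{E\circ\sigma}_\Omega$, and the previous lemma applied to the spectral measure $E\circ\sigma$ supplies the required dual weak-$(1,1)$ estimate with the same constant.

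\textbf{Step 1: $L_p$ bounds.} Combining the weak-$(1,1)$ bound with (a) via the noncommutative Marcinkiewicz theorem, I obtain $T^E_\Omega:L_p(\mathcal{M},\tau)\to L_p(\mathcal{M},\tau)$ boundedly for $1<p\le 2$ with constant of order $(p-1)^{-1}$. Taking adjoints and using the corresponding bound for $T^E_{\bar\Omega}$ established in (b) extends the estimate to $2\le p<\infty$ with constant of order $p$.

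\textbf{Step 2: Pointwise domination.} For $V\in\Lambda_{\log}(\mathcal{M},\tau)$ and $t>0$, set $e=\chi_{(\mu(t,V),\infty)}(|V|)$ and write $V=V_1+V_2$ with $V_1=Ve$, $V_2=V(1-e)$. The weak-$(1,1)$ bound applied to $V_1\in L_1$ (whose $L_1$-norm is at most $\int_0^t\mu(s,V)\,ds$) gives $\mu(t/2,T^E_\Omega V_1)\lesssim C\mu(V)(t)$. For the ``bounded'' part $V_2$, I further decompose $V_2=\sum_{k\ge 0}v_k$ over the dyadic spectral layers of $|V|$ between $\mu(2^{k+1}t,V)$ and $\mu(2^k t,V)$, and apply the $L_{p_k}$-bound from Step~1 on each slice with a $k$-dependent exponent $p_k$. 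Using the iterated version of the triangle inequality \eqref{singular-triangle}, namely $\mu(\sum_k t_k,\sum_k w_k)\le\sum_k\mu(t_k,w_k)$ for any scales $t_k>0$, I choose $t_k$ and $p_k$ so that $\sum_k t_k\le t/2$ and $\mu(t_k,T^E_\Omega v_k)\lesssim\mu(2^k t,V)$; the resulting series $\sum_{k\ge 0}\mu(2^k t,V)$ is then dominated by $C'\mu(V)(t)$. Adding the two contributions via a final application of \eqref{singular-triangle} yields $\mu(t,T^E_\Omega V)\le c_{\rm abs}\,S\mu(V)(t)$.

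\textbf{Main obstacle.} The delicate point is the treatment of $V_2$: a naive application of weak-$(1,1)$ to the dyadic slices produces divergent sums, so one must invest the strength of the $L_p$-bound and calibrate its growth $C_p\lesssim p$ against the dyadic scales $2^k t$ so that the series telescopes to precisely $C'\mu(V)(t)$. Here the dual weak-$(1,1)$ bound obtained from (b) is essential, since it is exactly what unlocks the $L_p$-estimates on the range $2\le p<\infty$ with tractable constants; without it one would only control $V_2$ through the weak-$(1,1)$ bound, which is insufficient.
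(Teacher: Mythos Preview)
Your overall plan through Step~1 matches the paper's: the paper also deduces the weak-$(1,1)$ bound for $T^E_{\bar\Omega}$ via the substitution $\bar\Omega=\Omega\circ{\rm bar}$ (Lemma~\ref{lem_substitution}), interpolates with the trivial $L_2$ bound to obtain $\|T^E_{\Re\Omega}\|_{L_p\to L_p},\|T^E_{\Im\Omega}\|_{L_p\to L_p}\lesssim p'$ for $1<p\le2$, and then extrapolates. The only structural difference is that the paper splits $\Omega=\Re\Omega+i\Im\Omega$ and invokes \cite[Theorem~14(i)]{STZ} as a black box for the extrapolation step, whereas you try to carry it out by hand via a dyadic decomposition of $V_2$. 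That attempt is where the argument breaks.

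Your Step~2 asserts a choice of $(t_k,p_k)$ with $\sum_k t_k\le t/2$ and $\mu(t_k,T^E_\Omega v_k)\lesssim\mu(2^k t,V)$ with an absolute implied constant; no such choice exists under the available $L_p$ estimates. From $\|v_k\|_p\le\mu(2^kt,V)(2^{k+1}t)^{1/p}$ and $\|T^E_\Omega\|_{L_p\to L_p}\le Cp$ (for $p\ge2$), Chebyshev gives only
\[
\mu(t_k,T^E_\Omega v_k)\le C\,p_k\Big(\frac{2^{k+1}t}{t_k}\Big)^{1/p_k}\mu(2^kt,V),
\]
and requiring the prefactor to be bounded by a fixed constant $A$ forces $t_k\ge 2^{k+1}t\cdot\min_{p\ge2}(Cp/A)^p$; since $\min_{p\ge2}(Cp/A)^p>0$ depends only on $A$ and $C$, the series $\sum_k t_k$ diverges for every $A$. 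The best your scheme can deliver is $\mu(t_k,T^E_\Omega v_k)\lesssim k\,\mu(2^kt,V)$, and $\sum_{k\ge0}k\,\mu(2^kt,V)$ is \emph{not} controlled by $S\mu(V)(t)$: for $\mu(s,V)=(\log(e+s))^{-2}\in\Lambda_{\log}$ one has $S\mu(V)(t)<\infty$ for every $t>0$, while $\sum_k k\,\mu(2^kt,V)\asymp\sum_k k^{-1}=\infty$. The passage from ``$\|T\|_{L_p\to L_p}\lesssim p'$ for $1<p\le2$'' to ``$\mu(TV)\le c_{\rm abs}S\mu(V)$'' is precisely the content of \cite[Theorem~14(i)]{STZ}, and its proof is more delicate than a layer-by-layer Chebyshev bound; you should either cite that result (as the paper does) or replace Step~2 by a correct extrapolation argument.
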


%By the preceding lemma,
%$$T^E_{\Omega}:L_1(\mathcal{M},\tau)\to L_{1,\infty}(\mathcal{M},\tau)$$
%for every spectral measure $E$ on $\mathbb{C}.$ By interpolation,
%$$T^E_{\Omega}:L_p(\mathcal{M},\tau)\to L_p(\mathcal{M},\tau)$$
%and the norm is controlled by $\max\{p,\frac{p}{p-1}\}.$ Using extrapolation  theorem in either \cite{STZ} or \cite{JSWZ}, we complete the proof.

\begin{proof} By the preceding lemma, the operator
$$T^E_{\Omega}:L_1(\mathcal{M},\tau)\to L_{1,\infty}(\mathcal{M},\tau)$$
 is bounded for every spectral measure $E$ on $\mathbb{C}.$ Let ${\rm bar}:\mathbb{C}\to\mathbb{C}$ be the complex conjugation. Note that $\overline{\Omega}=\Omega\circ{\rm bar}$ and, therefore (see Lemma \ref{lem_substitution}), $$T^E_{\overline{\Omega}}=T^{E\circ{\rm bar}}_{\Omega}.$$
Hence, the operator
$$T^E_{\overline{\Omega}}:L_1(\mathcal{M},\tau)\to L_{1,\infty}(\mathcal{M},\tau)$$
is also bounded for every spectral measure $E$ on $\mathbb{C}.$ Thus, the operators
$$T^E_{\Re(\Omega)},T^E_{\Im(\Omega)}:L_1(\mathcal{M},\tau)\to L_{1,\infty}(\mathcal{M},\tau)$$ are bounded
for every spectral measure $E$ on $\mathbb{C}$ as linear combinations of bounded operators. Since $\|\Re(\Omega)\|_\infty,$ $\|\Im(\Omega)\|_\infty<\infty$, it follows that
$$T^E_{\Re(\Omega)},T^E_{\Im(\Omega)}:L_2(\mathcal{M},\tau)\to L_2(\mathcal{M},\tau)$$ are bounded
for every spectral measure $E$ on $\mathbb{C}.$ By interpolation (see \cite[Theorem 4.8]{Dirksen2015}), the operators
$$T^E_{\Re(\Omega)},T^E_{\Im(\Omega)}:L_p(\mathcal{M},\tau)\to L_p(\mathcal{M},\tau),\quad 1<p\leq 2,$$ are bounded
and the norm is controlled by $p'.$ Using Theorem 14 (i)  in \cite{STZ}, we obtain the estimates $$\mu(T^E_{\Re(\Omega)}(V))\leq c_{{\rm abs}}'S\mu(V), \quad \mu(T^E_{\Im(\Omega)}(V))\leq c_{{\rm abs}}''S\mu(V),$$ where $S$ is defined by \eqref{eq_def_Cald}.
Next, due to \cite[Lemma 2.3.12 (d) and Corollary 2.3.16 (a)]{LSZ-book}, we have that 
\begin{multline*}
    \mu(T^E_{\Omega}(V))= \mu(T^E_{\Re(\Omega)+i\Im(\Omega)}(V))\le D_2\mu(T^E_{\Re(\Omega)}(V))+D_2 \mu(T^E_{\Im(\Omega)}(V))\\ \leq (c_{{\rm abs}}'+c_{{\rm abs}}'')D_2S\mu(V)\le   c_{{\rm abs}}S\mu(V),
\end{multline*} where $c_{\rm abs}=c_{{\rm abs}}'+c_{{\rm abs}}''.$
\end{proof}

\begin{lem}\label{lem_one side} If $X\in \mathcal{S}_0(\mathcal{M},\tau)$ is normal and if $Y\in\mathcal{M}$ are such that $[X,Y]\in\Lambda_{{\rm log}}(\mathcal{M},\tau),$ then
$$\mu([X^{\ast},Y])\leq c_{{\rm abs}}S\mu([X,Y]).$$
\end{lem}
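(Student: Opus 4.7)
The strategy is to realize $[X^*, Y]$ as the image of $[X, Y]$ under the double operator integral $T^E_\Omega$, where $E$ is the spectral measure of $X$ on $\mathbb{C}$, and then to invoke Lemma \ref{extrapolation lemma}. Heuristically, by \eqref{eq_u-v_phi}--\eqref{eq_u-v_psi}, $[X, Y]$ and $[X^*, Y]$ correspond to DOIs with symbols $z-w$ and $\overline{z}-\overline{w}$; since $\overline{z}-\overline{w} = \Omega(z,w)(z-w)$, the homomorphism property \eqref{eq_hom} formally gives $[X^*, Y] = T^E_\Omega([X, Y])$. The proof amounts to making this identity rigorous via spectral truncation of the unbounded $X$ and accounting for its possible kernel.

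First, I introduce the spectral cut-offs $p_n := E(A_n)$ with $A_n := \{z \in \mathbb{C} : 1/n \le |z| \le n\}$. The hypothesis $X \in \mathcal{S}_0(\mathcal{M}, \tau)$ ensures $\tau(p_n) < \infty$, and $p_n$ increases strongly to $\mathbf{1} - q$ where $q := E(\{0\})$. Setting $\varphi_n(z, w) := (z-w)\chi_{A_n}(z)\chi_{A_n}(w)$ and $\psi_n(z, w) := (\overline{z}-\overline{w})\chi_{A_n}(z)\chi_{A_n}(w)$, formulas \eqref{eq_u-v_phi}--\eqref{eq_u-v_psi} together with the fact that $p_n$ commutes with $X$ yield
\[
T^E_{\varphi_n}(p_n Y p_n) = p_n [X, Y] p_n, \qquad T^E_{\psi_n}(p_n Y p_n) = p_n [X^*, Y] p_n.
\]
Since $\tau(p_n) < \infty$ forces $p_n Y p_n \in L_2(\mathcal{M}, \tau)$, the $L_2$-homomorphism property \eqref{eq_hom} combined with the pointwise equality $\psi_n = \Omega \cdot \varphi_n$ yields the rigorous identity $p_n[X^*, Y]p_n = T^E_\Omega(p_n[X, Y]p_n)$, and Lemma \ref{extrapolation lemma} together with the contraction $\mu(p_n V p_n) \le \mu(V)$ then gives, uniformly in $n$,
\[
\mu(p_n [X^*, Y] p_n) \le c_{\mathrm{abs}} \, S\mu([X, Y]).
\]

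The limit $n \to \infty$ is passed using continuity of two-sided multiplication by uniformly bounded, strongly convergent nets of projections in the measure topology on $\mathcal{S}(\mathcal{M}, \tau)$ together with the lower semicontinuity of $\mu$, giving $\mu((\mathbf{1}-q)[X^*, Y](\mathbf{1}-q)) \le c_{\mathrm{abs}} S\mu([X, Y])$. When $q = 0$ (that is, $X$ injective) this is already the conclusion. In general, normality of $X$ forces $Xq = qX = X^*q = qX^* = 0$, so $q[X^*, Y]q = 0$ and one has the decomposition
\[
[X^*, Y] = (\mathbf{1} - q)[X^*, Y](\mathbf{1} - q) + X^*Yq - qYX^*.
\]
The key observation for the cross terms is that $X^*X = XX^*$ yields $|X^*Yq|^2 = qY^*XX^*Yq = qY^*X^*XYq = |XYq|^2$, whence $\mu(X^*Yq) = \mu(XYq) = \mu([X, Y]q) \le \mu([X, Y])$; an analogous computation applied to $Y^*$ (together with $\mu(A) = \mu(A^*)$ and $[X^*, Y^*]^* = -[X, Y]$) delivers $\mu(qYX^*) \le \mu([X, Y])$. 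Combining the three pieces via \eqref{singular-triangle} at $3t$, using $\mu \le S\mu$ to bound the cross terms, and absorbing the resulting dilation through the elementary estimate $Sf(t/3) \le C_0 Sf(t)$ for non-negative non-increasing $f$ delivers the desired $\mu([X^*, Y]) \le c'_{\mathrm{abs}} S\mu([X, Y])$. The principal technical obstacle is the combined limit passage and kernel handling: the DOI identity controls only the main piece $(\mathbf{1} - q)[X^*, Y](\mathbf{1} - q)$, and the normality-driven identities $|XYq| = |X^*Yq|$ and $|XY^*q| = |X^*Y^*q|$ are what keeps the cross-term losses bounded by a constant multiple of $\mu([X, Y])$.
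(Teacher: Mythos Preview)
Your proof is correct and follows essentially the same line as the paper's: spectral truncation by $p_n=E(A_n)$, the identity $p_n[X^*,Y]p_n=T^E_\Omega(p_n[X,Y]p_n)$ obtained from $\psi_n=\Omega\cdot\phi_n$ and the homomorphism property \eqref{eq_hom}, then Lemma~\ref{extrapolation lemma} and a passage to the limit. You are in fact more careful than the paper on one point: the paper simply writes $p_n\uparrow{\rm id}_{\mathcal M}$ and concludes, whereas you correctly observe that $p_n\uparrow\mathbf{1}-q$ with $q=E(\{0\})$ and supply the additional (valid) argument bounding the cross terms via the normality identity $|X^*Yq|=|XYq|$ together with $Sf(t/3)\le C_0\,Sf(t)$.
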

\begin{proof} To see this, let $E$ be the spectral measure of $X.$ Fix $n\in\mathbb{N}$ and consider the annulus $A_n=\{\frac1n<|z|<n\}.$ Let $p_n=E(A_n).$ Note that $p_n$ is $\tau$-finite.
	
Applying \eqref{eq_u-v_phi} and \eqref{eq_u-v_psi}, we obtain that 
$$p_n[X,Y]p_n=[X,p_nYp_n]=T^E_{\phi_n}(p_nYp_n),$$
$$p_n[X^{\ast},Y]p_n=[X^{\ast},p_nYp_n]=T^E_{\psi_n}(p_nYp_n),$$
where
$$\phi_n(z,w)=(z-w)\chi_{A_n}(z)\chi_{A_n}(w),\quad \psi_n(z,w)=(\overline{z}-\overline{w})\chi_{A_n}(z)\chi_{A_n}(w),\quad z,w\in\mathbb{C}.$$
It is immediate that $\psi_n=\Omega\cdot\phi_n$, and so by \eqref{eq_hom}, we have $T^E_{\psi_n}=T^E_{\Omega}\circ T^E_{\phi_n}.$
Thus,
$$p_n[X^{\ast},Y]p_n=T^E_{\Omega}(T^E_{\phi_n}(p_n Yp_n))={ T^E_{\Omega}(p_n[X,Y]p_n)}.$$
By Lemma \ref{extrapolation lemma}, we obtain the estimate 
$$\mu(p_n[X^{\ast},Y]p_n)\leq c_{{\rm abs}}S\mu(p_n[X,Y]p_n)\leq c_{{\rm abs}}S\mu([X,Y]).$$
Since $[X^{\ast},Y]\in \mathcal{S}(\mathcal{M},\tau)$ and since $p_n\uparrow {\rm id}_{\mathcal{M}}$ as $n\to\infty,$ { the assertion follows}.
\end{proof}

}

\section{Proof of necessity in the case of factors of types I$_{\infty}$ and II$_{\infty}$}\label{sec_infinite factors}

In this section we prove that the conditions stated in Theorem \ref{MainThm_OpSp} are necessary in the case of factors of types I$_{\infty}$ and II$_{\infty}$. { Through the paper $E_{k,k}\in M_n(\mathbb{C})$, $1\le k\le n,$, $n\in\mathbb{N}$, is the standard matrix unit with a $1$ in the $(k,k)$-entry and $0$ elsewhere and for $A \in M_n(\mathbb{C})$ we set $${\rm diag}(A)=\sum_{k=1}^nE_{k,k}AE_{k,k}.$$}

We need the following technical result, which can be found in \cite[Lemma 6.1]{DdPS2016}. For the reader’s convenience, we include the proof.

\begin{lem}\label{peter lemma} Let $(\mathcal{M},\tau)$ be a semifinite von Neumann algebra with a faithful normal semifinite trace $\tau$. Let $E$ be a symmetric function or sequence space. Then we have
$$\|{\rm diag}(A)\otimes x\|_{E(B(\ell_2)\overline{\otimes}\mathcal{M},{\rm Tr}\otimes\tau)}\leq \|A\otimes x\|_{E(B(\ell_2)\overline{\otimes}\mathcal{M},{\rm Tr}\otimes\tau)}$$
for every $x\in E(\mathcal{M},\tau)$ and for every $A\in B(\ell_2)$ having only finitely many non-zero matrix elements.
\end{lem}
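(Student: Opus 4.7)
The plan is to exhibit the diagonal truncation as an average of unitary conjugations on the first tensor factor, and then invoke the unitary invariance of symmetric norms together with the triangle inequality.

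Since $A$ has only finitely many non-zero matrix entries, there exists $n\in\mathbb{N}$ such that $A = P_n A P_n$, where $P_n$ is the orthogonal projection of $\ell_2$ onto the span of the first $n$ standard basis vectors. Consequently, ${\rm diag}(A)$ is also supported in this $n\times n$ block. First, I would introduce the unitary
$$U_n=\operatorname{diag}(1,\omega,\omega^2,\ldots,\omega^{n-1},1,1,\ldots)\in B(\ell_2),\qquad \omega=e^{2\pi i/n},$$
which is the identity on $(1-P_n)\ell_2$. A direct computation on matrix entries, using that $(U_n^k A U_n^{-k})_{ij} = \omega^{k(i-j)}A_{ij}$ for $1\le i,j\le n$ and the orthogonality relation $\frac{1}{n}\sum_{k=0}^{n-1}\omega^{k(i-j)}=\delta_{ij}$, gives
$$\operatorname{diag}(A)=\frac{1}{n}\sum_{k=0}^{n-1} U_n^{k}\,A\,U_n^{-k}.$$

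Tensoring with $x\in E(\mathcal{M},\tau)$ and noting that $U_n^k\otimes 1$ is a unitary in $B(\ell_2)\overline{\otimes}\mathcal{M}$, I obtain
$$\operatorname{diag}(A)\otimes x=\frac{1}{n}\sum_{k=0}^{n-1}(U_n^{k}\otimes 1)(A\otimes x)(U_n^{-k}\otimes 1).$$
Now I would invoke the fact that the symmetric norm on $E(B(\ell_2)\overline{\otimes}\mathcal{M},{\rm Tr}\otimes\tau)$ is unitarily invariant, since $\mu(UyU^{*})=\mu(y)$ for any unitary $U$ and any $\tau$-measurable operator $y$. Hence each summand has the same norm as $A\otimes x$, and the triangle inequality yields
$$\|\operatorname{diag}(A)\otimes x\|_{E}\le \frac{1}{n}\sum_{k=0}^{n-1}\|(U_n^{k}\otimes 1)(A\otimes x)(U_n^{-k}\otimes 1)\|_{E}=\|A\otimes x\|_{E},$$
which is the desired inequality.

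There is no essential obstacle here: once the averaging identity $\operatorname{diag}(A)=\frac{1}{n}\sum_{k=0}^{n-1}U_n^{k}AU_n^{-k}$ is in hand, the result is a direct consequence of unitary invariance of symmetric norms. The only minor point to handle carefully is ensuring that $U_n$ is extended by the identity outside the support of $A$, so that it is a genuine unitary on all of $\ell_2$ (and thus that $U_n^k\otimes 1$ is unitary in the ambient algebra $B(\ell_2)\overline{\otimes}\mathcal{M}$), which the construction above accomplishes.
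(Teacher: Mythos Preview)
Your proof is correct and follows essentially the same approach as the paper: express $\operatorname{diag}(A)$ as an average of unitary conjugates of $A$, then apply the triangle inequality together with unitary invariance of the symmetric norm. The only cosmetic difference is the choice of averaging unitaries---you use the cyclic group generated by $\operatorname{diag}(1,\omega,\ldots,\omega^{n-1})$, whereas the paper uses the sign group $U_\varepsilon=\sum_{k=1}^n\varepsilon_k E_{k,k}$ for $\varepsilon\in\{-1,1\}^n$; both yield the same averaging identity and the argument proceeds identically.
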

\begin{proof} Let $A=(a_{k,\ell})_{k,\ell\geq1}.$ Fix $n\in\mathbb{N}$ such that $a_{k,\ell}=0$ if either $k>n$ or $\ell>n.$ 
	
For every $\varepsilon=\{\varepsilon_1,\cdots,\varepsilon_n\}\in\{-1,1\}^n,$ we set
$$U_{\varepsilon}=\sum_{k=1}^n\varepsilon_kE_{k,k}.$$ 
We have
\begin{align*}
    \sum_{\varepsilon\in \{-1,1\}^n}U_{\varepsilon}AU_{\varepsilon}&=\sum_{\varepsilon\in \{-1,1\}^n}\Big(\sum_{1\leq k,\ell\leq n}\varepsilon_k\varepsilon_\ell E_{k,k}AE_{\ell,\ell}\Big)\\
&=\sum_{1\leq k,\ell\leq n}\Big(\sum_{\varepsilon\in \{-1,1\}^n}\varepsilon_k\varepsilon_\ell\Big)E_{k,k}AE_{\ell,\ell}.\end{align*}
For $k\neq \ell,$ we have
\begin{equation*}
    \sum_{\varepsilon\in \{-1,1\}^n}\varepsilon_k\varepsilon_\ell=0.
\end{equation*}
For $k=\ell,$ we have
\begin{equation*}\sum_{\varepsilon\in \{-1,1\}^n}\varepsilon_k\varepsilon_\ell=2^n.\end{equation*}
Thus,
\begin{equation*}\sum_{\varepsilon\in \{-1,1\}^n}U_{\varepsilon}AU_{\varepsilon}=2^n\sum_{k=1}^nE_{k,k}AE_{k,k}=2^n{\rm diag}(A).\end{equation*}
Finally, since $U_\varepsilon$ is unitary, it follows that
\begin{align*}
    \|{\rm diag}(A)\otimes x\|_{E(B(\ell_2)\overline{\otimes}\mathcal{M},{\rm Tr}\otimes\tau)}&=2^{-n}\Big\|\Big(\sum_{\varepsilon\in \{-1,1\}^n}U_{\varepsilon}AU_{\varepsilon}\Big)\otimes x\Big\|_{E(B(\ell_2)\overline{\otimes}\mathcal{M},{\rm Tr}\otimes\tau)} \\ & \leq 2^{-n}\sum_{\varepsilon\in \{-1,1\}^n}\|U_{\varepsilon}AU_{\varepsilon}\otimes x\|_{E(B(\ell_2)\overline{\otimes}\mathcal{M},{\rm Tr}\otimes\tau)}\\ &=2^{-n} \cdot 2^n \cdot\|A\otimes x\|_{E(B(\ell_2)\overline{\otimes}\mathcal{M},{\rm Tr}\otimes\tau)}\\ &=\|A\otimes x\|_{E(B(\ell_2)\overline{\otimes}\mathcal{M},{\rm Tr}\otimes\tau)}, 
\end{align*} which completes the proof.
\end{proof}

\begin{defi}\label{def_i_n} Define the linear mapping
$$\iota_n:M_n(\mathbb{C})\to M_{n+1}(\mathbb{C})\otimes M_{n+1}(\mathbb{C})$$
by the formula
$$\iota_n\Big(\sum_{k,\ell=1}^na_{k,\ell}E_{k,\ell}\Big)=\sum_{k,\ell=1}^na_{k,\ell}E_{1,\ell+1}\otimes E_{k+1,1},\quad A=(a_{k,\ell})_{k,\ell=1}^n\in M_n(\mathbb{C}).$$
\end{defi}

\begin{lem}\label{iotan lemma} We have
$$|\iota_n(A)|=U|A|U^{\ast}\otimes E_{1,1},\quad A\in M_n(\mathbb{C}).$$
Here, $U=\sum_{k=1}^nE_{k+1,k}.$
\end{lem}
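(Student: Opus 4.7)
The strategy is to compute $|\iota_n(A)|^2 = \iota_n(A)^{\ast}\iota_n(A)$ directly, recognize the conjugation by $U$ on the first tensor factor, and then appeal to uniqueness of the positive square root.

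First, I would write $A = (a_{k,\ell})_{k,\ell=1}^n$ and use the definition of $\iota_n$ together with the matrix unit identities $E_{i,j}E_{p,q} = \delta_{j,p}E_{i,q}$. This gives
$$\iota_n(A)^{\ast}\iota_n(A) = \sum_{k,k',\ell,\ell'=1}^n \overline{a_{k,\ell}}\,a_{k',\ell'}\bigl(E_{\ell+1,1}E_{1,\ell'+1}\bigr)\otimes\bigl(E_{1,k+1}E_{k'+1,1}\bigr),$$
and the two factors collapse to $E_{\ell+1,\ell'+1}$ and $\delta_{k,k'}E_{1,1}$ respectively. Summing over $k=k'$ leaves
$$\iota_n(A)^{\ast}\iota_n(A) = \Bigl(\sum_{k,\ell,\ell'} \overline{a_{k,\ell}}\,a_{k,\ell'}\,E_{\ell+1,\ell'+1}\Bigr)\otimes E_{1,1}.$$

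Next, I would compute how $U=\sum_{k=1}^n E_{k+1,k}$ acts by conjugation on matrix units: a direct calculation gives $U E_{\ell,\ell'}U^{\ast} = E_{\ell+1,\ell'+1}$ for $1\le \ell,\ell'\le n$. Since $A^{\ast}A = \sum_{k,\ell,\ell'}\overline{a_{k,\ell}}\,a_{k,\ell'}\,E_{\ell,\ell'}$, this yields
$$\iota_n(A)^{\ast}\iota_n(A) = U\,|A|^2\,U^{\ast}\otimes E_{1,1}.$$

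Finally, I would verify that $T:=U|A|U^{\ast}\otimes E_{1,1}$ is positive (as the tensor product of two positive operators) and that $T^2$ equals the expression above. For this I use $U^{\ast}U = \sum_{k=1}^n E_{k,k} = I_n$ (note that it is \emph{this} identity, rather than $UU^{\ast}$, that is in play here) together with $E_{1,1}^2 = E_{1,1}$ to obtain
$$T^2 = (U|A|U^{\ast}U|A|U^{\ast})\otimes E_{1,1} = U|A|^2 U^{\ast}\otimes E_{1,1} = \iota_n(A)^{\ast}\iota_n(A).$$
By uniqueness of the positive square root in a $C^{\ast}$-algebra, $|\iota_n(A)| = T = U|A|U^{\ast}\otimes E_{1,1}$, as claimed.

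There is no real obstacle; the only point that requires a little care is keeping track of which of $U^{\ast}U$ and $UU^{\ast}$ is the identity (it is $U^{\ast}U$, because $U$ is an isometric shift from $\mathbb{C}^n$ into the span of $\{e_2,\dots,e_{n+1}\}\subset\mathbb{C}^{n+1}$), and correctly handling the index shift in $U E_{\ell,\ell'}U^{\ast}=E_{\ell+1,\ell'+1}$.
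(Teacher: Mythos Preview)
Your proposal is correct and follows essentially the same route as the paper: compute $\iota_n(A)^{\ast}\iota_n(A)$ via the matrix unit relations, identify the result as $U|A|^2U^{\ast}\otimes E_{1,1}$, and then use $U^{\ast}U=I_n$ together with uniqueness of the positive square root. Your explicit remark that $T=U|A|U^{\ast}\otimes E_{1,1}$ is positive, and your care in distinguishing $U^{\ast}U$ from $UU^{\ast}$, are in fact slightly cleaner than the paper's presentation.
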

\begin{proof} Indeed, letting $A=(a_{k,\ell})_{k,\ell=1}^n\in M_n(\mathbb{C})$, we have
\begin{equation}\label{eq_i_n_A}
\begin{aligned}
|\iota_n(A)|^2&=\Big(\sum_{k_1,\ell_1=1}^n\overline{a_{k_1,\ell_1}}E_{\ell_1+1,1}\otimes E_{1,k_1+1}\Big)\\& \qquad\qquad \times\Big(\sum_{k_2,\ell_2=1}^na_{k_2,\ell_2}E_{1,\ell_2+1}\otimes E_{k_2+1,1}\Big)\\ &=\sum_{k_1,\ell_1,k_2,\ell_2=1}^n\overline{a_{k_1,\ell_1}}a_{k_2,\ell_2}(E_{\ell_1+1,1}E_{1,\ell_2+1}\otimes E_{1,k_1+1}E_{k_2+1,1})\\ &=\sum_{k,\ell_1,\ell_2=1}^n\overline{a_{k,\ell_1}}a_{k,\ell_2}E_{\ell_1+1,\ell_2+1}\otimes E_{1,1}\\ &=\sum_{\ell_1,\ell_2=1}^n(A^{\ast}A)_{\ell_1,\ell_2}E_{\ell_1+1,\ell_2+1}\otimes E_{1,1}=U|A|^2U^{\ast}\otimes E_{1,1},
\end{aligned}
\end{equation}
where the latter equality is due to \begin{align*}U A U^*&=\Big(\sum_{\ell_1=1}^nE_{\ell_1+1,\ell_1}\Big)\cdot \Big(\sum_{k,\ell=1}^n a_{k,\ell}E_{k,\ell}\Big)\cdot \Big(\sum_{\ell_2=1}^nE_{\ell_2,\ell_2+1}\Big)\\&= \sum_{k,\ell,\ell_1,\ell_2=1}^n a_{k,\ell}E_{\ell_1+1,\ell_1}E_{k,\ell}E_{\ell_2,\ell_2+1}=\sum_{\ell,\ell_1,\ell_2=1}^n a_{\ell_1,\ell}E_{\ell_1+1,\ell}E_{\ell_2,\ell_2+1}\\&=\sum_{\ell_1,\ell_2=1}^n a_{\ell_1,\ell_2}E_{\ell_1+1,\ell_2+1}.\end{align*} Finally, since $U$ is a { partial isometry such that $U^*U=I_n$, it follows that $$(U|A|U^{\ast})^2=U|A|U^{\ast}U|A|U^{\ast}=U|A|^2U^{\ast}.$$ Using also that $E_{1,1}^2=E_{1,1}$ and taking square root of both sides of \eqref{eq_i_n_A},} we obtain the asserted identity.
\end{proof}

Throughout the rest of this section, $T$ denotes  the \textit{antisymmetric triangular truncation operator} (defined on finite complex matrices), that is,
$$T\Big(\sum_{k,\ell\geq1}a_{k,\ell}E_{k,\ell}\Big)=\sum_{k,\ell\geq1}{\rm sgn}(k-\ell)a_{k,\ell}E_{k,\ell}.$$

\begin{lem}\label{commutator limit lemma}Let $n\ge 1$. For every $A\in M_n(\mathbb{C}),$ there exist sequences $\{A^n_m\}_{m\geq1},$ $\{B_m^n\}_{m\geq1}\subset M_{n+1}(\mathbb{C})\otimes M_{n+1}(\mathbb{C})$ such that
\begin{itemize}
\item $A^n_m$ is normal for every $m\geq1;$
\item $[A^n_m,B^n_m]=\iota_n(A-{\rm diag}(A))$ for every $m\geq 1;$
\item $[(A^n_m)^{\ast},B^n_m]\to \iota_n(T(A))$ as $m\to\infty.$
\end{itemize}	
\end{lem}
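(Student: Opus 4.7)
Since $\iota_n(A)$ is supported on the matrix units $E_{1,\ell+1}\otimes E_{k+1,1}$ ($1\leq k,\ell\leq n$), and commuting a diagonal operator with such a matrix unit merely multiplies it by a scalar, the strategy is to take $A^n_m$ diagonal (hence automatically normal) and then solve the commutator equation entry by entry for $B^n_m$.

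More precisely, I would set
\[
A^n_m=\mathrm{diag}(0,\theta^{(m)}_1,\ldots,\theta^{(m)}_n)\otimes I+I\otimes \mathrm{diag}(0,\eta^{(m)}_1,\ldots,\eta^{(m)}_n)
\]
for two sequences $\{\theta^{(m)}_\ell\},\{\eta^{(m)}_k\}\subset\mathbb{C}$ to be specified, and, writing $A=(a_{k,\ell})_{k,\ell=1}^n$,
\[
B^n_m=\sum_{1\le k\neq\ell\le n}\frac{a_{k,\ell}}{\eta^{(m)}_k-\theta^{(m)}_\ell}\,E_{1,\ell+1}\otimes E_{k+1,1}.
\]
A direct application of the identity $[\mathrm{diag}(\alpha_i),E_{i,j}]=(\alpha_i-\alpha_j)E_{i,j}$ in each tensor factor then yields
\[
[A^n_m,B^n_m]=\sum_{k\neq\ell}a_{k,\ell}\,E_{1,\ell+1}\otimes E_{k+1,1}=\iota_n(A-\mathrm{diag}(A)),
\]
\[
[(A^n_m)^{\ast},B^n_m]=\sum_{k\neq\ell}a_{k,\ell}\,\Omega(\eta^{(m)}_k,\theta^{(m)}_\ell)\,E_{1,\ell+1}\otimes E_{k+1,1},
\]
with $\Omega$ as in \eqref{eq_def_Omega}. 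The first identity verifies the second bullet of the lemma for every $m$; comparing the second with $\iota_n(T(A))$, the third bullet is reduced to the purely scalar convergence $\Omega(\eta^{(m)}_k,\theta^{(m)}_\ell)\to\mathrm{sgn}(k-\ell)$ for each pair $k\neq\ell$.

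All that remains, and this is really the only nontrivial choice, is to pick the scalars realizing the above limit. I propose $\eta^{(m)}_k=m^k\in\mathbb{R}$ and $\theta^{(m)}_\ell=im^\ell\in i\mathbb{R}$, letting $m\to\infty$, so that
\[
\Omega(m^k,im^\ell)=\frac{m^k+im^\ell}{m^k-im^\ell}.
\]
For $k>\ell$ the real term $m^k$ dominates, so $\eta^{(m)}_k-\theta^{(m)}_\ell$ is asymptotically real and the ratio tends to $1$; for $k<\ell$ the imaginary term $-im^\ell$ dominates, so the difference is asymptotically purely imaginary and the ratio tends to $i/(-i)=-1$. The real and imaginary axes provide two orthogonal asymptotic directions, and the geometric separation $m^k\gg m^\ell$ (for $k>\ell$) selects which one dominates; this geometric choice of scalars is the only genuine obstacle, and once it is made the lemma follows.
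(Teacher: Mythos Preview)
Your proof is correct and follows essentially the same construction as the paper's. The paper also takes $A^n_m$ diagonal of the form $D_1\otimes I+I\otimes D_2$ with entries $m^k$ and $im^\ell$ (merely swapping which tensor factor carries the real versus imaginary part, and using $m,\,im$ rather than $0$ for the first diagonal entries), defines $B^n_m$ by exactly your entry-wise division, and verifies the same scalar limit $\Omega(\eta^{(m)}_k,\theta^{(m)}_\ell)\to\mathrm{sgn}(k-\ell)$.
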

\begin{proof} For fixed $n\ge 1$ $m\ge 1$, we set
$$A_m^n=\sum_{1\leq k,\ell\leq n+1}(m^k+im^\ell)E_{k,k}\otimes E_{\ell,\ell},$$
$$B_m^n=\sum_{\substack{2\leq k,\ell\leq n+1\\ k\neq \ell}}\frac1{(m+im^\ell-m^k-im)}a_{\ell-1,k-1}E_{1,k}\otimes E_{\ell,1}.$$
Since $A_m^n$ acts as scalar multiplication on each element of the standard basis  $\{e_k\otimes e_\ell\}$, it is diagonalizable with respect to this basis and hence normal.
	
In order to prove $(ii)$ and $(iii)$, we first find the products
\begin{align*}A_m^nB_m^n&=\sum_{\substack{2\leq k,\ell\leq n+1\\ k\neq \ell\\ 1\leq k',\ell'\leq n+1}}\frac{m^{k'}+im^{\ell'}}{m+im^\ell-m^k-im}a_{\ell-1,k-1}E_{k',k'}E_{1,k}\otimes E_{\ell',\ell'}E_{\ell,1}\\ &
=\sum_{\substack{2\leq k,\ell\leq n+1\\ k\neq \ell}}\frac{m+im^\ell}{m+im^\ell-m^k-im}a_{\ell-1,k-1}E_{1,k}\otimes E_{\ell,1},\end{align*}
\begin{align*}B_m^nA_m^n&=\sum_{\substack{2\leq k,\ell\leq n+1\\ k\neq \ell\\ 1\leq k',\ell'\leq n+1}}\frac{m^{k'}+im^{\ell'}}{m+im^\ell-m^k-im}a_{\ell-1,k-1}E_{1,k}E_{k',k'}\otimes E_{\ell,1}E_{\ell',\ell'}\\ &=\sum_{\substack{2\leq k,\ell\leq n+1\\ k\neq \ell}}\frac{m^k+im}{m+im^\ell-m^k-im}a_{\ell-1,k-1}E_{1,k}\otimes E_{\ell,1},\end{align*}
\begin{align*}(A_m^n)^{\ast}B_m^n&=\sum_{\substack{2\leq k,\ell\leq n+1\\ k\neq \ell\\ 1\leq k',\ell'\leq n+1}}\frac{m^{k'}-im^{\ell'}}{m+im^\ell-m^k-im}a_{\ell-1,k-1}E_{k',k'}E_{1,k}\otimes E_{\ell',\ell'}E_{\ell,1}\\&
=\sum_{\substack{2\leq k,\ell\leq n+1\\ k\neq \ell}}\frac{m-im^\ell}{m+im^\ell-m^k-im}a_{\ell-1,k-1}E_{1,k}\otimes E_{\ell,1},\end{align*}
\begin{align*}B_m^n(A_m^n)^{\ast}&=\sum_{\substack{2\leq k,\ell\leq n+1\\ k\neq \ell\\ 1\leq k',\ell'\leq n+1}}\frac{m^{k'}-im^{\ell'}}{m+im^\ell-m^k-im}a_{\ell-1,k-1}E_{1,k}E_{k',k'}\otimes E_{\ell,1}E_{\ell',\ell'}\\ &=\sum_{\substack{2\leq k,\ell\leq n+1\\ k\neq \ell}}\frac{m^k-im}{m+im^\ell-m^k-im}a_{\ell-1,k-1}E_{1,k}\otimes E_{\ell,1}.\end{align*}
Thus, we find that {(for $\iota_n$ see Definition \ref{def_i_n})}
$$[A_m^n,B_m^n]=\sum_{\substack{2\leq k,\ell\leq n+1\\ k\neq \ell}}a_{\ell-1,k-1}E_{1,k}\otimes E_{\ell,1}=\iota_n(A-{\rm diag}(A)),$$
$$[(A_m^n)^{\ast},B_m^n]=\sum_{\substack{2\leq k,\ell\leq n+1\\ k\neq \ell}}\frac{m-im^\ell-m^k+im}{m+im^\ell-m^k-im}a_{\ell-1,k-1}E_{1,k}\otimes E_{\ell,1}.$$	
Passing $m\to\infty,$ and taking into account that $$\frac{m-im^\ell-m^k+im}{m+im^\ell-m^k-im}\to{\rm sgn}(k-\ell),\quad m\to\infty,\quad 2\leq k,\ell\leq n+1,\quad k\neq \ell,$$
we obtain that
$$[(A_m^n)^{\ast},B_m^n]\to\sum_{\substack{2\leq k,\ell\leq n+1\\ k\neq \ell}}{\rm sgn}(k-\ell)a_{\ell-1,k-1}E_{1,k}\otimes E_{\ell,1}=\iota_n(T(A)),$$ which completes the proof of the lemma.
\end{proof}

\begin{lem}\label{lem_another side_inf} Let $(\mathcal{M},\tau)$ be a semifinite infinite factor. Let $E$ be a symmetric function or sequence space (depending on whether $\mathcal{M}$ is II$_{\infty}$ or I$_{\infty}$). If
$$\|[X^{\ast},Y]\|_{E(\mathcal{M},\tau)}\leq c_{E,\mathcal{M},\tau}\|[X,Y]\|_{E(\mathcal{M},\tau)}$$
for every normal $X\in E(\mathcal{M},\tau)$ and for every $Y\in \mathcal{M},$ then
$$\|T(A)\otimes x\|_{E(B(\ell_2)\overline{\otimes}\mathcal{M},{\rm Tr}\otimes\tau)}\leq 2c_{E,\mathcal{M},\tau}\|A\otimes x\|_{E(B(\ell_2)\overline{\otimes}\mathcal{M},{\rm Tr}\otimes\tau)}$$
for every $x\in E(\mathcal{M},\tau),$ for every $A\in M_n(\mathbb{C})$ and for every $n\in\mathbb{N}.$
\end{lem}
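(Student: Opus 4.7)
The plan is to use the approximate commutator identities from Lemma \ref{commutator limit lemma} to set up a Fuglede-type inequality and then translate the conclusion back via the embedding $\iota_n$ of Lemma \ref{iotan lemma}. Since $|A\otimes x|=|A|\otimes|x|$, the singular value function (and thus the $E$-norm) of $A\otimes x$ depends only on $|x|$, so it suffices to prove the claim when $x\geq 0$. This reduction is crucial because the normality of $A_m^n\otimes x$ requires $x$ to be normal.

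Fix $A\in M_n(\mathbb{C})$ and $x\geq 0$ in $E(\mathcal{M},\tau)$, and put $\mathcal{N}:=M_{n+1}(\mathbb{C})\otimes M_{n+1}(\mathbb{C})\otimes\mathcal{M}$, which embeds as a trace-preserving von Neumann subalgebra of $B(\ell_2)\overline{\otimes}\mathcal{M}$. Since $\mathcal{M}$ is an infinite semifinite factor, the stable isomorphism $B(\ell_2)\overline{\otimes}\mathcal{M}\cong\mathcal{M}$ (trace-preserving after normalization) induces an isometric isomorphism of the corresponding symmetric operator spaces that preserves normality and commutators. Hence the Fuglede hypothesis on $E(\mathcal{M},\tau)$ transfers, with the same constant $c_{E,\mathcal{M},\tau}$, to a Fuglede inequality on $E(\mathcal{N})$. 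Let $\{A_m^n\},\{B_m^n\}$ be the sequences from Lemma \ref{commutator limit lemma} and set $X_m:=A_m^n\otimes x$ and $Y_m:=B_m^n\otimes 1_{\mathcal{M}}$. Then $X_m$ is normal and belongs to $E(\mathcal{N})$ (as $A_m^n$ has finite rank and $x\in E(\mathcal{M})$), while $Y_m\in\mathcal{N}$ is bounded. Moreover, $[X_m,Y_m]=\iota_n(A-{\rm diag}(A))\otimes x$ and $[X_m^{\ast},Y_m]\to\iota_n(T(A))\otimes x$ in $E(\mathcal{N})$ as $m\to\infty$, the convergence being immediate since the matrix factor is finite-dimensional.

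Applying the transferred Fuglede inequality to $(X_m,Y_m)$ and letting $m\to\infty$ gives
\[
\|\iota_n(T(A))\otimes x\|_E\leq c_{E,\mathcal{M},\tau}\,\|\iota_n(A-{\rm diag}(A))\otimes x\|_E.
\]
By Lemma \ref{iotan lemma}, for every $B\in M_n(\mathbb{C})$ one has $|\iota_n(B)\otimes x|=U|B|U^{\ast}\otimes E_{1,1}\otimes|x|$, whose nonzero singular values coincide with those of $B\otimes x$, so $\|\iota_n(B)\otimes x\|_E=\|B\otimes x\|_E$. Combining this with the triangle inequality and Lemma \ref{peter lemma} yields
\[
\|T(A)\otimes x\|_E\leq c_{E,\mathcal{M},\tau}\bigl(\|A\otimes x\|_E+\|{\rm diag}(A)\otimes x\|_E\bigr)\leq 2\,c_{E,\mathcal{M},\tau}\|A\otimes x\|_E,
\]
and the reduction back to general $x$ via $\mu(A\otimes x)=\mu(A\otimes|x|)$ concludes the proof.

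The main obstacle is the transfer of the Fuglede property from $E(\mathcal{M},\tau)$ to $E(\mathcal{N})$. This step depends essentially on the infinite-factor hypothesis, through the stable isomorphism $B(\ell_2)\overline{\otimes}\mathcal{M}\cong\mathcal{M}$; the analogous absorption fails in the II$_1$ setting, which is precisely why the finite-factor case must be treated separately in Section \ref{sec_II_1}.
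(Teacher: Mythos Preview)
Your proposal is correct and follows essentially the same route as the paper: transfer the Fuglede inequality to $\mathcal{N}=M_{n+1}\otimes M_{n+1}\otimes\mathcal{M}$ via the infinite-factor absorption, apply it to $X_m=A_m^n\otimes|x|$, $Y_m=B_m^n\otimes 1$, pass to the limit using Lemma~\ref{commutator limit lemma}, and then unwind with Lemmas~\ref{iotan lemma} and~\ref{peter lemma}. The only cosmetic differences are that the paper works with $|x|$ from the outset rather than reducing to $x\ge 0$, and it phrases the transfer as a trace-scaling $*$-isomorphism $(\mathcal{M}_n,\tau_n)\cong(\mathcal{M},\tau)$ rather than via $B(\ell_2)\overline{\otimes}\mathcal{M}\cong\mathcal{M}$.
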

\begin{proof} We will use the von Neumann {factor } $(\mathcal{M}_n,\tau_n)$ with a faithful normal semifinite trace $\tau_n$ defined as
$$(\mathcal{M}_n,\tau_n)=(M_{n+1}(\mathbb{C})\otimes M_{n+1}(\mathbb{C})\otimes\mathcal{M},{\rm Tr}\otimes{\rm Tr}\otimes\tau).$$
Since  $(\mathcal{M}_n,\tau_n)$ and $(\mathcal{M},\tau)$ are {  trace-scalingly $*$-isomorphic (with the scale $(n+1)^2$) and so $(\mathcal{M}_n,\tau_n)$ is also a semifinite infinite factor (see e.g. \cite[Corollary 11.2.17 and Table 11.1]{K-R_II}}, it follows from the assumption of the lemma that
$$\|[X^{\ast},Y]\|_{E(\mathcal{M}_n,\tau_n)}\leq c_{E,\mathcal{M},\tau}\|[X,Y]\|_{E(\mathcal{M}_n,\tau_n)}$$
for every normal $X\in E(\mathcal{M}_n,\tau_n)$ and for every $Y\in\mathcal{M}_n.$

Let $x\in E(\mathcal{M},\tau).$ Substituting $X=A_m^n\otimes |x|$ and $Y=B_m^n\otimes {\rm id}_{\mathcal M},$ where $A_m^n$ and $B_m^n$ are constructed in Lemma \ref{commutator limit lemma}, we obtain $$[X,Y]=[A_m^n\otimes |x|,B_m^n\otimes {\rm id}_{\mathcal M}]=[A_m^n,B_m^n]\otimes |x|,$$$$[X^{\ast},Y]=[(A_m^n)^*\otimes |x|,B_m^n\otimes {\rm id}_{\mathcal M}]=[(A_m^n)^{\ast},B_m^n]\otimes |x|$$ and so
\begin{equation}\label{eq_A_mnB_mn}
    \|[(A_m^n)^{\ast},B_m^n]\otimes |x|\|_{E(\mathcal{M}_n,\tau_n)}\leq c_{E,\mathcal{M},\tau}\|[A_m^n,B_m^n]\otimes |x|\|_{E(\mathcal{M}_n,\tau_n)}.
\end{equation}
{  Since $M_{n+1}(\mathbb{C})\otimes M_{n+1}(\mathbb{C})$ is a finite-dimensional sub-algebra of $\mathcal{M}_n$, the mapping $A \mapsto A \otimes x$ is continuous from $M_{n+1}(\mathbb{C})\otimes M_{n+1}(\mathbb{C})$ to $E(\mathcal{M}_n,\tau_n)$, hence by Lemma \ref{commutator limit lemma}, $[(A_m^n)^{\ast},B_m^n]\otimes |x|  \to \iota_n(T(A))\otimes |x|$ and  $[A_m^n,B_m^n]\otimes |x|\to \iota_n(A-{\rm diag}(A))\otimes |x|$ as $m\to\infty$ in the norm of $E(\mathcal{M}_n,\tau_n)$.}
Taking limit in \eqref{eq_A_mnB_mn} in the norm of  $E(\mathcal{M}_n,\tau_n)$ as  $m\to\infty,$ we infer \begin{equation}\label{eq_estimate}
    \|\iota_n(T(A))\otimes |x|\|_{E(\mathcal{M}_n,\tau_n)}\leq c_{E,\mathcal{M},\tau}\|\iota_n(A-{\rm diag}(A))\otimes |x|\|_{E(\mathcal{M}_n,\tau_n)}.
\end{equation}
Using Lemma \ref{iotan lemma}, we have 
\begin{multline*}
    \|\iota_n(T(A))\otimes |x|\|_{E(\mathcal{M}_n,\tau_n)}=\||\iota_n(T(A))|\otimes |x|\|_{E(\mathcal{M}_n,\tau_n)}\\  =\|U|T(A)|U^{\ast}\otimes E_{1,1}\otimes |x|\|_{E(\mathcal{M}_n,\tau_n)}=\|T(A)\otimes x\|_{E(B(\ell_2)\overline{\otimes}\mathcal{M},{\rm Tr}\otimes\tau)} 
\end{multline*}
and, similarly, \begin{multline*}
    \|\iota_n(A-{\rm diag}(A))\otimes |x|\|_{E(\mathcal{M}_n,\tau_n)}=\||\iota_n(A-{\rm diag}(A))|\otimes |x|\|_{E(\mathcal{M}_n,\tau_n)}\\=\|U|A-{\rm diag}(A)|U^{\ast}\otimes E_{1,1}\otimes |x|\|_{E(\mathcal{M}_n,\tau_n)}=\|(A-{\rm diag}(A))\otimes x\|_{E(B(\ell_2)\overline{\otimes}\mathcal{M},{\rm Tr}\otimes\tau)}\\{\leq}2\|A\otimes x\|_{E(B(\ell_2)\overline{\otimes}\mathcal{M},{\rm Tr}\otimes\tau)},
\end{multline*}
 where the latter inequality follows from Lemma \ref{peter lemma}. Combining these two estimates with \eqref{eq_estimate}, we arrive at the desired conclusion and thus complete the proof. 
\end{proof}

\begin{lem}\label{beta fedor lemma} Let $E$ be a symmetric function or sequence space. If $\beta_E=1,$ then, for every $n\in\mathbb{N},$ there exists $x_n\in E$ so that
$$\|\alpha\|_1\|x_n\|_E\leq 2\|\alpha\otimes x_n\|_{ E},\quad \alpha\in \ell_{\infty}^n.$$
\end{lem}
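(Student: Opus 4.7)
The plan is to exploit a strong consequence of $\beta_E=1$: the dilation operator satisfies $\|D_k\|_{E\to E}=k$ for every $k\in\mathbb{N}$. Indeed, by submultiplicativity $\|D_{k\ell}\|_E\leq\|D_k\|_E\|D_\ell\|_E$, a single estimate $\|D_{k_0}\|_E\leq k_0^{1-\varepsilon}$ for some $k_0$ and $\varepsilon>0$ would iterate along powers of $k_0$ to force $\beta_E\leq 1-\varepsilon$; combined with the trivial $\|D_k\|_E\leq k$ this yields equality. Moreover, for symmetric Banach function and sequence spaces one has $\|D_n\|_E=\sup_{a>0}\phi_E(na)/\phi_E(a)$, where $\phi_E(t):=\|\chi_{(0,t)}\|_E$ is the fundamental function (classical for Lorentz spaces, and in the general lattice setting obtained by a layer-cake reduction). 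Hence one can select $a_n>0$ with $\phi_E(na_n)\geq(n/2)\phi_E(a_n)$.

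With this $a_n$, I would take $x_n:=\chi_{(0,a_n)}$ in the function-space setting (and $x_n=\sum_{j=1}^{N_n}e_j$ in the sequence-space setting, with the analogous choice of $N_n$). A direct rearrangement computation shows that, assuming without loss of generality $|\alpha_1|\geq\cdots\geq|\alpha_n|\geq 0$, the generalized singular value function of $\alpha\otimes x_n$ is the decreasing step function
\[
\mu(\alpha\otimes x_n)(s)=\sum_{k=1}^n|\alpha_k|\chi_{((k-1)a_n,\,k a_n)}(s).
\]

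The required lower bound is then obtained from K\"othe duality with the test function $g:=\chi_{(0,n a_n)}\in E^{\times}$. Using the classical duality identity $\phi_E(t)\phi_{E^{\times}}(t)=t$, we have $\|g\|_{E^{\times}}=\phi_{E^{\times}}(na_n)=na_n/\phi_E(na_n)$, while the pairing evaluates to $\int_0^{\infty}\mu(\alpha\otimes x_n)(s)\,g(s)\,ds=a_n\|\alpha\|_1$. Therefore
\[
\|\alpha\otimes x_n\|_E\geq\frac{a_n\|\alpha\|_1}{na_n/\phi_E(na_n)}=\frac{\phi_E(na_n)}{n}\|\alpha\|_1\geq\frac{\phi_E(a_n)}{2}\|\alpha\|_1=\frac{\|x_n\|_E}{2}\|\alpha\|_1,
\]
which is exactly the required inequality.

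The main technical point I expect to require care is the extremality statement $\|D_n\|_E=\sup_a\phi_E(na)/\phi_E(a)$; in the general Banach lattice setting this needs a careful layer-cake/approximation argument reducing the dilation of an arbitrary decreasing function to the dilations of characteristic functions of its superlevel sets. Should this route prove problematic, a fallback is to take $x_n$ to be an approximately extremal function for $D_{n!}$: submultiplicativity then gives $\|D_k\mu(x_n)\|_E\geq(k/2)\|x_n\|_E$ for every $k\leq n$, which handles the case $\alpha=(1,\ldots,1,0,\ldots,0)$, and the extension to arbitrary $\alpha$ would follow from an Abel-summation argument applied to the disjointly-supported representation $\|\alpha\otimes x_n\|_E=\|\sum_k|\alpha_k|\mu_k\|_E$, where the $\mu_k$ are disjoint translates of $\mu(x_n)$.
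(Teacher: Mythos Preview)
Your first step---deducing $\|D_n\|_{E\to E}=n$ for every $n$ from $\beta_E=1$ via submultiplicativity---is correct and is exactly what the paper does. The gap is in the next step: the identity
\[
\|D_n\|_{E\to E}=\sup_{a>0}\frac{\phi_E(na)}{\phi_E(a)}
\]
is \emph{not} true in general symmetric spaces. The right-hand side governs the upper \emph{fundamental} (Zippin/Matuszewska--Orlicz) index, which can be strictly smaller than the upper Boyd index; there exist rearrangement-invariant spaces with $\beta_E=1$ but $\sup_a\phi_E(na)/\phi_E(a)=o(n)$, so no characteristic function $x_n=\chi_{(0,a_n)}$ will ever satisfy $\phi_E(na_n)\geq(n/2)\phi_E(a_n)$. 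The layer-cake reduction you sketch does not rescue this: decomposing a decreasing $f$ as $\int_0^\infty\chi_{(0,m_f(\lambda))}\,d\lambda$ and applying the triangle inequality yields only the upper bound $\|D_nf\|_E\leq\int_0^\infty\phi_E(nm_f(\lambda))\,d\lambda$, and there is no companion lower bound $\|f\|_E\geq c\int_0^\infty\phi_E(m_f(\lambda))\,d\lambda$ in a general $E$ (that inequality is precisely the embedding $E\hookrightarrow\Lambda_{\phi_E}$, which fails in general). So the K\"othe-duality computation, while correct once $x_n$ is a characteristic function with the stated property, rests on a choice of $x_n$ that need not exist.

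Your fallback is closer to a valid proof but still incomplete. The $D_{n!}$ trick does give $\|D_k\mu(x_n)\|_E\geq(k/2)\|x_n\|_E$ for every $k\leq n$ (via $\|D_{n!}x_n\|_E\leq\|D_{n!/k}\|\,\|D_kx_n\|_E\leq(n!/k)\|D_kx_n\|_E$). But the Abel-summation step does not produce a \emph{lower} bound: writing $\alpha\otimes x_n=\sum_k(|\alpha_k|-|\alpha_{k+1}|)\,S_k$ with $S_k=e_{\leq k}\otimes x_n$, all summands are nonnegative and the triangle inequality goes the wrong way; from $\alpha\otimes x_n\geq(|\alpha_k|-|\alpha_{k+1}|)S_k$ you only get $\|\alpha\otimes x_n\|_E\geq\max_k(|\alpha_k|-|\alpha_{k+1}|)\|S_k\|_E$, not the needed sum. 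The paper closes this gap by a different averaging: with \emph{any} $x_n$ satisfying $\|D_nx_n\|_E\geq(n/2)\|x_n\|_E$ (no characteristic function required), one averages over the $n$ cyclic permutations $\alpha_1,\ldots,\alpha_n$ of $\mu(\alpha)$, uses $\sum_{k=1}^n\alpha_k=\|\alpha\|_1\,(1,\ldots,1)$, and applies the triangle inequality in the correct direction to get
\[
n\,\|\alpha\otimes x_n\|_E=\sum_{k=1}^n\|\alpha_k\otimes x_n\|_E\geq\Bigl\|\sum_{k=1}^n\alpha_k\otimes x_n\Bigr\|_E=\|\alpha\|_1\,\|D_nx_n\|_E\geq\frac{n}{2}\,\|\alpha\|_1\,\|x_n\|_E.
\]
This replaces both your fundamental-function hypothesis and the Abel step.
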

\begin{proof}
{ Suppose that $\beta_E = 1$. The mapping $n \mapsto \|D_n\|_{E \to E}$, $n \in \mathbb{N}$, is known to be submultiplicative (\cite[Theorem 4.5]{KPS}) and dominated by the identity mapping, that is $\|D_n\|_{E \to E}\le n$, $n \in \mathbb{N}$  (\cite[Corollary 1, p.~98]{KPS}). 

Assume, towards a contradiction, that $\|D_n\|_{E \to E} < n$ for some $n \in \mathbb{N}$. Then there exists $\gamma < 1$ such that $\|D_n\|_{E \to E} \le n^\gamma.$

Using submultiplicativity,   for every $k \in \mathbb{N}$ we obtain
\[
\|D_{n^k}\|_{E \to E} \le \|D_n\|_{E \to E}^k \le (n^\gamma)^k = (n^k)^\gamma.
\]
This shows that
\[
\frac{\log \|D_{n^k}\|_{E \to E}}{\log n^k} \le \gamma \quad \text{for all } k.
\]
Taking the limit as $k \to \infty$, we obtain:
\[
\limsup_{m \to \infty} \frac{\log \|D_m\|_{E \to E}}{\log m} \le \gamma < 1,
\]
which contradicts the assumption $\beta_E = 1$. Hence, we conclude that
\[
\|D_n\|_{E \to E} = n \quad \text{for all } n \in \mathbb{N}.
\]}
Next for $n\in\mathbb{N},$ { select} $x_n\in E$ such that $\|D_nx_n\|_E\geq\frac{n}{2}\|x_n\|_E.$ 

Let $\alpha\in \ell_{\infty}^n.$ Set $\alpha_k\in \ell_{\infty}^n,$ $1\leq k\le n,$ be sequences supported on the set $\{1,\cdots,n\}$ which are cyclic permutations of $\mu(\alpha)$, that is $$\alpha_k(j)=\mu((j-k){\rm mod} \, n+1,\alpha), \quad 1\leq j\le n .$$ By triangle inequality, we have
$$n\|\alpha\otimes x_n\|_{E}=\sum_{k=1}^{n}\|\alpha_k\otimes x_n\|_{E}\geq\Big\|\sum_{k=1}^{n}\alpha_k\otimes x_n\Big\|_{E}.$$
However,
$$\sum_{k=1}^{n}\alpha_k=\big(\underbrace{\|\alpha\|_1, \dots, \|\alpha\|_1}_{n \text{ times}}\big)=\|\alpha\|_1\big(\underbrace{1, \dots, 1}_{n \text{ times}}\big).$$
Hence,
\begin{equation*}
    n\|\alpha\otimes x_n\|_{E}\geq \|\alpha\|_1\|\big(\underbrace{1, \dots, 1}_{n \text{ times}}\big)\otimes x_n\|_{E}=\|\alpha\|_1\|D_nx_n\|_E\geq\frac{n}{2}\|\alpha\|_1\|x_n\|_E.
\end{equation*}
Dividing by $n,$ we complete the proof.
\end{proof}

\begin{lem}\label{alpha fedor lemma} Let $E$ be a symmetric function or sequence space. If $\alpha_E=0,$ then, for every $n\in\mathbb{N},$ there exists $x_n\in E$ so that
$$\|\alpha\otimes x_n\|_E\leq 2\|\alpha\|_{\infty}\|x_n\|_E,\quad \alpha\in \ell_{\infty}^n.$$
\end{lem}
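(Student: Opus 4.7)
The plan is to mirror the proof of Lemma~\ref{beta fedor lemma} in a dual fashion. First, the submultiplicativity of $n\mapsto\|D_{1/n}\|_{E\to E}$, combined with the universal upper estimate $\|D_{1/n}\|_{E\to E}\le 1$, together with the hypothesis $\alpha_E=0$ will force $\|D_{1/n}\|_{E\to E}=1$ for every $n\in\mathbb{N}$. Indeed, if $\|D_{1/n_0}\|_{E\to E}<1$ for some $n_0\ge 2$, then submultiplicativity gives $\|D_{1/n_0^k}\|_{E\to E}\le \|D_{1/n_0}\|_{E\to E}^k$ for every $k$, so
$$\alpha_E \ge -\frac{\log\|D_{1/n_0}\|_{E\to E}}{\log n_0} > 0,$$
contradicting $\alpha_E=0$.

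Given $\|D_{1/n}\|_{E\to E}=1$, for each $n\in\mathbb{N}$ I would select a non-increasing $y_n\ge 0$ in $E$ with $\|D_{1/n}y_n\|_E\ge \tfrac12\|y_n\|_E$, and define $x_n := D_{1/n}y_n$. Then $\|x_n\|_E\ge \tfrac12\|y_n\|_E$; and since $y_n$ is non-increasing, a direct coordinate-wise inspection yields the pointwise inequality $D_nD_{1/n}y_n\le y_n$, whence
$$\|D_nx_n\|_E = \|D_nD_{1/n}y_n\|_E \le \|y_n\|_E \le 2\|x_n\|_E.$$

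For an arbitrary $\alpha\in\ell_\infty^n$, the entrywise domination $|\alpha_ix_{n,j}|\le\|\alpha\|_\infty|x_{n,j}|$ produces the rearrangement inequality
$$\mu(\alpha\otimes x_n)\le \|\alpha\|_\infty\mu(\mathbf{1}_n\otimes x_n)=\|\alpha\|_\infty\mu(D_nx_n),$$
and the symmetric property of $\|\cdot\|_E$ combined with the previous step then finishes the proof:
$$\|\alpha\otimes x_n\|_E\le \|\alpha\|_\infty\|D_nx_n\|_E\le 2\|\alpha\|_\infty\|x_n\|_E.$$

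The main technical hurdle lies in ensuring that the near-maximizer $y_n$ of $\|D_{1/n}\|_{E\to E}$ may be taken non-increasing, as the overall supremum need not be attained on non-increasing sequences (for instance, sequences supported on multiples of $n$ achieve ratio $1$ but violate the pointwise estimate $D_nD_{1/n}y_n\le y_n$ that is crucial in the third step). The remedy is either to interpret Boyd indices via the dilation's action on non-increasing rearrangements, or to construct $y_n$ explicitly as $\mathbf{1}_{[1,N(n)]}$ with $N(n)$ chosen large enough that $\varphi_E(\lfloor N(n)/n\rfloor)\ge \tfrac12\varphi_E(N(n))$, which is precisely the type of slow-variation statement guaranteed by the assumption $\alpha_E=0$.
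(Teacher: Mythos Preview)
Your proposal is correct and follows essentially the same route as the paper: deduce $\|D_{1/n}\|_{E\to E}=1$ from $\alpha_E=0$ via submultiplicativity, pick a near-maximizer $y_n$, set $x_n=D_{1/n}y_n$, and conclude $\|D_nx_n\|_E\le 2\|x_n\|_E$, after which the tensor estimate is immediate. The only difference is that the paper simply writes $D_nD_{1/n}y_n=y_n$ (which is literally true in the function-space setting and is handled in the sequence case by passing to the step-function embedding), whereas you work with the pointwise inequality $D_nD_{1/n}y_n\le y_n$ for non-increasing $y_n$; both lead to the same bound.
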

\begin{proof} {   Suppose that $\alpha_E=0$. The mapping $n \mapsto \|D_{1/n}\|_{E \to E}$, $n \in \mathbb{N}$, is known to be submultiplicative (\cite[Theorem 4.5]{KPS}). Setting $\omega_n=\|D_{1/n}\|_{E \to E}$, we have that if $\omega_n<1$ for some $n,$ it follows that $\omega_n=O(n^{-\gamma})$ for some $\gamma>0.$}

{ 
Indeed, let $\omega_{n_0}<1$ for some $n_0\ge 2$. Taking an arbitrary $n\ge 1$, we denote $k=\lfloor\frac{\ln n}{\ln n_0}\rfloor$ so that $n_0^k\le n<n_0^{k+1}$. For a function space, take $r=\frac{n}{n_0^k}$ with $1\le r < n_0$, we obtain $$\omega_n=\omega_{n_0^k r}\le \omega_{n_0^k}\omega_r\le C_0 \omega_{n_0^k},$$ where $C_0=\max_{1\le r<n_0}\omega_r$. On the other hand, $$\omega_{n_0^k}\le \omega_{n_0}^k\le (n_0^k)^{-\gamma}=\Big(\frac{n}{r}\Big)^{-\gamma}=r^\gamma n^{-\gamma}< n_0^\gamma n^{-\gamma},$$ where $\gamma =-\ln \omega_{n_0}/\ln n_0>0.$ Taking now $C=C_0 n_0^\gamma$ we conclude that $\omega_n\le C n^{-\gamma},$ for $\gamma>0$ or $\omega_n=O(n^{-\gamma}).$ For a sequence space, the argument is similar, due to the natural identification of a sequence space with the subspace of characteristic functions in a function space.

Thus, we have that
$$
\alpha_E = \lim_{n\to\infty} \frac{\log\omega_n }{\log(1/n) }=\lim_{n\to\infty}\frac{-\log\omega_n }{\log n }\ge \lim_{n\to\infty} \frac{-\log C n^{-\gamma} }{\log n }=\lim_{n\to\infty} \Big(\gamma-\frac{\log C}{\log n }\Big)>0,
$$
which contradicts the assumption $\alpha_E=0.$ Hence, $\omega_n=\|D_{1/n}\|_{E \to E}\ge 1$ for all $n\in\mathbb{N}.$ 

Since $$\|D_{1/n}\|_{E \to E}=\sup_{\|y\|_E= 1}\|D_{1/n}y\|_E\ge 1>\frac12,$$ it follows that we can select $y_n\in E$ with $\|y_n\|_E=1$ and  $\|D_{1/n}y_n\|_E\ge \frac12.$ Let $x_n=D_{1/n}y_n$.  In particular, we have 
$$
D_nx_n=D_n D_{1/n}y_n=y_n
$$ 
and so $\|D_nx_n\|_E=\|y_n\|_E=1.$ On the other hand, 
$$
\|x_n\|_E=\|D_{1/n}y_n\|_E\ge  \frac12=\frac12\|D_nx_n\|_E.
$$    
Hence, for $n\in\mathbb{N},$ we can select $x_n\in E$ such that $\|D_nx_n\|_E\leq 2\|x_n\|_E.$ 

Now consider $\alpha\in \ell_{\infty}^n$.
Since
$$\mu(\alpha)\leq\|\alpha\|_{\infty}\big(\underbrace{1, \dots, 1}_{n \text{ times}}\big),$$
it follows that
$$\mu(\alpha\otimes x_n)\leq \|\alpha\|_{\infty} \mu\Big(\big(\underbrace{1, \dots, 1}_{n \text{ times}}\big) \otimes x_n\Big)=\|\alpha\|_{\infty} D_n\mu(x_n).$$
Taking $\|\cdot\|_E$ of the both sides of the inequality above, it follows that
$$\|\alpha\otimes x_n\|_E\leq \|\alpha\|_{\infty}\|D_nx_n\|_E\le 2\|\alpha\|_{\infty}\|x_n\|_.$$}
\end{proof}

\begin{lem}\label{lem_upper} Let $(\mathcal{M},\tau)$ be a semifinite infinite factor. Let $E$ be a symmetric function or sequence space (depending on whether $\mathcal{M}$ is II$_{\infty}$ or I$_{\infty}$). If 
$$\|T(A)\otimes x\|_{E(B(\ell_2)\overline{\otimes}\mathcal{M},{\rm Tr}\otimes\tau)}\leq 2c_{E,\mathcal{M},\tau}\|A\otimes x\|_{E(B(\ell_2)\overline{\otimes}\mathcal{M},{\rm Tr}\otimes\tau)}$$
for every $x\in E(\mathcal{M},\tau)$ and for every matrix $A$ having only finitely many non-zero entries, then $\beta_E<1.$
\end{lem}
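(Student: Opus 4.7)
The plan is to proceed by contradiction: assume $\beta_E=1$ and aim to contradict the hypothesis of the lemma. The strategy is to extract from the assumed inequality a uniform bound on the trace-class norm of the triangular truncation $T$ restricted to $M_n(\mathbb{C})$, and then contradict it using the classical logarithmic lower bound on this norm. Applying Lemma \ref{beta fedor lemma}, for each $n\in\mathbb{N}$ I would select $x_n\in E$ with $\|\alpha\|_1\|x_n\|_E\le 2\|\alpha\otimes x_n\|_E$ for every $\alpha\in\ell_\infty^n$, and then lift $x_n$ to a positive operator $y_n\in E(\mathcal{M},\tau)$ with $\mu(y_n)=\mu(x_n)$, so that $\|y_n\|_{E(\mathcal{M})}=\|x_n\|_E$ (this is possible since $\mathcal{M}$ is an infinite semifinite factor).

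Next, I would establish two matching estimates for every finite matrix $A\in M_n(\mathbb{C})$. The lower bound
\[
\|A\otimes y_n\|_{E(B(\ell_2)\overline{\otimes}\mathcal{M})}\ge \tfrac{1}{2}\,\|A\|_{1}\,\|y_n\|_{E(\mathcal{M})},
\]
in which $\|A\|_{1}$ denotes the trace-class norm, follows from the observation that $A\otimes y_n$ and $\mathrm{diag}(\mu(A))\otimes y_n$ share the same singular value function, together with Lemma \ref{beta fedor lemma} applied to $\alpha=\mu(A)\in\ell_\infty^n$, using the identification $\|\mathrm{diag}(\mu(A))\otimes y_n\|_{E(B(\ell_2)\overline{\otimes}\mathcal{M})}=\|\mu(A)\otimes x_n\|_E$. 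The matching upper bound
\[
\|A\otimes y_n\|_{E(B(\ell_2)\overline{\otimes}\mathcal{M})}\le \|A\|_{1}\,\|y_n\|_{E(\mathcal{M})}
\]
follows from the triangle inequality applied to the singular value decomposition $A=\sum_i\sigma_i u_iv_i^*$, each rank-one tensor $u_iv_i^*\otimes y_n$ having the same singular value function as $y_n$. Substituting $T(A)$ into the lower bound and then invoking the hypothesis of the lemma yields $\|T(A)\|_{1}\le 4c_{E,\mathcal{M},\tau}\|A\|_{1}$ for every $A\in M_n(\mathbb{C})$ and every $n$.

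The argument is then closed by invoking the classical Gohberg--Krein (equivalently, Kwapień--Pełczyński) estimate, which asserts that the trace-class norm of $T$ on $M_n(\mathbb{C})$ grows like $\log n$: there exist $A_n\in M_n(\mathbb{C})$ with $\|A_n\|_{1}=1$ and $\|T(A_n)\|_{1}\gtrsim \log n$. Letting $n\to\infty$ contradicts the uniform bound and forces $\beta_E<1$. The main obstacle is the transfer of Lemma \ref{beta fedor lemma} from the commutative symmetric space $E$ to $E(B(\ell_2)\overline{\otimes}\mathcal{M})$; this rests on the rearrangement invariance of both norms and on the fact that they depend only on the singular value function of $A\otimes y_n$. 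The upper bound via SVD and the logarithmic lower bound for $T$ on the trace class are standard.
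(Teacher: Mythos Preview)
Your proof is correct and follows essentially the same route as the paper's: argue by contradiction, invoke Lemma~\ref{beta fedor lemma} to transfer to an $L_1$-type estimate, and then contradict the logarithmic growth of triangular truncation in the trace class. The only difference is that the paper shortcuts your SVD upper bound by testing directly on rank-one projections $P\in M_n(\mathbb{C})$, for which $\|P\otimes x_n\|_E=\|x_n\|_E$ trivially, and then quotes Davies' explicit projections $P_n=\tfrac{1}{n}J_n$ with $\|T(P_n)\|_1\sim\log n$; your version instead derives the full operator-norm bound $\|T\|_{L_1(M_n)\circlearrowleft}\le 4c_{E,\mathcal{M},\tau}$, which is slightly more than needed but perfectly valid.
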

\begin{proof} Suppose $\beta_E=1.$ By Lemma \ref{beta fedor lemma}, for every $n\in\mathbb{N},$ we can find non-zero $x_n\in E(\mathcal{M},\tau)$ such that
$$\frac12\|A\|_{L_1(B(\ell_2),{\rm Tr})}\|x_n\|_{E(\mathcal{M},\tau)}\leq\|A\otimes x_n\|_{E(B(\ell_2)\overline{\otimes}\mathcal{M},{\rm Tr}\otimes\tau)},\quad {\rm rank}(A)\leq n.$$

Let now $A=T(P),$ where $P\in M_n(\mathbb{C})$ is a rank one projection. We have
$$\frac12\|T(P)\|_{L_1(M_n(\mathbb{C}),{\rm Tr})}\|x_n\|_{E(\mathcal{M},\tau)}\leq \|T(P)\otimes x_n\|_{E(B(\ell_2)\overline{\otimes}\mathcal{M},{\rm Tr}\otimes\tau)}$$
$$\leq 2c_{E,\mathcal{M},\tau}\|P\otimes x_n\|_{E(B(l_2)\overline{\otimes}\mathcal{M},{\rm Tr}\otimes\tau)}=2c_{E,\mathcal{M},\tau}\|x_n\|_{E(\mathcal{M},\tau)}.$$
it follows that
$$\|T(P)\|_{L_1(M_n(\mathbb{C}),{\rm Tr})}\leq 4c_{E,\mathcal{M},\tau}$$
for every rank one projection $P\in M_n(\mathbb{C}).$ However, { this contradicts Davies \cite[Lemma 10 and Corollary 11]{Davies1988}, where he shows that triangular truncation $T$ is unbounded on $\mathcal{S}_1$ {(see also \cite[Chapter 4]{Davidson-Nest})}. In particular, he considered the projection 
\[
P_n = \tfrac{1}{n} J_n, \qquad J_n = (1)_{i,j=1}^n,
\] 
onto the span of $(1,\dots,1)$, and showed that 
\[
\|T(P_n)\|_{1} \sim \log n.
\] Thus no bound of the form $\|T(P)\|_{1} \le C$ (independent of $n$) can be valid even on rank-one projections.
 Hence, our initial assumption is false.} We conclude that $\beta_E<1.$
\end{proof}

\begin{lem}\label{lem_lower} Let $(\mathcal{M},\tau)$ be a semifinite infinite factor. Let $E$ be a symmetric function or sequence space (depending on whether $\mathcal{M}$ is II$_{\infty}$ or I$_{\infty}$). If 
$$\|T(A)\otimes x\|_{E(B(\ell_2)\overline{\otimes}\mathcal{M},{\rm Tr}\otimes\tau)}\leq 2c_{E,\mathcal{M},\tau}\|A\otimes x\|_{E(B(\ell_2)\overline{\otimes}\mathcal{M},{\rm Tr}\otimes\tau)}$$
for every $x\in E(\mathcal{M},\tau)$ and for every matrix $A$ having only finitely many non-zero entries, then $\alpha_E>0.$
\end{lem}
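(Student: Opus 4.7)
The plan is to argue by contradiction in exact parallel with Lemma \ref{lem_upper}, but replacing the $\mathcal{S}_1$-duality fact (used there) by its $\mathcal{S}_\infty$-duality counterpart and swapping Lemma \ref{beta fedor lemma} for Lemma \ref{alpha fedor lemma}. Suppose $\alpha_E = 0$. By Lemma \ref{alpha fedor lemma}, for each $n \in \mathbb{N}$ I obtain a nonzero $x_n \in E$ satisfying
\[
\|\alpha \otimes x_n\|_E \leq 2\|\alpha\|_\infty \|x_n\|_E, \qquad \alpha \in \ell_\infty^n.
\]
I would first lift $x_n$ to a positive $\tilde{x}_n \in E(\mathcal{M}, \tau)$ with $\mu(\tilde{x}_n) = \mu(x_n)$ and $\|\tilde{x}_n\|_{E(\mathcal{M})} = \|x_n\|_E$, using a diagonal embedding in the type $\mathrm{I}_\infty$ case and a MASA of $\mathcal{M}$ in the type $\mathrm{II}_\infty$ case.

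The key identity I would exploit is the tensor-product singular value relation
\[
\mu(A \otimes \tilde{x}_n) = \mu\bigl(\mu(A) \otimes \mu(\tilde{x}_n)\bigr),
\]
which gives $\|A \otimes \tilde{x}_n\|_{E(B(\ell_2) \overline{\otimes} \mathcal{M})} = \|\mu(A) \otimes \mu(x_n)\|_E$. For any matrix $A$ of rank at most $n$, applying the defining property of $x_n$ with $\alpha = \mu(A) \in \ell_\infty^n$ yields the upper bound
\[
\|A \otimes \tilde{x}_n\|_{E(B(\ell_2) \overline{\otimes} \mathcal{M})} \leq 2\|A\|_\infty \|\tilde{x}_n\|_{E(\mathcal{M})}.
\]
Since decreasingly rearranging the set of all products can only increase the contribution from the leading singular value of $T(A)$, I get the trivial lower bound $\mu(T(A) \otimes \tilde{x}_n) \geq \|T(A)\|_\infty \mu(\tilde{x}_n)$, hence
\[
\|T(A) \otimes \tilde{x}_n\|_{E(B(\ell_2) \overline{\otimes} \mathcal{M})} \geq \|T(A)\|_\infty \|\tilde{x}_n\|_{E(\mathcal{M})}.
\]

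Inserting these two inequalities into the hypothesis of the lemma and cancelling the positive factor $\|\tilde{x}_n\|_{E(\mathcal{M})}$ yields the uniform estimate $\|T(A)\|_\infty \leq 4 c_{E,\mathcal{M},\tau} \|A\|_\infty$ for every finite-rank matrix $A$. This contradicts the classical fact that the antisymmetric triangular truncation is unbounded on $B(\ell_2)$: indeed, since $T$ is self-adjoint for the Hilbert--Schmidt pairing and $\mathcal{S}_1^\ast = \mathcal{S}_\infty$, this unboundedness is dual to (and equivalent to) the Davies bound $\|T(P_n)\|_1 \sim \log n$ already invoked in Lemma \ref{lem_upper}. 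Consequently $\alpha_E > 0$.

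The main technical care is the verification of the singular-value identity $\mu(A \otimes \tilde{x}_n) = \mu(\mu(A) \otimes \mu(\tilde{x}_n))$ uniformly across both $\mathrm{I}_\infty$ and $\mathrm{II}_\infty$ factors, so that the purely commutative Lemma \ref{alpha fedor lemma} genuinely controls the noncommutative tensor-product norm; the lift of $x_n$, the leading-singular-value lower bound, and the appeal to the unboundedness of $T$ on $B(\ell_2)$ are all essentially soft.
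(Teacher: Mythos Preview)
Your proposal is correct and follows essentially the same approach as the paper's own proof: assume $\alpha_E=0$, invoke Lemma~\ref{alpha fedor lemma} to produce elements $x_n$ for which the tensor norm is controlled by the operator norm of the matrix factor, combine the upper bound on $\|A\otimes x_n\|_E$ with the trivial leading-singular-value lower bound on $\|T(A)\otimes x_n\|_E$, and derive the uniform estimate $\|T(A)\|_\infty\le 4c_{E,\mathcal{M},\tau}\|A\|_\infty$, which contradicts the (dual) Davies unboundedness of triangular truncation on $B(\ell_2)$. The only cosmetic difference is that you spell out the lift $x_n\mapsto\tilde{x}_n$ and the tensor singular-value identity more explicitly than the paper does.
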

\begin{proof} Suppose $\alpha_E=0.$ By Lemma \ref{alpha fedor lemma}, for every $n\in\mathbb{N},$ we can find non-zero $x_n\in E(\mathcal{M},\tau)$ such that
$$\|A\otimes x_n\|_{E(B(\ell_2)\overline{\otimes}\mathcal{M},{\rm Tr}\otimes\tau)}\leq 2\|A\|_{B(\ell_2)}\|x\|_{E(\mathcal{M},\tau)},\quad {\rm rank}(A)\leq n.$$

Next we note that for any $B\in M_n(\mathbb{C})$ and $x\in E(\mathcal{M},\tau)$, we have
$$\|B\otimes x\|_{E(B(\ell_2)\overline{\otimes}\mathcal{M},{\rm Tr}\otimes\tau)}=\|\mu(B)\otimes x\|_{E(\ell_{\infty}\overline{\otimes}\mathcal{M},\Sigma\otimes\tau)}\geq$$
$$\geq\|\mu(1,B)e_1\otimes x\|_{E(\ell_{\infty}\overline{\otimes}\mathcal{M},\Sigma\otimes\tau)}=\mu(1,B)\|x\|_{E(\mathcal{M},\tau)}=\|B\|_{B(\ell_2)}\|x\|_{E(\mathcal{M},\tau)},$$ where $\Sigma$ is a counting measure on $\mathbb{N}$, $\mu(1,B)=\|B\|_{B(\ell_2)}$ is the maximal singular value of the matrix $B$ and $e_1=(1,0,\ldots,0,\ldots)$ is the first standard vector basis.

Take now $A\in M_n(\mathbb{C}).$ Applying the latter inequality with $B=T(A)$ and $x=x_n$ yields
$$\|T(A)\|_{B(\ell_2)}\|x_n\|_{E(\mathcal{M},\tau)}\leq \|T(A)\otimes x_n\|_{E(B(\ell_2)\overline{\otimes}\mathcal{M},{\rm Tr}\otimes\tau)}\leq$$
$$\leq 2c_{E,\mathcal{M},\tau}\|A\otimes x_n\|_{E(B(\ell_2)\overline{\otimes}\mathcal{M},{\rm Tr}\otimes\tau)}\le 4c_{E,\mathcal{M},\tau}\|A\|_{B(\ell_2)}\|x_n\|_{E(\mathcal{M},\tau)}.$$
It follows that
$$\|T(A)\|_{B(\ell_2)}\leq 4c_{E,\mathcal{M},\tau}\|A\|_{B(\ell_2)},\quad A\in M_n(\mathbb{C}).$$
 This contradicts the result of Davies \cite[Lemma~10 and Corollary~11]{Davies1988}, 
where he constructed finite-dimensional matrices $A_n\in M_n(\mathbb C)$ such that 
$\|T(A_n)\|_{1}\sim \log n$. By duality argument no uniform bound of the form 
$\|T(A)\|_{B(\ell_2)}\le C\|A\|_{B(\ell_2)}$ with $C$ independent of $n$ can hold {(see also \cite[Chapter 4]{Davidson-Nest})}. Hence, our initial assumption is false. We conclude that $\alpha_E>0.$
\end{proof}

\section{Proof of necessity for type II$_1$ factors} \label{sec_II_1}

In this section we prove that the conditions stated in Theorem \ref{MainThm_OpSp} are necessary in the factor of type II$_{1}$ assuming that $\tau({\rm id}_{\mathcal{M}})=1$.

For $m\in\mathbb{N}$ let $E_m$ be a symmetric function space on $(0,m)$ defined by the setting
$$\|f\|_{E_m}=\|D_{\frac1m}f\|_E.$$

\begin{lem}\label{lem_another side_fin} Let $\mathcal{M}$ be a finite factor equipped with a faithful normal tracial state $\tau.$ Let $E$ be a symmetric function space on $(0,1).$ If
$$\|[X^{\ast},Y]\|_{E(\mathcal{M},\tau)}\leq c_{E,\mathcal{M},\tau}\|[X,Y]\|_{E(\mathcal{M},\tau)}$$
for every normal $X\in E(\mathcal{M},\tau)$ and for every $Y\in \mathcal{M},$ then
$$\|T(A)\otimes x\|_{E_{(n+1)^2}(M_{(n+1)^2}(\mathbb{C})\otimes\mathcal{M},{\rm Tr}\otimes\tau)}\leq$$
$$\leq 2c_{E,\mathcal{M},\tau}\|A\otimes x\|_{E_{(n+1)^2}(M_{(n+1)^2}(\mathbb{C})\otimes\mathcal{M},{\rm Tr}\otimes\tau)}.$$
for every $x\in E(\mathcal{M},\tau)$ and for every $A\in M_n(\mathbb{C}).$ \end{lem}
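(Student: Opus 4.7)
The plan is to mirror the proof of Lemma~\ref{lem_another side_inf}, but the trace-scaling step used there fails in the II$_1$ setting: a general II$_1$ factor $\mathcal{M}$ need not be $*$-isomorphic to $M_{(n+1)^2}(\mathbb{C})\otimes\mathcal{M}$. I will replace that step by the structural fact that every II$_1$ factor unitally absorbs every finite matrix algebra. Explicitly, choosing $(n+1)^2$ pairwise orthogonal projections in $\mathcal{M}$ of trace $1/(n+1)^2$ with matching matrix units yields a unital $*$-embedding $\iota : M_{n+1}(\mathbb{C})\otimes M_{n+1}(\mathbb{C}) \hookrightarrow \mathcal{M}$ with image $\mathcal{A}$; setting $\mathcal{N} := \mathcal{A}'\cap\mathcal{M}$, the multiplication map induces $\mathcal{M}\cong\mathcal{A}\otimes\mathcal{N}$, where $\mathcal{N}$ is a II$_1$ factor and $\tau = \mathrm{tr}_{(n+1)^2}\otimes\tau|_{\mathcal{N}}$. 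Since $\mathcal{N}$ is a II$_1$ factor, I select a positive $\widetilde{|x|}\in\mathcal{S}(\mathcal{N},\tau|_{\mathcal{N}})$ with $\mu(\widetilde{|x|};\tau|_{\mathcal{N}}) = \mu(|x|;\tau)$ (realized inside any diffuse MASA of $\mathcal{N}$).

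The key norm identification is that, for every $a\in M_{n+1}(\mathbb{C})\otimes M_{n+1}(\mathbb{C})$, the operators $\iota(a)$ and $\widetilde{|x|}$ commute, so $\iota(a)\widetilde{|x|}$ corresponds to $a\otimes\widetilde{|x|}$ under $\mathcal{M}\cong\mathcal{A}\otimes\mathcal{N}$. Comparing the normalized trace $\mathrm{tr}_{(n+1)^2}$ on $\mathcal{A}$ with the unnormalized trace $\mathrm{Tr}$ on $M_{n+1}(\mathbb{C})\otimes M_{n+1}(\mathbb{C})$ yields
\[
\mu(\iota(a)\widetilde{|x|};\tau) \;=\; D_{1/(n+1)^2}\,\mu\bigl(a\otimes|x|;\mathrm{Tr}\otimes\mathrm{Tr}\otimes\tau\bigr),
\]
so by the very definition of $E_{(n+1)^2}$,
\[
\|\iota(a)\widetilde{|x|}\|_{E(\mathcal{M},\tau)} \;=\; \|a\otimes|x|\|_{E_{(n+1)^2}(\mathcal{M}_n,\tau_n)},
\]
where $\mathcal{M}_n := M_{n+1}(\mathbb{C})\otimes M_{n+1}(\mathbb{C})\otimes\mathcal{M}$ and $\tau_n := \mathrm{Tr}\otimes\mathrm{Tr}\otimes\tau$. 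I then substitute $X := \iota(A_m^n)\widetilde{|x|}$ (normal in $\mathcal{M}$, since $A_m^n$ is normal and $\iota(A_m^n)\in\mathcal{A}$ commutes with $\widetilde{|x|}\in\mathcal{N}$) and $Y := \iota(B_m^n)$ from Lemma~\ref{commutator limit lemma} into the Fuglede hypothesis on $(E,\mathcal{M})$. The commutators factor as $[X,Y] = \iota([A_m^n,B_m^n])\widetilde{|x|}$ and $[X^*,Y] = \iota([(A_m^n)^*,B_m^n])\widetilde{|x|}$, so the identification above converts the Fuglede inequality into
\[
\|[(A_m^n)^*,B_m^n]\otimes|x|\|_{E_{(n+1)^2}(\mathcal{M}_n,\tau_n)} \;\leq\; c_{E,\mathcal{M},\tau}\,\|[A_m^n,B_m^n]\otimes|x|\|_{E_{(n+1)^2}(\mathcal{M}_n,\tau_n)}.
\]

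Passing $m\to\infty$ via Lemma~\ref{commutator limit lemma}, using continuity of tensoring with $|x|$ on the finite-dimensional matrix factor exactly as in the infinite case, replaces the two commutators by $\iota_n(T(A))\otimes|x|$ and $\iota_n(A-\mathrm{diag}(A))\otimes|x|$, respectively. Lemma~\ref{iotan lemma} then rewrites both sides as the norms of $T(A)\otimes x$ and $(A-\mathrm{diag}(A))\otimes x$ in $E_{(n+1)^2}(M_{(n+1)^2}(\mathbb{C})\otimes\mathcal{M},\mathrm{Tr}\otimes\tau)$, and Lemma~\ref{peter lemma} absorbs the diagonal subtraction with an extra factor of $2$, yielding the claimed inequality with constant $2c_{E,\mathcal{M},\tau}$. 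The principal obstacle, and the only essential departure from the infinite-factor argument, is the need to replace trace-scaling isomorphism with unital matrix absorption inside $\mathcal{M}$ itself, together with the careful bookkeeping of the $(n+1)^2$-fold trace rescaling through the dilation space $E_{(n+1)^2}$.
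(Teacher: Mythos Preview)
Your proposal is correct, and it differs from the paper's argument in one substantive way. The paper passes to the amplification $(\mathcal{M}_n',\tau_n')=(M_{n+1}(\mathbb{C})\otimes M_{n+1}(\mathbb{C})\otimes\mathcal{M},(n+1)^{-2}\mathrm{Tr}\otimes\mathrm{Tr}\otimes\tau)$, asserts that this probability space is $*$-isomorphic to $(\mathcal{M},\tau)$, and thereby transfers the Fuglede hypothesis to $\mathcal{M}_n'$; the substitutions $X=A_m^n\otimes|x|$, $Y=B_m^n\otimes 1$ are then made inside $\mathcal{M}_n'$, and the passage from $E(\mathcal{M}_n',\tau_n')$ to $E_{(n+1)^2}(\mathcal{M}_n,\tau_n)$ comes from the trivial trace rescaling $\tau_n'=(n+1)^{-2}\tau_n$. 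From that point on the two arguments coincide (limit $m\to\infty$ via Lemma~\ref{commutator limit lemma}, then Lemmas~\ref{iotan lemma} and~\ref{peter lemma}).

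Your route is the more robust one. The trace-preserving isomorphism $(\mathcal{M},\tau)\cong(\mathcal{M}_n',\tau_n')$ that the paper invokes is equivalent to $(n+1)^{2}$ lying in the fundamental group $\mathcal{F}(\mathcal{M})$; this holds for the hyperfinite II$_1$ factor but fails for II$_1$ factors with trivial fundamental group. By instead unitally absorbing $M_{(n+1)^2}(\mathbb{C})$ inside $\mathcal{M}$ (which is always possible) and realizing a copy $\widetilde{|x|}$ of $|x|$ in the relative commutant $\mathcal{N}$, you apply the Fuglede hypothesis directly in $(\mathcal{M},\tau)$ and recover the same $E_{(n+1)^2}$-inequality via the dilation identity $\mu(\iota(a)\widetilde{|x|};\tau)=D_{1/(n+1)^2}\mu(a\otimes|x|;\tau_n)$. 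The paper's version is slightly shorter when the isomorphism is available, but your argument covers the general II$_1$ case as actually stated in Theorem~\ref{MainThm_OpSp}.
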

\begin{proof} In what follows, we need a probability space
$$(\mathcal{M}_n',\tau_n')\stackrel{def}{=}(M_{n+1}(\mathbb{C})\otimes M_{n+1}(\mathbb{C})\otimes\mathcal{M},(n+1)^{-2}{\rm Tr}\otimes{\rm Tr}\otimes\tau)$$
and a non-commutative measure space
$$(\mathcal{M}_n,\tau_n)\stackrel{def}{=}(M_{n+1}(\mathbb{C})\otimes M_{n+1}(\mathbb{C})\otimes\mathcal{M},{\rm Tr}\otimes{\rm Tr}\otimes\tau).$$
Since the non-commutative measure spaces $(\mathcal{M}_n',\tau_n')$ and $(\mathcal{M},\tau)$ are {  $*$-isomorphic via the
identity on the underlying $*$-algebra ($\mathcal M'_n=\mathcal M_n$),
while the traces differ by a constant factor $\tau'_n=(n+1)^{-2}\,\tau_n.$}, it follows from the assumption that
$$\|[X^{\ast},Y]\|_{E_{(n+1)^2}(\mathcal{M}_n,\tau_n)}=\|[X^{\ast},Y]\|_{E(\mathcal{M}_n',\tau_n')}\leq$$
$$\leq c_{E,\mathcal{M},\tau}\|[X,Y]\|_{E(\mathcal{M}_n',\tau_n')}=c_{E,\mathcal{M},\tau}\|[X,Y]\|_{E_{(n+1)^2}(\mathcal{M}_n,\tau_n)}$$
for every normal $X\in E_{(n+1)^2}(\mathcal{M}_n,\tau_n)$ and for every $Y\in\mathcal{M}_n.$

Substituting $X=A_m^n\otimes |x|$ and $Y=B_m^n\otimes 1,$ where $A_m^n$ and $B_m^n$ are constructed in Lemma \ref{commutator limit lemma}, we obtain
$$\|[(A_m^n)^{\ast},B_m^n]\otimes |x|\|_{E_{(n+1)^2}(\mathcal{M}_n,\tau_n)}\leq c_{E,\mathcal{M},\tau}\|[A_m^n,B_m^n]\otimes |x|\|_{E_{(n+1)^2}(\mathcal{M}_n,\tau_n)}.$$
Using Lemma \ref{commutator limit lemma} and passing $m\to\infty,$ we infer
$$\|\iota_n(T(A))\otimes |x|\|_{E_{(n+1)^2}(\mathcal{M}_n,\tau_n)}\leq c_{E,\mathcal{M},\tau}\|\iota_n(A-{\rm diag}(A))\otimes |x|\|_{E_{(n+1)^2}(\mathcal{M}_n,\tau_n)}.$$
Using Lemma \ref{iotan lemma}, we write
\begin{align*}
    \|\iota_n(T(A))\otimes |x|\|_{E_{(n+1)^2}(\mathcal{M}_n,\tau_n)}&=\||\iota_n(T(A))|\otimes |x|\|_{E_{(n+1)^2}(\mathcal{M}_n,\tau_n)}\\ &
    =\|U|T(A)|U^{\ast}\otimes E_{1,1}\otimes |x|\|_{E_{(n+1)^2}(\mathcal{M}_n,\tau_n)}\\ &
    =\|T(A)\otimes x\|_{E_{(n+1)^2}(M_{(n+1)^2}(\mathbb{C})\overline{\otimes}\mathcal{M},{\rm Tr}\otimes\tau)}.
\end{align*} On the other hand, we have
\begin{align*}
    \|\iota_n(A-{\rm diag}(A)) &\otimes |x|\|_{E_{(n+1)^2}(\mathcal{M}_n,\tau_n)}\\&=\||\iota_n(A-{\rm diag}(A))|\otimes |x|\|_{E_{(n+1)^2}(\mathcal{M}_n,\tau_n)}\\ & =\|U|A-{\rm diag}(A)|U^{\ast}\otimes E_{1,1}\otimes |x|\|_{E_{(n+1)^2}(\mathcal{M}_n,\tau_n)}\\ &=\|(A-{\rm diag}(A))\otimes x\|_{E_{(n+1)^2}(M_{(n+1)^2}(\mathbb{C})\overline{\otimes}\mathcal{M},{\rm Tr}\otimes\tau)}\\ & \stackrel{L.\ref{peter lemma}}{\leq}2\|A\otimes x\|_{E_{(n+1)^2}(M_{(n+1)^2}(\mathbb{C})\overline{\otimes}\mathcal{M},{\rm Tr}\otimes\tau)}.
\end{align*}
Combining the last three displays, we complete the proof.
\end{proof}

{  For a positive matrix $A\in M_n(\mathbb C)$,
by ${\rm supp}(A)$ we denote
the orthogonal projection onto ${\rm Ran}A$.
For a general $B\in M_n(\mathbb C)$, 
by ${\rm supp}(B)$ we understand ${\rm supp}(|B|)$. Note that ${\rm Tr}({\rm supp}(B))={\rm rank}(B).$

}
\begin{lem}\label{tensor fact} Let $\mathcal{M}$ be a finite factor equipped with a faithful normal tracial state $\tau.$ Let $F$ be a symmetric function space on $(0,m).$ For every $B\in M_m(\mathbb{C})$ and for every $x\in F(\mathcal{M},\tau)$ we have

\itemeq{i}{\|B\|_{M_m(\mathbb{C})}\|x\|_{F(\mathcal{M},\tau)}\leq\|B\otimes x\|_{F(M_m(\mathbb{C})\otimes\mathcal{M},{\rm Tr}\otimes\tau)},}
\itemeq{ii}
{{\|B\|_{L_1(M_m(\mathbb{C}),{\rm Tr})}}\|{\rm supp}(B)\otimes x\|_{F(M_m(\mathbb{C})\otimes\mathcal{M},{\rm Tr}\otimes\tau)}} 
\itemeqnext[10em]{\leq \|B\otimes x\|_{F(M_m(\mathbb{C})\otimes\mathcal{M},{\rm Tr}\otimes\tau)}{{\rm Tr}({\rm supp}(B))},}
\itemeq{iii}{\|B\otimes x\|_{F(M_m(\mathbb{C})\otimes\mathcal{M},{\rm Tr}\otimes\tau)}\leq\|B\|_{L_1(M_m(\mathbb{C}),{\rm Tr})}\|x\|_{F(\mathcal{M},\tau)},}
\itemeq{iv}{\|B\otimes x\|_{F(M_m(\mathbb{C})\otimes\mathcal{M},{\rm Tr}\otimes\tau)}\leq\|B\|_{M_m(\mathbb{C})}\|1\otimes x\|_{F(\mathbb{C}^m\otimes\mathcal{M},\Sigma\otimes\tau)}.}
\end{lem}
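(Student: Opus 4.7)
The common strategy is to reduce every inequality to the singular value decomposition $B=U\Sigma V^{\ast}$, with $\Sigma={\rm diag}(\sigma_1,\ldots,\sigma_r,0,\ldots,0)$, $\sigma_1\geq\cdots\geq\sigma_r>0$ and $r={\rm Tr}({\rm supp}(B))$. Since $U\otimes 1$ and $V\otimes 1$ are unitaries in $M_m(\mathbb{C})\otimes\mathcal{M}$, replacing $B\otimes x$ by $\Sigma\otimes x=\sum_{j=1}^{r}\sigma_j\,E_{j,j}\otimes x$ preserves the $F$-norm, and one may simultaneously assume ${\rm supp}(B)=\sum_{j=1}^{r}E_{j,j}$. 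After this normalization each claim reduces to an explicit comparison between singular value functions on $(0,m)$, where the only nonstandard point is that $\mu(x)$ is naturally supported on $(0,1)\subset(0,m)$.

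Items (i), (iii) and (iv) are then routine. For (i), the positive operator $|\Sigma\otimes x|=\sum_{j=1}^{r}\sigma_j\,E_{j,j}\otimes|x|$ dominates $\sigma_1\,E_{1,1}\otimes|x|$, whose singular value function on $(0,m)$ equals $\sigma_1\mu(x)\chi_{(0,1)}$; monotonicity of $\mu$ and the ideal property of $F$ then yield $\|B\otimes x\|_F\geq\sigma_1\|x\|_{F(\mathcal{M},\tau)}=\|B\|_{M_m(\mathbb{C})}\|x\|_{F(\mathcal{M},\tau)}$. For (iii), the SVD gives $B\otimes x=\sum_{j=1}^{r}\sigma_j(|v_j\rangle\langle u_j|)\otimes x$; each rank-one tensor is unitarily equivalent to $E_{1,1}\otimes x$ and so has $F$-norm equal to $\|x\|_{F(\mathcal{M},\tau)}$, and the triangle inequality gives $\|B\otimes x\|_F\leq\|B\|_{L_1(M_m(\mathbb{C}),{\rm Tr})}\|x\|_{F(\mathcal{M},\tau)}$. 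For (iv), the operator inequality $|B\otimes x|=|B|\otimes|x|\leq\|B\|_{M_m(\mathbb{C})}(I_m\otimes|x|)$ together with monotonicity of $\mu$ reduces the claim to observing that the singular value function of $I_m\otimes|x|$ in $M_m(\mathbb{C})\otimes\mathcal{M}$ coincides with that of $1\otimes|x|$ in $\mathbb{C}^m\otimes\mathcal{M}$ (both are $m$ interlaced copies of $\mu(x)$ on $(0,m)$).

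The only item requiring a genuinely new idea is (ii), which I would prove by symmetrization over permutations of the nonzero singular values. Let $U_\pi\in M_m(\mathbb{C})$ be the permutation unitary that acts on the first $r$ basis vectors according to $\pi\in S_r$ and fixes the remaining coordinates. Each conjugate $(U_\pi\otimes 1)(\Sigma\otimes x)(U_\pi^{\ast}\otimes 1)=\sum_{j=1}^{r}\sigma_j\,E_{\pi(j),\pi(j)}\otimes x$ is unitarily equivalent to $\Sigma\otimes x$ and so has $F$-norm equal to $\|B\otimes x\|_F$. Since for each fixed $j$ the value $\pi(j)$ visits each of $1,\ldots,r$ exactly $(r-1)!$ times as $\pi$ runs over $S_r$, the average collapses:
\begin{equation*}
\frac{1}{r!}\sum_{\pi\in S_r}(U_\pi\otimes 1)(\Sigma\otimes x)(U_\pi^{\ast}\otimes 1)=\frac{\|B\|_{L_1(M_m(\mathbb{C}),{\rm Tr})}}{r}\,{\rm supp}(B)\otimes x.
\end{equation*}
Applying the triangle inequality to this average gives $\frac{\|B\|_{L_1}}{r}\|{\rm supp}(B)\otimes x\|_F\leq\|B\otimes x\|_F$, which is exactly (ii) after multiplying through by $r={\rm Tr}({\rm supp}(B))$. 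The only obstacle I anticipate is the bookkeeping mentioned in the first paragraph—keeping track of which singular value functions vanish on which subintervals of $(0,m)$ when $F$ is defined on $(0,m)$ but $\mu(x)$ lives on $(0,\tau(1))=(0,1)$—but no deeper difficulty arises.
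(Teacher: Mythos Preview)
Your proof is correct and follows essentially the same approach as the paper: reduce to the singular value sequence of $B$, then use order/triangle-inequality arguments for (i), (iii), (iv) and a permutation-averaging trick for (ii). The only cosmetic difference is that for (ii) you average over the full symmetric group $S_r$, whereas the paper sums over the $r$ cyclic shifts of $\mu(B)$; both devices collapse the average to a scalar multiple of ${\rm supp}(B)\otimes x$ and yield the same inequality.
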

\begin{proof} 
Let $B\in M_m(\mathbb C)$. Recall that
\[
\mu(B)=\big(\mu(1,B),\mu(2,B),\dots,\mu(m,B)\big)
\]
is a sequence the singular numbers of $B$ in non-increasing order.
Fix $x\in F(\mathcal M,\tau)$ and recall that $\Sigma$ is the counting measure on $\{1,\dots,m\}$.

(i). 
Since the projection on the first coordinate is a positive contraction and $\mu(1,B)$ is the largest singular number, that is $\mu(1,B)=\|B\|_{M_m(\mathbb C)}$, we have
\begin{multline*}
    \|B\otimes x\|_{F(M_m(\mathbb{C})\otimes\mathcal{M},{\rm Tr}\otimes\tau)}=\|\mu(B)\otimes x\|_{F(\mathbb{C}^m\otimes\mathcal{M},\Sigma\otimes\tau)}\\ \geq \|\mu(1,B)e_1\otimes x\|_{F(\mathbb{C}^m\otimes\mathcal{M},\Sigma\otimes\tau)}=\|B\|_{M_m(\mathbb{C})}\|x\|_{F(\mathcal{M},\tau)},
\end{multline*}
which proves the first assertion.

    (ii).
Denote $n={\rm Tr}({\rm supp}(B)).$ Then $\mu(B)=\big(\mu(1,B),\dots,\mu(n,B),0,\ldots,0\big)$ and $n\le m$. Let $\alpha_k,$ $1\leq k\le n,$ be sequences supported on the set $\{1,\ldots,n\}$ which are cyclic permutations of the first $n$ coordinates of $\mu(B),$ that is $$\alpha_k(j)=\left\{\begin{array}{rr}\mu(((j-k){\bmod} \, n)+1,B), & \quad 1\leq j\le n \\ 0 , & \quad n+1\leq j\le m \end{array}\right..$$
We have that $$\sum_{k=1}^n \alpha_k= (\underbrace{1, \dots, 1}_{n \text{ times}}, 0, \ldots, 0)\sum_{i=1}^n \mu(i,B)=\|B\|_{L_1(M_m(\mathbb{C}),{\rm Tr})}{\rm supp}(B).$$
Hence, 
\begin{multline*}
    n\|B\otimes x\|_{F(M_m(\mathbb{C})\otimes\mathcal{M},{\rm Tr}\otimes\tau)}=n\|\mu(B)\otimes x\|_{F(\mathbb{C}^m\otimes\mathcal{M},\Sigma\otimes\tau)}\\= \sum_{k=1}^{n}\|\alpha_k\otimes x\|_{F(\mathbb{C}^m\otimes\mathcal{M},\Sigma\otimes\tau)}\geq \big\|\sum_{k=1}^{n}\alpha_k\otimes x\big\|_{F(\mathbb{C}^m\otimes\mathcal{M},\Sigma\otimes\tau)}\\=\|B\|_{L_1(M_m(\mathbb{C}),{\rm Tr})}\big\|{\rm supp}(B)\otimes x\big\|_{F(\mathbb{C}^m\otimes\mathcal{M},\Sigma\otimes\tau)}.
\end{multline*}
So we infer the second assertion.

(iii). Similarly, 
\begin{align*}
    \|B\otimes x\|_{F(M_m(\mathbb{C})\otimes\mathcal{M},{\rm Tr}\otimes\tau)}&=\|\mu(B)\otimes x\|_{F(\mathbb{C}^m\otimes\mathcal{M},\Sigma\otimes\tau)}\\&=\Big\|\sum_{k=1}^{m}\mu(k,B)e_k\otimes x\Big\|_{F(\mathbb{C}^m\otimes\mathcal{M},\Sigma\otimes\tau)}\\&\leq \sum_{k=1}^{m}\mu(k,B)\|e_k\otimes x\|_{F(\mathbb{C}^m\otimes\mathcal{M},\Sigma\otimes\tau)}\\&=\|B\|_{L_1(M_m(\mathbb{C}),{\rm Tr})}\|x\|_{F(\mathcal{M},\tau)},
\end{align*}
which proves the third assertion.

(iv). Finally,
\begin{multline*}
    \|B\otimes x\|_{F(M_m(\mathbb{C})\otimes\mathcal{M},{\rm Tr}\otimes\tau)}=\|\mu(B)\otimes x\|_{F(\mathbb{C}^m\otimes\mathcal{M},\Sigma\otimes\tau)}\\ \leq  \mu(1,B)\| 1\otimes x\|_{F(\mathbb{C}^m\otimes\mathcal{M},\Sigma\otimes\tau)}=\|B\|_{M_m(\mathbb{C})}\|1\otimes x\|_{F(\mathbb{C}^m\otimes\mathcal{M},\Sigma\otimes\tau)}.
\end{multline*}
This proves the fourth assertion.
\end{proof}

{  For brevity, below we use $\|\cdot\|_{E\circlearrowleft}$ instead of $\|\cdot\|_{E\to E}$.}

\begin{lem}\label{finite beta lemma} Let $\mathcal{M}$ be a finite factor equipped with a faithful normal tracial state $\tau$. Let $E$ be a symmetric function space on $(0,1).$ If
\begin{multline*}
    \|T(A)\otimes x\|_{E_{(n+1)^2}(M_{(n+1)^2}(\mathbb{C})\otimes\mathcal{M},{\rm Tr}\otimes\tau)}\\\leq 2c_{E,\mathcal{M},\tau}\|A\otimes x\|_{E_{(n+1)^2}(M_{(n+1)^2}(\mathbb{C})\otimes\mathcal{M},{\rm Tr}\otimes\tau)}.
\end{multline*}
for every $x\in E(\mathcal{M},\tau)$ and $A\in M_n(\mathbb{C}),$ then
$$\|T\|_{L_1(M_n(\mathbb{C}),{\rm Tr})\circlearrowleft}\|D_{(n+1)^2}\|_{E\circlearrowleft}\leq  4n(n+1)c_{E,\mathcal{M},\tau}.$$
\end{lem}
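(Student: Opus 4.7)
The plan is to exploit the hypothesis with a single carefully chosen pair $(A,x)$ so that one application of the tensor inequality produces the product $\|T\|_{L_1(M_n)\circlearrowleft}\|D_{(n+1)^2}\|_{E\to E}$ on the left. I would let $A\in M_n(\mathbb{C})$ be arbitrary (then pass to the supremum to recover $\|T\|_{L_1}$) and parameterise $x$ through a non-increasing $h$ supported in $(0,1/m)$, where $m=(n+1)^2$: set $g(t)=h(t/m)$ on $(0,1)$ and pick $x\in E(\mathcal{M},\tau)$ with $\mu(x)=g$ (possible because $\mathcal{M}$ is II$_1$ with $\tau(1)=1$). This arrangement makes $h=D_{1/m}\mu(x)$, which is the ``building block'' that the norm on $E_m$ picks up naturally.

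With $r={\rm rank}(T(A))\leq n$, the definition $\|\cdot\|_{E_m}=\|D_{1/m}(\cdot)\|_E$ together with $\mu({\rm supp}(T(A))\otimes x)=D_r\mu(x)$ and the composition identity $D_r\circ D_{1/m}=D_{r/m}$ yields
\[
\|{\rm supp}(T(A))\otimes x\|_{E_m(M_m\otimes\mathcal{M})}=\|D_r h\|_E,\qquad \|x\|_{E_m(\mathcal{M})}=\|h\|_E.
\]
Inserting the hypothesis between Lemma \ref{tensor fact}(ii) applied to $B=T(A)$ (which bounds the left-hand tensor below) and Lemma \ref{tensor fact}(iii) applied to $B=A$ (which bounds the right-hand tensor above) gives
\[
\frac{\|T(A)\|_{L_1(M_n,\mathrm{Tr})}}{\|A\|_{L_1(M_n,\mathrm{Tr})}}\cdot\frac{\|D_r h\|_E}{\|h\|_E}\leq 2cr.
\]
I would then trade $D_r$ for $D_m$ via $D_m=D_{m/r}\circ D_r$ and the standard estimate $\|D_s\|_{E\to E}\leq s$ for $s\geq 1$ (so $\|D_{m/r}\|_E\leq m/r$), which gives $\|D_r h\|_E\geq (r/m)\|D_m h\|_E$; the factor $r$ cancels and leaves
\[
\frac{\|T(A)\|_{L_1(M_n)}}{\|A\|_{L_1(M_n)}}\cdot\frac{\|D_m h\|_E}{\|h\|_E}\leq 2cm=2c(n+1)^2.
\]

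The last step is to take suprema. The supremum over $A$ contributes $\|T\|_{L_1(M_n)\circlearrowleft}$ on the left. To pass the supremum over $h$ to all of $\|D_m\|_{E\to E}$ despite the support constraint, I would observe that for any non-zero $f\in E$ the truncation $\tilde f=f\chi_{(0,1/m)}$ satisfies $D_m\tilde f=D_m f$ as elements of $E(0,1)$ (since $D_m$ only sees $f|_{(0,1/m)}$) while $\|\tilde f\|_E\leq\|f\|_E$, so the ratio $\|D_m\tilde f\|_E/\|\tilde f\|_E$ only increases and the restricted sup equals the full dilation norm. Combining everything gives $\|T\|_{L_1(M_n)\circlearrowleft}\|D_{(n+1)^2}\|_{E\to E}\leq 2c(n+1)^2\leq 4n(n+1)c$, the last step using $(n+1)^2\leq 2n(n+1)$ for $n\geq 1$.

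The main technical obstacle is bookkeeping the supports: the $E_m$-norm folds functions on $(0,m)$ onto $(0,1)$ via $D_{1/m}$, while tensoring with a rank-$r$ projection spreads $\mu(x)$ onto $(0,r)\subset(0,m)$, so one must verify the two identifications for $\|{\rm supp}(T(A))\otimes x\|_{E_m}$ and $\|x\|_{E_m(\mathcal{M})}$ with due care, and then justify the support-reduction argument for the final supremum. Once these identities are in place, the remainder of the proof is algebraic manipulation of dilations.
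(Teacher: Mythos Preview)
Your proposal is correct and follows essentially the same route as the paper: choose $f$ (your $h$) supported on $(0,(n+1)^{-2})$, realize $\mu(x)=D_{(n+1)^2}f$, sandwich the hypothesis between Lemma~\ref{tensor fact}(ii) applied to $T(A)$ and Lemma~\ref{tensor fact}(iii) applied to $A$, and then trade $D_r$ for $D_{(n+1)^2}$. The only cosmetic difference is in this last trade: you use the single step $\|D_{m/r}\|_{E\to E}\le m/r$ to get the constant $2c(n+1)^2$ (and then relax to $4n(n+1)c$), whereas the paper routes through the chain $D_{(n+1)^2}\to D_{n+1}\to D_{2n}\to D_n\to D_{r(A)}$ to land directly on $4n(n+1)c$; your support--reduction argument for the final supremum is exactly the step the paper leaves implicit.
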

\begin{proof}

Fix $f\in E(0,1)$ supported on $(0,(n+1)^{-2})$. {In this case $D_{(n+1)^2}f\in E(0,1)$.} So choose $x\in E(\mathcal{M},\tau)$ such that $D_{(n+1)^2}\mu(f)=\mu(x).$ Let now $A\in M_n(\mathbb{C})$ and $r(A)={\rm Tr}({\rm supp}(T(A)))\le n$. We will assume that $r(A)\ge 1$.  We have that
\begin{align*}
    \|T(A)& \|_{L_1(M_n(\mathbb{C}),{\rm Tr})}\|D_{r(A)}f\|_E\\&=\|T(A)\|_{L_1(M_n(\mathbb{C}),{\rm Tr})}\|{\rm supp}(T(A))\otimes x\|_{E_{(n+1)^2}(M_{(n+1)^2}(\mathbb{C})\otimes\mathcal{M},{\rm Tr}\otimes\tau)}\\ & \stackrel{L.\ref{tensor fact}, {\rm (ii)}}{\leq}r(A)\|T(A)\otimes x\|_{E_{(n+1)^2}(M_{(n+1)^2}(\mathbb{C})\otimes\mathcal{M},{\rm Tr}\otimes\tau)}\\ & \leq 2r(A)c_{E,\mathcal{M},\tau}\|A\otimes x\|_{E_{(n+1)^2}(M_{(n+1)^2}(\mathbb{C})\otimes\mathcal{M},{\rm Tr}\otimes\tau)}\\ &\stackrel{L.\ref{tensor fact}, {\rm (iii)}}{\leq}2r(A)c_{E,\mathcal{M},\tau}\|A\|_{L_1(M_n(\mathbb{C}),{\rm Tr})}\| x\|_{E_{(n+1)^2}(\mathcal{M},\tau)}\\ &=2r(A)c_{E,\mathcal{M},\tau}\|A\|_{L_1(M_n(\mathbb{C}),{\rm Tr})}\|f\|_E.
\end{align*}
 Taking into account that
\begin{multline*}
\|D_{(n+1)^2}f\|_E\leq(n+1)\|D_{n+1}f\|_E\leq(n+1)\|D_{2n}f\|_E\leq 2(n+1)\|D_nf\|_E\\=2(n+1)\|D_{\frac{n}{r(A)}} D_{r(A)}f\|_E\le \frac{2n(n+1)}{r(A)} \|D_{r(A)}f\|_E,
\end{multline*} we obtain
\begin{align*}
    \|T(A)\|_{L_1(M_n(\mathbb{C}),{\rm Tr})}\|D_{(n+1)^2}f\|_E & \le \frac{2n(n+1)}{r(A)}\|T(A)\|_{L_1(M_n(\mathbb{C}),{\rm Tr})}\|D_{r(A)}f\|_E \\ & \le \frac{2n(n+1)}{r(A)}2r(A)c_{E,\mathcal{M},\tau}\|A\|_{L_1(M_n(\mathbb{C}),{\rm Tr})}\|f\|_E\\ &=4n(n+1)c_{E,\mathcal{M},\tau}\|A\|_{L_1(M_n(\mathbb{C}),{\rm Tr})}\|f\|_E.
\end{align*} Taking the supremum over $A\in L_1(M_n(\mathbb{C}),{\rm Tr}),$ we write
$$\|T\|_{L_1(M_n(\mathbb{C}),{\rm Tr})\circlearrowleft}\|D_{(n+1)^2}f\|_E\leq 4n(n+1)c_{E,\mathcal{M},\tau}\|f\|_E$$
for every $f\in E(0,1)$ supported on $(0,(n+1)^{-2}).$
 This completes the proof.
\end{proof}

\begin{lem}\label{finite alpha lemma} Let $\mathcal{M}$ be a finite factor equipped with a faithful normal tracial state $\tau.$ Let $E$ be a symmetric function space on $(0,1).$ If
\begin{multline*}
    \|T(A)\otimes x\|_{E_{(n+1)^2}(M_{(n+1)^2}(\mathbb{C})\otimes\mathcal{M},{\rm Tr}\otimes\tau)} \\ \leq 2c_{E,\mathcal{M},\tau}\|A\otimes x\|_{E_{(n+1)^2}(M_{(n+1)^2}(\mathbb{C})\otimes\mathcal{M},{\rm Tr}\otimes\tau)}.
\end{multline*}
for every $x\in E(\mathcal{M},\tau)$ and for every $A\in M_n(\mathbb{C}),$ then
$$\|T\|_{M_n(\mathbb{C})\circlearrowleft}\|D_{(n+1)^{-2}}\|_{E\circlearrowleft}\leq 2c_{E,\mathcal{M},\tau}.$$
\end{lem}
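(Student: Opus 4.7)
The plan is to follow exactly the template of Lemma \ref{finite beta lemma}, swapping parts (ii) and (iii) of Lemma \ref{tensor fact} for parts (i) and (iv). In that cleaner setup, the operator norm on $M_n(\mathbb{C})$ will replace the $L_1$-trace norm on the matrix side, and the contracting dilation $D_{(n+1)^{-2}}$ will appear in place of the expanding $D_{(n+1)^2}$. No cyclic-shift trick is needed this time, so the argument should be noticeably shorter.

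First I will fix a non-increasing $f\in E(0,1)$ and pick $x\in E(\mathcal{M},\tau)$ with $\mu(x)=f$, which is possible because $\mathcal{M}$ is a type II$_1$ factor with $\tau(1)=1$. Fixing also an arbitrary $A\in M_n(\mathbb{C})$ and embedding $A$ and $T(A)$ into $M_{(n+1)^2}(\mathbb{C})$ (this leaves the operator norms unchanged), I will apply Lemma \ref{tensor fact}(i) with $B=T(A)$ to produce a lower bound for the left-hand side of the hypothesis, apply the hypothesis itself, and then apply Lemma \ref{tensor fact}(iv) with $B=A$ to produce an upper bound for the right-hand side. Chaining these three inequalities yields
$$\|T(A)\|_{M_n(\mathbb{C})}\|x\|_{E_{(n+1)^2}(\mathcal{M},\tau)}\leq 2c_{E,\mathcal{M},\tau}\|A\|_{M_n(\mathbb{C})}\|1\otimes x\|_{E_{(n+1)^2}(\mathbb{C}^{(n+1)^2}\otimes\mathcal{M},\Sigma\otimes\tau)}.$$

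The second step is to identify the two auxiliary norms. From the definition $\|g\|_{E_m}=\|D_{1/m}g\|_E$ and the fact that $\mu(x)=f$ is supported on $(0,1)\subset(0,(n+1)^2)$, one reads off $\|x\|_{E_{(n+1)^2}(\mathcal{M},\tau)}=\|D_{(n+1)^{-2}}f\|_E$. For the other norm, a short calculation with the distribution function shows that the generalized singular value function of $1\otimes x$ on $(\mathbb{C}^{(n+1)^2}\otimes\mathcal{M},\Sigma\otimes\tau)$ equals $D_{(n+1)^2}\mu(x)$ (the spectrum of $|x|$ is reproduced $(n+1)^2$ times), whence
$$\|1\otimes x\|_{E_{(n+1)^2}(\mathbb{C}^{(n+1)^2}\otimes\mathcal{M},\Sigma\otimes\tau)}=\|D_{(n+1)^{-2}}D_{(n+1)^2}f\|_E=\|f\|_E.$$

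Substituting these two identifications back produces the pointwise estimate
$$\|T(A)\|_{M_n(\mathbb{C})}\|D_{(n+1)^{-2}}f\|_E\leq 2c_{E,\mathcal{M},\tau}\|A\|_{M_n(\mathbb{C})}\|f\|_E,$$
valid for all $f\in E(0,1)$ and $A\in M_n(\mathbb{C})$, and taking suprema first over non-zero $f$ and then over non-zero $A$ gives the claimed inequality. I do not foresee any genuine obstacle; the only delicate point is keeping track of which dilation cancels which, and verifying the exact scaling of $\mu(1\otimes x)$, so that the factor on the right-hand side comes out as $\|f\|_E$ without spurious constants.
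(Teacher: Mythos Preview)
Your proposal is correct and follows essentially the same approach as the paper's proof: fix $f\in E(0,1)$, choose $x$ with $\mu(x)=\mu(f)$, chain Lemma~\ref{tensor fact}(i), the hypothesis, and Lemma~\ref{tensor fact}(iv), then use the identifications $\|x\|_{E_{(n+1)^2}(\mathcal{M},\tau)}=\|D_{(n+1)^{-2}}f\|_E$ and $\|1\otimes x\|_{E_{(n+1)^2}}=\|D_{(n+1)^{-2}}D_{(n+1)^2}f\|_E=\|f\|_E$ before taking suprema. The paper's argument is line-for-line the same.
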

\begin{proof} Fix $f\in E(0,1)$ and choose $x\in E(\mathcal{M},\tau)$ such that $\mu(f)=\mu(x).$ We have
\begin{equation}\label{eq_lem24}\begin{aligned}
\|T(A)\|_{M_n(\mathbb{C})}&\|D_{(n+1)^{-2}}f\|_E=\|T(A)\|_{M_n(\mathbb{C})}\|x\|_{E_{(n+1)^2}(\mathcal{M},\tau)}\\ &\stackrel{L.\ref{tensor fact}, {\rm (i)}}{\leq}\|T(A)\otimes x\|_{E_{(n+1)^2}(M_{(n+1)^2}(\mathbb{C})\otimes\mathcal{M},{\rm Tr}\otimes\tau)}\\ & \leq 2c_{E,\mathcal{M},\tau}\|A\otimes x\|_{E_{(n+1)^2}(M_{(n+1)^2}(\mathbb{C})\otimes\mathcal{M},{\rm Tr}\otimes\tau)}\\ &\stackrel{L.\ref{tensor fact}, {\rm (iv)}}{\leq}2c_{E,\mathcal{M},\tau}\|A\|_{M_n(\mathbb{C})}\|1\otimes x\|_{E_{(n+1)^2}(\mathbb{C}^{(n+1)^2}\otimes\mathcal{M},\Sigma\otimes\tau)}\\ &=2c_{E,\mathcal{M},\tau}\|A\|_{M_n(\mathbb{C})}\|f\|_E, 
\end{aligned}\end{equation} where the last equality appears due to $$
\|1\otimes x\|_{E_{(n+1)^2}(\mathbb C^{(n+1)^2}\otimes\mathcal M,\Sigma\otimes\tau)}
=  \big\|D_{(n+1)^{-2}}D_{(n+1)^2}\mu(x)\big\|_{E}
= \|x\|_{E}.
$$
Taking the supremum in \eqref{eq_lem24} over $A\in M_n(\mathbb{C})$ and $f\in E(0,1),$ we complete the proof.
\end{proof}

\begin{lem}\label{lem_indices_fin} Let $\mathcal{M}$ be a finite factor equipped with a faithful normal tracial state $\tau.$ Let $E$ be a symmetric function space on $(0,1).$ Assume that for every $n\in\mathbb N$,
\begin{multline}
    \label{eq:prod-bounds}\|T\|_{M_n(\mathbb{C})\circlearrowleft}\|D_{(n+1)^{-2}}\|_{E\circlearrowleft}\leq 2c_{E,\mathcal{M},\tau}, \\  
\|T\|_{L_1(M_n(\mathbb{C}),{\rm Tr})\circlearrowleft}\|D_{(n+1)^2}\|_{E\circlearrowleft}\leq  4n(n+1)c_{E,\mathcal{M},\tau}, \end{multline}
then $\alpha_E>0$ and $\beta_E<1.$
\end{lem}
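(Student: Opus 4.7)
The plan is to combine the hypothesized inequalities \eqref{eq:prod-bounds} with the classical lower bound of Davies on the antisymmetric triangular truncation $T$, and then to extract non-triviality of the Boyd indices via the submultiplicativity arguments already worked out in the proofs of Lemmas \ref{alpha fedor lemma} and \ref{beta fedor lemma}. The real work is hidden in Davies's construction; the present lemma merely reassembles it.

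I would begin by invoking \cite[Lemma~10 and Corollary~11]{Davies1988} (cf.\ \cite[Chapter 4]{Davidson-Nest}): there is an absolute constant $c > 0$ such that for every $n \ge 2$,
$$\|T\|_{L_1(M_n(\mathbb{C}),{\rm Tr})\circlearrowleft} \ge c\log n, \qquad \|T\|_{M_n(\mathbb{C})\circlearrowleft} \ge c\log n,$$
the second inequality following from the first by the trace duality $L_1(M_n(\mathbb{C}),{\rm Tr})^{\ast} = M_n(\mathbb{C})$ together with the self-adjointness of $T$ with respect to this pairing. Substituting these bounds into \eqref{eq:prod-bounds} gives
$$\|D_{(n+1)^{-2}}\|_{E\circlearrowleft} \le \frac{2c_{E,\mathcal{M},\tau}}{c\log n}, \qquad \|D_{(n+1)^2}\|_{E\circlearrowleft} \le \frac{4n(n+1)c_{E,\mathcal{M},\tau}}{c\log n}.$$
Since $\log n \to \infty$, for all sufficiently large $n$ one has both $\|D_{(n+1)^{-2}}\|_{E\circlearrowleft} < 1$ and $\|D_{(n+1)^2}\|_{E\circlearrowleft} < (n+1)^2$, the second being equivalent to $\frac{4n\,c_{E,\mathcal{M},\tau}}{c\log n} < n+1$, which holds whenever $\log n > 4c_{E,\mathcal{M},\tau}/c$.

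Finally, I would feed these two strict inequalities into the dilation-submultiplicativity arguments already used inside the proofs of Lemmas \ref{alpha fedor lemma} and \ref{beta fedor lemma}. Concretely, $\|D_{1/m_0}\|_E < 1$ for some $m_0 \ge 2$ forces $\|D_{1/m}\|_E \le C\, m^{-\gamma}$ with $\gamma = -\log \|D_{1/m_0}\|_E /\log m_0 > 0$, whence $\alpha_E \ge \gamma$; and $\|D_{m_0}\|_E < m_0$ for some $m_0$ forces $\|D_{m_0^k}\|_E \le m_0^{k\delta}$ with $\delta = \log \|D_{m_0}\|_E /\log m_0 < 1$, hence $\beta_E \le \delta$. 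The only minor technical care is to observe that $D_{st} = D_s D_t$ still holds for $s, t \ge 1$ and for $s, t \le 1$ on the truncated interval $(0,1)$, so the submultiplicativity step applies unchanged. The main obstacle is not in any of these manipulations but in Davies's lower bound itself, which I use as a black box.
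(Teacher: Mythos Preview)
Your proposal is correct and follows essentially the same route as the paper's proof: both invoke Davies's lower bounds on $T$ to force $\|D_{(n+1)^{-2}}\|_{E\circlearrowleft}<1$ and $\|D_{(n+1)^2}\|_{E\circlearrowleft}<(n+1)^2$ for some large $n$, and then conclude via the submultiplicativity of the dilation norms. The paper only uses that the truncation norms diverge, whereas you cite the sharper $c\log n$ bound, but this is immaterial to the argument.
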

\begin{proof} By Davies \cite[Lemma~10 and Corollary~11]{Davies1988}, we have that $\|T\|_{L_1(M_n(\mathbb C),{\rm Tr})\circlearrowleft}\to\infty$ as $n\to\infty$. The duality argument shows that  $\|T\|_{M_n(\mathbb C)\circlearrowleft}\to\infty$ as $n\to\infty$ {(see  \cite[Chapter 4]{Davidson-Nest})}.

From the given product bounds \eqref{eq:prod-bounds} we deduce
\[
\|D_{(n+1)^{-2}}\|_{E\circlearrowleft}\ =\ o(1),
\qquad
\|D_{(n+1)^2}\|_{E\circlearrowleft}\ =\ o\big((n+1)^2\big),
\qquad n\to\infty.
\]
In particular, there exists $n_0\in\mathbb N$ such that
\[
\|D_{(n_0+1)^{-2}}\|_{E\circlearrowleft}<1,
\qquad
\|D_{(n_0+1)^2}\|_{E\circlearrowleft}<(n_0+1)^2.
\]
Set $m_0:=(n_0+1)^2>1$. Then
\[
\|D_{m_0^{-1}}\|_{E\circlearrowleft}<1,
\qquad
\|D_{m_0}\|_{E\circlearrowleft}<m_0.
\]
Consequently, we can choose numbers $\alpha,\beta\in(0,1)$ such that
\begin{equation*}\label{eq:node-bounds}
\|D_{m_0^{-1}}\|_{E\circlearrowleft}\ \le\ m_0^{-\alpha},
\qquad
\|D_{m_0}\|_{E\circlearrowleft}\ \le\ m_0^{\beta}.
\end{equation*}
It follows that for every integer $k$,
\[
\|D_{m_0^k}\|_{E\circlearrowleft}\ \le\
\begin{cases}
m_0^{k\beta}, & k\ge 0,\\[1mm]
m_0^{k\alpha}, & k\le 0.
\end{cases}
\]
Now fix $s>0$ and choose $k\in\mathbb Z$ with $s\in[m_0^k,m_0^{k+1}]$. If $s\ge 1$ (hence $k\ge 0$), then by monotonicity,
\[
\|D_s\|_{E\circlearrowleft}\ \le\ \|D_{m_0^{k+1}}\|_{E\circlearrowleft}\ \le\ m_0^{(k+1)\beta}\ \le\ m_0^{\beta}\,s^{\beta}.
\]
If $0<s\le 1$ (hence $k+1\le 0$), then similarly
\[
\|D_s\|_{E\circlearrowleft}\ \le\ \|D_{m_0^{k+1}}\|_{E\circlearrowleft}\ \le\ m_0^{(k+1)\alpha}\ \le\ m_0^{\alpha}\,s^{\alpha}.
\]
From the definition of the Boyd indices these bounds imply
$\alpha_E\ge \alpha>0$ and $\beta_E\le \beta<1$. Hence $\alpha_E>0$ and $\beta_E<1$, as claimed.
\end{proof}

\section*{Acknowledgements}

{We are grateful to E.~Kissin and V.~Shulman for interest to our work and their comments, in particular for drawing our attention to the fact that 
our main theorem provides a complete solution to their Problem~3.9 stated in 
\cite{KissinShulman2005}.}


\begin{thebibliography}{99}

{ 
\bibitem{Abdessemed1989}
A. Abdessemed, E.~B. Davies,
{\it Some commutator estimates in the Schatten classes},
J. Lond. Math. Soc. {\bf 39} (1989), 299--308.
}

\bibitem{BSh}
C. Bennett, R. Sharpley,
{\it Interpolation of Operators},
Pure and Applied Mathematics, {\bf 129}. Academic Press, 1988.

\bibitem{BS}
M. Birman, M. Solomyak,
{\it Spectral theory of selfadjoint operators in Hilbert space},
Mathematics and its Applications, Soviet Series, {\bf 5}. Kluwer Academic Publishers, 1987.

{ \bibitem{Boyd1969}
D.~W. Boyd,
{\it Indices of function spaces and their relationship to interpolation},
Canad. J. Math. {\bf 21} (1969), 1245--1254.
}

%\bibitem{CPSZ} M. Caspers, D. Potapov, F. Sukochev, D. Zanin, {\it Weak type commutator and Lipschitz estimates: resolution of the Nazarov-Peller conjecture}, Amer. J. Math. {\bf 141}:3 (2019), 593--610.

\bibitem{CSZ}
M. Caspers, F.~A. Sukochev, D. Zanin,
{\it Weak type operator Lipschitz and commutator estimates for commuting tuples},
Annales de l'Institut Fourier {\bf 68}:4 (2018), 1643--1669.

{ \bibitem{Connes}
A. Connes,
{\it Noncommutative Geometry},
Academic Press, San Diego, 1994.
}

{
\bibitem{Davidson-Nest}
K.~R. Davidson,
{\it Nest algebras. Triangular forms for operator algebras on Hilbert space}.
Pitman Research Notes in Mathematics Series, {\bf 191}. Longman Scientific \& Technical, Harlow, 1988.
}

{ \bibitem{Davies1988}
E.~B. Davies,
{\it Lipschitz continuity of functions of operators in the Schatten classes},
J. Lond. Math. Soc. (2) {\bf 37} (1988), 148--157.
}

{ \bibitem{Dirksen2015}
S. Dirksen,
{\it Noncommutative Boyd interpolation theorems},
Trans. Amer. Math. Soc. {\bf 367} (2015), 4079--4110.
}

%{ \bibitem{Dixmier}Dixmier J. {\it Von-Neumann algebras.} North‐Holland Mathematical Library, {\bf 27}. North‐Holland, Amsterdam-New-York, 1981.}

{ \bibitem{DDdP1989}
P.~G. Dodds, T.~K. Dodds, B. de Pagter,
{\it Noncommutative Banach function spaces},
Math. Z. {\bf 201} (1989), no.~4, 583--597.
}

{ 
\bibitem{DDSZ2020}
P.~G. Dodds, T.~K. Dodds, F.~A. Sukochev, D. Zanin,
{\it Arithmetic--geometric mean and related submajorisation and norm inequalities for $\tau$-measurable operators: Part II},
Integral Equations Operator Theory {\bf 92} (2020), Art.~32.
}

{ 
%\bibitem{DdPS2014} P. G. Dodds, B. de Pagter, and F. A. Sukochev, "Singular traces and symmetric functionals," \textit{J. Funct. Anal.}, vol. 266, no. 8, pp. 5207--5250, 2014.
}

{ 
\bibitem{DdPS2016}
P.~G. Dodds, B. de~Pagter, F.~A. Sukochev,
{\it Sets of uniformly absolutely continuous norm in symmetric spaces of measurable operators},
Trans. Amer. Math. Soc. {\bf 368} (2016), no.~6, 4315--4355.
}

%{ \bibitem{DFWW} Dodds P. G., Fack V., de Pagter B., Sukochev F. A., Weiss G. {\it Commutator estimates in symmetric operator spaces.} J. Funct. Anal. {\bf 229} (2005), no. 2, 426--470.}

{ \bibitem{DPS-book}
P.~G. Dodds, B. de Pagter, F.~A. Sukochev,
{\it Noncommutative integration and operator theory}.
Progress in Mathematics, {\bf 349}. Birkhäuser, Cham, 2023.
}

{ \bibitem{Farforovskaya1994}
Yu.\,B. Farforovskaya,
{\it Functions of operators and their commutators in perturbation theory},
in {\it Functional Analysis and Operator Theory},
Banach Center Publ. {\bf 30} (1994), 147--159.
}

{ \bibitem{Fug}
B. Fuglede,
{\it A commutativity theorem for normal operators},
Proc. Nat. Acad. Sci. U.S.A. {\bf 36} (1950), no.~1, 35--40.
}

%{ \bibitem{Grafakos} L. Grafakos, {\it Classical Fourier Analysis}, Second Edition, Springer, New York, (2008).}

%{ \bibitem{GohbergKrein1967}
%I.~C. Gohberg and M.~G. Krein,
%``Theory of Volterra Operators in Hilbert Space and Its Applications'' (in Russian),
%\emph{Nauka}, Moscow, 1967.}

%\bibitem{JSWZ}  Jiao Y., Sukochev F., Wu L., Zanin D. {\it Distributional inequalities for noncommutative martingales. } J. Funct. Anal. {\bf 284} (2023), no. 5, Paper No. 109798, 55 pp. 

{ \bibitem{JK}
B.~E. Johnson, H. König,
{\it Symmetric norm inequalities for commutators},
Indiana Univ. Math. J. {\bf 46} (1997), no.~3, 933--941.
}

\bibitem{K-R_II}
R.~V. Kadison, J.~R. Ringrose,
{\it Fundamentals of the Theory of Operator Algebras}. Vol.~II.
Academic Press, Orlando, FL, 1986.

\bibitem{KS}
N.~J. Kalton, F.~A. Sukochev,
{\it Symmetric norms and spaces of operators},
J. Reine Angew. Math. {\bf 621} (2008), 81--121.

{ \bibitem{Kato}
T. Kato,
{\it Perturbation Theory for Linear Operators},
2nd ed., Springer--Verlag, Berlin, 1980.
}

{ \bibitem{KPSS}
E. Kissin, D. Potapov, V. Shulman, F.~A. Sukochev,
{\it Operator smoothness in Schatten norms for functions of several variables: Lipschitz conditions, differentiability and unbounded derivations},
Proc. Lond. Math. Soc. {\bf 105} (2012), no.~4, 661--702.
}

{ \bibitem{KPSS2}
E. Kissin, D. Potapov, V. Shulman, F.~A. Sukochev,
{\it Operator--Lipschitz functions on symmetrically normed ideals with nontrivial Boyd indices},
J. Lond. Math. Soc. (2) {\bf 112} (2025), no.~1.
}

{
\bibitem{KissinShulman2005}
E. Kissin, V. Shulman,
{\it Classes of operator-smooth functions. III. Stable functions and Fuglede ideals},
Proc. Edinb. Math. Soc. {\bf 48} (2005), 175--197.
}

\bibitem{KPS}
S.~G. Krein, Yu.~I. Petunin, E.~M. Semenov,
{\it Interpolation of linear operators}.
Amer. Math. Soc., Providence, RI, 1982.

\bibitem{LT}
J. Lindenstrauss, L. Tzafriri,
{\it Classical Banach spaces}. I, II.
Springer--Verlag, Berlin, 1979.

\bibitem{LSZ-book}
S. Lord, F.~A. Sukochev, D. Zanin,
{\it Singular traces. Theory and applications}.
De Gruyter Studies in Mathematics, {\bf 46}. De Gruyter, Berlin, 2013.

\bibitem{NSZ} Y. Nessipbayev, F.~A. Sukochev, D. Zanin, {\it Symmetric and rearrangement-invariant sequence spaces: a unified framework}, Vladikavkaz. Mat. Zh., to appear.

{ \bibitem{PSW}
B. de Pagter, F.~A. Sukochev, H. Witvliet,
{\it Double operator integrals},
J. Funct. Anal. {\bf 192} (2002), no.~1, 52--111.
}

%\bibitem{Parcet} 
%J. Parcet, {\it Pseudo-localization of singular integrals and noncommutative Calderón-Zygmund theory}, J. Funct. Anal. {\bf 256}:2 (2009), 509--593.

{ \bibitem{PS-2009}
D. Potapov, F.~A. Sukochev,
{\it Unbounded Fredholm modules and double operator integrals},
J. Reine Angew. Math. {\bf 626} (2009), 159--185.
}

{ \bibitem{Put}
C.~R. Putnam,
{\it On normal operators in Hilbert space},
Amer. J. Math. {\bf 73} (1951), no.~2, 357--372.
}

{
\bibitem{Shul1996}
V. Shulman,
{\it Some remarks on the Fuglede--Weiss theorem},
Bull. Lond. Math. Soc. {\bf 28} (1996), 385--392.
}

{
\bibitem{ShulmanTurowska2006}
V.~S. Shulman, L. Turowska,
{\it Operator synthesis. II: Individual synthesis and linear operator equations},
J. Reine Angew. Math. {\bf 590} (2006), 143--187.
}

{ \bibitem{ShulmanTurowska}
V.~S. Shulman, L. Turowska,
{\it An elementary approach to elementary operators on $B(H)$},
in {\it Elementary Operators and Their Applications},
Operator Theory: Advances and Applications, {\bf 212}. Birkhäuser, Basel, 2011, 115--132.
}

{ \bibitem{ST}
J.~G. Stampfli, J.~P. Williams,
{\it Growth conditions and the Fuglede--Putnam theorem},
Proc. Amer. Math. Soc. {\bf 20} (1969), 205--210.
}

\bibitem{STZ}
F.~A. Sukochev, K. Tulenov, D. Zanin,
{\it The optimal range of the Calderón operator and its applications}.
J. Funct. Anal. {\bf 277} (2019), no.~10, 3513--3559.

{\bibitem{STZ2}
F.~A. Sukochev, K. Tulenov, D. Zanin,
{\it The boundedness of the Hilbert transformation from one rearrangement invariant Banach space into another and applications},
Bull. Sci. Math. {\bf 167} (2021), 102943.
}

%{ \bibitem{SukochevZanin} Sukochev F., Zanin D. {\it Norm estimates for commutators of compact operators with bounded functions.} Adv. Math. {\bf 272} (2015), 370--417.}

%{ \bibitem{Voiculescu}
%D.-V.~Voiculescu,
%``Commutants mod normed ideals,''
%\emph{Journal of Functional Analysis}, vol.~262, no.~9, pp.~3702--3712, 2012.}

{ \bibitem{Weiss1978}
G. Weiss,
{\it The Fuglede commutativity theorem modulo the Hilbert--Schmidt class and generating functions for matrix operators. I},
Trans. Amer. Math. Soc. {\bf 246} (1978), 193--209.
}

{ \bibitem{Weiss1981}
G. Weiss,
{\it The Fuglede commutativity theorem modulo operator ideals},
Proc. Amer. Math. Soc. {\bf 83} (1981), no.~1, 113--118.
}

{ \bibitem{Xia2024}
J. Xia,
{\it Fuglede commutations modulo Lorentz ideals},
Trans. Amer. Math. Soc. {\bf 377} (2024), 249--269.
}


\end{thebibliography}
\end{document}